\title{Minimally critical regular endomorphisms of $\AA^N$}
\author{Patrick Ingram}
\address{York University, Toronto, Canada}
\renewcommand{\epsilon}{\varepsilon}
\newcommand{\PP}{\mathbb{P}}
\newcommand{\ZZ}{\mathbb{Z}}
\newcommand{\CC}{\mathbb{C}}
\newcommand{\RR}{\mathbb{R}}
\newcommand{\QQ}{\mathbb{Q}}
\renewcommand{\AA}{\mathbb{A}}
\newcommand{\PGL}{\operatorname{PGL}}
\newcommand{\SL}{\operatorname{SL}}
\newtheorem{theorem}{Theorem}
\newtheorem{lemma}[theorem]{Lemma}
\newtheorem{corollary}[theorem]{Corollary}
\newtheorem{prop}[theorem]{Proposition}
\newtheorem{conjecture}[theorem]{Conjecture}
\theoremstyle{definition}
\newtheorem{remark}[theorem]{Remark}
\newtheorem{question}[theorem]{Question}
\begin{document}
\maketitle

\begin{abstract} 
We study the dynamics of the map $f:\AA^N\to\AA^N$ defined by
\[f(\mathbf{X})=A\mathbf{X}^d+\mathbf{b},\]
for $A\in\SL_N$, $\mathbf{b}\in\AA^N$, and $d\geq 2$, a class which specializes to the unicritical polynomials when $N=1$. In the case $k=\CC$ we obtain lower bounds on the sum of Lyapunov exponents of $f$, and a statement which generalizes the compactness of the Mandelbrot set. Over $\overline{\QQ}$ we obtain estimates on the critical height of $f$, and over algebraically closed fields we obtain some rigidity results for post-critically finite morphisms of this form.
\end{abstract}

\section{Introduction}

The \emph{unicritical polynomials} $f(z)=z^d+c$ have been a test-bed in complex holomorphic dynamics, in part because much in dynamics is determined by the orbits of the critical points, and these polynomials have the fewest possible critical points.
Along similar lines, in studying \emph{regular polynomial endomorphisms} of $\CC^N$ (that is, polynomial maps which extend regularly to $\PP^N_\CC$) it makes sense again to consider those with the simplest possible critical locus, which in this case would consist of $N$ hyperplanes intersecting properly (ignoring multiplicity for now). After a suitable change of variables, such a map has the form \begin{equation}\label{eq:form}f(\mathbf{X})=A \mathbf{X}^d+\mathbf{b},\end{equation} with $A\in \SL_N(\CC)$ and $\mathbf{b}\in\CC^N$. We identify $f$ with its extension to $\PP^N_\CC$, and write $f|_H$ for the restriction of $f$ to the plane at infinity (which is the $d$th-power map followed by multiplication-by-$A$, a \emph{minimally critical} endomorphism of $\PP^{N-1}$ in the sense of~\cite{mincrit}).

Write $L(f)$ for the sum of Lyapunov exponents of $f$ with respect to its invariant measure. It follows from  general result of Bedford and Jonsson~\cite{bj} on regular polynomial endomorphisms that \[L(f)- L(f|_H)\geq\log d,\] and our first result is a similar lower bound which becomes arbitrarily large for certain parameters.

\begin{theorem}\label{th:proper}
For $f$ as in~\eqref{eq:form}, we have
\[L(f)- L(f|_H)\geq \frac{d-1}{d}\log^+\|\mathbf{b}\|+O_A(1)\]
and
\[L(f)- L(f|_H)\leq N(N+2)\log^+\|\mathbf{b}\|+O_A(1).\]
\end{theorem}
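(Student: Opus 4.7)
The plan is to start from the Bedford--Jonsson decomposition of $L(f)$ for a regular polynomial endomorphism as $L(f|_H) + \log d$ plus a contribution from the affine critical set. For our map, $\det A = 1$ gives $\det Df = d^N X_1^{d-1}\cdots X_N^{d-1}$, so the affine critical divisor is $(d-1)\sum_{i=1}^N H_i$ with $H_i = \{X_i = 0\}$. Unwinding the Bedford--Jonsson formula to accommodate this positive-dimensional critical locus, the identity should take the form
\[
L(f) - L(f|_H) = \log d + (d-1)\sum_{i=1}^N \int_{H_i} G_f \, T_f^{N-1},
\]
where $T_f = dd^c G_f$ and $G_f$ is the dynamical Green function; this generalizes the familiar identity $L(f) - L(f|_H) = \log d + (d-1) G_f(0)$ from the $N=1$ unicritical case.

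For the lower bound, the key input is the functional equation $G_f(\mathbf{x}) = d^{-1} G_f(f(\mathbf{x}))$ together with the universal estimate $G_f(\mathbf{y}) \geq \log^+\|\mathbf{y}\| - O_A(1)$. Since $f$ sends $\mathbf{x} \in H_i$ to $A\mathbf{x}^d + \mathbf{b}$, I would establish an averaging inequality
\[
\int_{H_i} \log\|A\mathbf{x}^d + \mathbf{b}\| \, T_f^{N-1}|_{H_i} \geq \log^+\|\mathbf{b}\| - O_A(1),
\]
using the rotational symmetry in each coordinate of $T_f^{N-1}|_{H_i}$ (inherited from the $S^1$-action on the leading homogeneous part) and a Jensen-type inequality. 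Dividing by $d$ and summing over $i$ with multiplicity $(d-1)$ then yields the lower bound $\frac{d-1}{d}\log^+\|\mathbf{b}\| + O_A(1)$.

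The upper bound follows from the complementary estimate $G_f(\mathbf{y}) \leq \log^+\|\mathbf{y}\| + O(1)$ together with the fact that $T_f^{N-1}$ is supported on the filled Julia set $K_f$. An elementary escape-rate argument using $\|A\mathbf{X}^d + \mathbf{b}\| \geq c_A \|\mathbf{X}\|^d - \|\mathbf{b}\|$ forces $K_f$ into a ball of radius polynomial in $\|\mathbf{b}\|$, and careful accounting through the sum over the $N$ critical hyperplanes with multiplicity $(d-1)$ yields the stated upper bound $N(N+2)\log^+\|\mathbf{b}\| + O_A(1)$. The main obstacle, I expect, is the averaging inequality underlying the lower bound: one must rule out pathological concentration of $T_f^{N-1}|_{H_i}$ near the zero locus of $A\mathbf{x}^d + \mathbf{b}$, which requires information about the restriction of $T_f^{N-1}$ to the critical hyperplanes that is not immediate from Bedford--Jonsson.
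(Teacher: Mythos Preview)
Your reduction to $\int G\,d\mu_C$ via Bedford--Jonsson and the decomposition of the affine critical divisor as $(d-1)\sum_i H_i$ are correct and match the paper's setup (see~\eqref{eq:bj} and~\eqref{eq:lyapdef}). But the lower-bound argument has a real gap. The estimate $G_f(\mathbf{y})\geq\log^+\|\mathbf{y}\|-O_A(1)$ is false: the telescoping bound gives $|G_f-\log^+\|\cdot\||\leq C_f$ with $C_f$ of order $\frac{1}{d-1}\log^+\|\mathbf{b}\|$, not $O_A(1)$. What is actually true is that $G_f(\mathbf{y})=\log\|\mathbf{y}\|+O_A(1)$ only once $\|\mathbf{y}\|$ exceeds an escape radius of order $(C_A\|\mathbf{b}\|)^{1/d}$. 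Your proposed rescue via rotational symmetry also fails: the slice $T_f^{N-1}|_{H_i}$ has no $(S^1)^N$-invariance when $\mathbf{b}\neq 0$, so there is no Jensen-type averaging available, and you have correctly identified this concentration problem as the obstacle without resolving it.

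The actual mechanism is a \emph{pointwise} lower bound on one well-chosen hyperplane, with no averaging needed. Since $f(\mathbf{x})=A(\mathbf{x}^d+A^{-1}\mathbf{b})$ and $(\mathbf{x}^d)_i=0$ on $H_i$, one has $\|f(\mathbf{x})\|\geq\|A^{-1}\|^{-1}\,|(A^{-1}\mathbf{b})_i|$ for every $\mathbf{x}\in H_i$; picking $i$ to maximise $|(A^{-1}\mathbf{b})_i|$ forces $\|f(\mathbf{x})\|\geq c_A\|\mathbf{b}\|$ uniformly on $H_i$, which for large $\|\mathbf{b}\|$ sits well inside the escape region. Then $G_f|_{H_i}=d^{-1}G_f\circ f|_{H_i}\geq d^{-1}\log\|\mathbf{b}\|-O_A(1)$ pointwise, and keeping only this single summand (the others being non-negative) yields the coefficient $\frac{d-1}{d}$. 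This is precisely the paper's argument, but the paper carries it out in divisor-theoretic rather than potential-theoretic language: it defines a relative escape rate $\Delta_f(D)=\lim d^{-kN}\lambda(f_*^kD)$ on divisors, proves a basin lemma (Lemma~\ref{lem:basin}) giving $\Delta_f(D)\geq\lambda(D)-O_A(\deg D)$ once a Newton-polygon invariant $\mu(D)$ exceeds $\log^+\|\mathbf{b}\|+O_A(1)$, and checks in Lemma~\ref{lem:localcritlower} that the hyperplane $B_i=L_*H_i$ with the right $i$ meets this threshold. The payoff is that these estimates hold over any complete valued field, so the same local computation also proves the arithmetic Theorem~\ref{th:main} and the rigidity Theorem~\ref{th:rigid}; your analytic route, even once repaired, is confined to~$\CC$.
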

Explicit error terms, which are continuous and plurisubharmonic on $\SL_N(\CC)$, are given in the proof. 

Note that Favre~\cite[Theorem~C]{favre} has characterized the variation of the Lyapunov exponent in a family of maps over a punctured disk, and from this one might deduce many examples of lower bounds on Lyapunov exponents in one-parameter families which become arbitrarily large as one approaches a boundary point. 

Before we continue,  note that  by~\cite[Theorem 3.2]{bj}) we have an equality,
\begin{equation}\label{eq:bj}L(f)-L(f|_H)=\log d + \int Gd\mu_C,\end{equation}
where $G$ is the Green's function for $f$,\ and \[\mu_C=\frac{1}{(2\pi)^N}dd^c\log|\det Df|\wedge (dd^cG)^{N-1}\] is the critical measure of $f$. The aforementioned bound of Bedford and Jonsson follows from the non-negativity of the integral. The locus of pairs $(A, \mathbf{b})$ where the integral in~\eqref{eq:bj} vanishes is a natural generalization of the Mandelbrot set, and contains the image of $\SL_N(\CC)$ by $A\mapsto (A, \mathbf{0})$. The following corollary, then, gives a sort of generalization of the compactness of the Mandelbrot set.
\begin{corollary}\label{cor:proper}
	Let $\mathcal{M}\subseteq\SL_N(\CC)\times \CC^N$ be the set of pairs $(A, \mathbf{b})$ for which 
	\[L(f)-L(f|_H)-\log d = \int Gd\mu_C=0,\]
	for $f(\mathbf{X})=A\mathbf{X}^d+\mathbf{b}$. Then the projection $\mathcal{M}\to\SL_N(\CC)$ is proper.
\end{corollary}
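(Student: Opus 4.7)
The plan is to show that for every compact $K \subset \SL_N(\CC)$, the preimage $\pi^{-1}(K) = \mathcal{M} \cap (K \times \CC^N)$ is bounded and closed in $\SL_N(\CC) \times \CC^N$, and therefore compact.

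For boundedness, note that $(A,\mathbf{b}) \in \mathcal{M}$ forces $L(f) - L(f|_H) = \log d$, so the lower bound of Theorem~\ref{th:proper} rearranges to
\[\log^+ \|\mathbf{b}\| \leq \frac{d}{d-1}\bigl(\log d - O_A(1)\bigr).\]
The implicit error term is continuous in $A$ (as emphasized in the remark immediately following the theorem), hence bounded on the compact set $K$, and one reads off a uniform upper bound on $\|\mathbf{b}\|$ for all $(A,\mathbf{b}) \in \pi^{-1}(K)$.

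For closedness, set $\psi(A,\mathbf{b}) := L(f) - L(f|_H) - \log d = \int G\,d\mu_C$; it suffices to prove that $\psi$ is continuous on $\SL_N(\CC) \times \CC^N$, since then $\mathcal{M} = \psi^{-1}(0)$ is automatically closed. This family is particularly amenable because the critical locus of $f(\mathbf{X}) = A\mathbf{X}^d + \mathbf{b}$ is the union of coordinate hyperplanes $\{x_k = 0\}$ for every $(A,\mathbf{b})$--indeed, $\det Df = d^N \prod_k x_k^{d-1}$ is independent of both $A$ and $\mathbf{b}$. Consequently $dd^c \log|\det Df|$ is a fixed current, and all parameter dependence of $\psi$ is funnelled through the Green's function and its wedge products. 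Continuity of $G$ and of the measures $(dd^c G)^{N-1}\wedge[x_k=0]$ in $(A,\mathbf{b})$ is then a standard consequence of the local uniform convergence of $d^{-n}\log^+\|f^n\|$ to $G$, a foundational feature of the Bedford--Jonsson framework for holomorphic families of regular polynomial endomorphisms.

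I expect the main point requiring care to be exactly this continuity of $\psi$, rather than mere upper semi-continuity: the Bedford--Jonsson identity manifestly displays $\psi$ as a non-negative plurisubharmonic function, but a priori the zero set of such a function need not be closed. The observation that the critical current is parameter-independent is the reason one can pass from plurisubharmonicity to full continuity in this particular setting, and with that in hand, the closed-and-bounded $\pi^{-1}(K)$ is compact, proving properness.
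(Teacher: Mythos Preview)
Your proposal is correct, and the boundedness half matches the paper exactly: both you and the paper rearrange the lower bound in Theorem~\ref{th:proper} (equivalently Lemma~\ref{lem:localcritlower}) to conclude that $\log^+\|\mathbf{b}\|$ is bounded uniformly over any compact set of matrices $A$.

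For closedness the two arguments diverge slightly. The paper works entirely in terms of its escape-rate function $\Delta_f(C_f)$, which by~\eqref{eq:lyapdef} coincides with your $\psi$, and observes that the telescoping estimates in Lemma~\ref{lem:basicdelta} already show that $d^{-kN}\lambda(f_*^k C_f)$ converges to $\Delta_f(C_f)$ \emph{locally uniformly} in $(A,\mathbf{b})$; since each approximant is visibly continuous (the coefficients of a defining form for $f_*^k C_f$ are polynomials in the entries of $A$ and $\mathbf{b}$), so is the limit. This is self-contained and avoids any pluripotential-theoretic input beyond what the paper has already built. Your route instead goes through the Bedford--Jonsson integral and exploits the pleasant accident that $\det Df = d^N\prod_k x_k^{d-1}$ is parameter-independent, reducing the question to continuity of $G$ and of the slice measures $(dd^c G)^{N-1}\wedge [x_k=0]$; you then invoke standard results on holomorphic families of regular polynomial endomorphisms. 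The paper in fact acknowledges this alternative explicitly, citing Bassanelli--Berteloot~\cite{bb} and Dinh--Sibony~\cite{ds}. Your approach is perhaps more conceptual and highlights why this particular family is tractable, while the paper's approach has the virtue of being entirely internal to the estimates it has already proved.
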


In the arithmetic context, the \emph{critical height} of a morphism $f:\PP^N\to\PP^N$, denoted $\hat{h}_{\mathrm{crit}}(f)$, is the appropriate analogue of the sum of Lyapunov exponents. Extrapolating from a definition of Silverman~\cite[p.~101]{barbados}, we defined in \cite{pnheights} a critical height $\hat{h}_{\mathrm{crit}}(f)$ for an endomorphism $f:\PP^N\to\PP^N$ defined over $\overline{\QQ}$, with the property that PCF maps all have $\hat{h}_{\mathrm{crit}}(f)=0$. Specifically, we set
\[\hat{h}_{\mathrm{crit}}(f)=\hat{h}_f(C_f),\]
where $C_f$ is the critical locus of $f$, and $\hat{h}_f$ is the canonical height function associated to $f$ (constructed for subvarieties by Zhang~\cite{zhang}, but note that our canonical height for divisors here and in \cite{pnheights, mincrit} is Zhang's height times the degree of the divisor). Just as Silverman conjectured~\cite[p.~101]{barbados} in dimension 1 that the moduli height is an ample Weil height away from the Latt\`{e}s maps, confirmed in~\cite{hcrit}, it is natural to conjecture~\cite{pnheights} that $\hat{h}_{\mathrm{crit}}$ is an ample Weil height away from some proper, Zariski closed subset of moduli space. Theorem~\ref{th:main}, the arithmetic analogue of Theorem~\ref{th:proper}, proves this conjecture for fibres of the family~\eqref{eq:form} over $\SL_N$, with some uniformity as the fibre varies.

\begin{theorem}\label{th:main} 
	For $f:\PP^N\to\PP^N$ of the form~\eqref{eq:form} defined over $\overline{\QQ}$ (with $d\geq 2$), we have explicit constants $C_1$ and $C_2$, depending just on $N$ and $d$, such that
	\[ \hat{h}_{\mathrm{crit}}(f)- \hat{h}_{\mathrm{crit}}(f|_H)\geq\frac{d-1}{d}h(\mathbf{b})-\frac{N(dN+1)-1}{Nd}h(A)-C_1\]
	and
	\[ \hat{h}_{\mathrm{crit}}(f)- \hat{h}_{\mathrm{crit}}(f|_H)\leq N(N+2)h(\mathbf{b})+N(N+1)h(A)+C_2.\]
	In particular, the critical height is a moduli height for algebraic families in which $A$ is fixed. \end{theorem}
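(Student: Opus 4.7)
The plan is to parallel the proof of Theorem~\ref{th:proper} in the arithmetic setting, replacing the sum of Lyapunov exponents by the critical height and the Bedford--Jonsson formula~\eqref{eq:bj} by a place-by-place intersection-theoretic analogue. First I would compute the critical divisor of $f$ explicitly: since $\det A=1$ and the affine Jacobian of $f$ is $dA\cdot\operatorname{diag}(X_1^{d-1},\dots,X_N^{d-1})$, one gets $C_f=(d-1)(H_\infty+H_1+\cdots+H_N)$ in $\PP^N$, where $H_i=\{X_i=0\}$, and likewise $C_{f|_H}=(d-1)\sum_i(H_i\cap H_\infty)$ inside $H_\infty\cong\PP^{N-1}$. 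Since $H_\infty$ is totally invariant under $f$, one has $\hat{h}_f(H_\infty)=0$, and linearity of the Zhang-style canonical height on divisor classes (as set up in \cite{pnheights}) reduces matters to bounding
\[\hat{h}_{\mathrm{crit}}(f)-\hat{h}_{\mathrm{crit}}(f|_H)=(d-1)\sum_{i=1}^N\bigl[\hat{h}_f(H_i)-\hat{h}_{f|_H}(H_i\cap H_\infty)\bigr].\]

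Next I would decompose each difference $\hat{h}_f(H_i)-\hat{h}_{f|_H}(H_i\cap H_\infty)$ into a sum over places of a number field of definition. At each place $v$, this local term should identify with the integral of the local Green's function $G_v$ against the appropriate affine critical measure on $H_i$, up to a correction coming from the restriction to infinity. At archimedean places, this is exactly the content of Theorem~\ref{th:proper} applied embedding by embedding, yielding the $\frac{d-1}{d}\log^+\|\mathbf{b}\|_v$ lower bound and $N(N+2)\log^+\|\mathbf{b}\|_v$ upper bound with explicit errors depending on $\|A\|_v$. At non-archimedean places I expect the same structure, with the local Green's function and critical measure replaced by their Berkovich-analytic counterparts; the key observation is that every $H_i$ passes through the origin, whose $f$-orbit is $\mathbf{0},\mathbf{b},f(\mathbf{b}),\ldots$, so the local escape rate is controlled by the telescoping series $\sum_n d^{-n}\log^+\|f^n(\mathbf{0})\|_v$, producing the required $\tfrac{1}{d}\log^+\|\mathbf{b}\|_v$ contribution up to a geometric tail in $\|A\|_v$.

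Summing over places and applying the product formula then gives the two stated bounds, with $C_1$ and $C_2$ emerging as absolute constants depending only on $N$ and $d$. The main obstacle I foresee is tracking the exact coefficients in front of $h(A)$: the factor $\frac{N(dN+1)-1}{Nd}$ on the lower side and $N(N+1)$ on the upper side must come from counting how many copies of $A$ appear in the expansion $f^n(\mathbf{X})=A(A\mathbf{X}^d+\mathbf{b})^d+\cdots$, which requires careful combinatorial bookkeeping of both the polynomial degrees and the multiplicity with which $A$ enters each iterate; handling non-archimedean places where $A$ has bad reduction may require further care in normalizing the local heights. Once the two inequalities are established, the final moduli-height assertion is immediate: for an algebraic family with $A$ fixed, the bounds sandwich $\hat{h}_{\mathrm{crit}}(f)$ between positive multiples of $h(\mathbf{b})$ up to bounded error, so it is an ample Weil height on the parameter space.
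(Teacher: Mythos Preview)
Your overall architecture---writing $\hat{h}_{\mathrm{crit}}(f)-\hat{h}_{\mathrm{crit}}(f|_H)=\sum_v n_v\Delta_{f,v}(C_f)$ and then summing local inequalities---is exactly the paper's strategy, your computation of $C_f$ is correct, and the upper bound does follow from the crude local estimate~\eqref{eq:deltaineq} as you suggest. The problem is your proposed mechanism for the local \emph{lower} bound at non-archimedean places.

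You argue that since $\mathbf{0}\in H_i$ and the orbit of $\mathbf{0}$ is $\mathbf{0},\mathbf{b},f(\mathbf{b}),\dots$, the escape rate of the divisor $H_i$ is controlled by the point Green's function $G_f(\mathbf{0})=\sum_n d^{-n-1}\log^+\|f^{n}(\mathbf{b})\|_v$. But there is no inequality of the form $\Delta_f(H_i)\gtrsim G_f(\mathbf{0})$ once $N\geq 2$: a hypersurface can pass through a point very far from $H$ and still have arbitrarily small $\lambda$-value, because of cancellation in its defining form. Concretely, if $D=\{a_1X_1+\cdots+a_NX_N+a_{N+1}X_{N+1}=0\}$ contains $[p_1:\cdots:p_N:1]$, then $a_{N+1}=-\sum a_ip_i$, which gives an \emph{upper} bound $\lambda(D)\leq\log^+\|\mathbf{p}\|+\log^+|N|$ but no lower bound at all. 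So the fact that $f_*^k H_i$ contains $f^k(\mathbf{0})$ tells you nothing useful about $\lambda(f_*^k H_i)$, and your telescoping series computes the wrong object.

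What the paper actually does is more delicate. One has $f_*H_i=d^{N-1}B_i$ with $B_i$ cut out by the $i$th row of $L^{-1}$, namely $(A^{-1}_{i,1},\dots,A^{-1}_{i,N},-(A^{-1}\mathbf{b})_i)$; this explicit form shows that for at least one $i$ both $\lambda(B_i)$ \emph{and} a finer Newton-polygon invariant $\mu(B_i)$ are at least $\log\|\mathbf{b}\|-\xi(A)$. The real work (Lemmas~\ref{lem:tc}--\ref{lem:basin}) is proving that once $\mu(D)$ exceeds an explicit threshold depending on $\log^+\|\mathbf{b}\|$ and $\|A^{\pm1}\|$, the condition persists under $f_*$, and in that regime $\lambda(f_*D)\geq d^N\lambda(D)-O_A(\deg D)$. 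Iterating gives $\Delta_f(B_i)\geq\lambda(B_i)-O_A(1)$, hence the lower bound on $\Delta_f(C_f)$. The coefficient $\frac{N(dN+1)-1}{Nd}$ of $h(A)$ arises from the error terms in how $\mu$ transforms under $L_*$ and $\phi_*$, not from combinatorics of iterates as you anticipated.
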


Note that the error term in the lower bound comes from a slightly better error term involving both $h(A)$ and $h(A^{-1})$.

For our next statement, recall that an endomorphism $f:\PP^N\to\PP^N$ is \emph{post-critically finite} (PCF) if and only if the post-critical locus
\[P_f=\bigcup_{n\geq 1}f^n(C_f)\]
is algebraic, where $C_f$ is again defined by the vanishing of the determinant of the Jacobian of $f$.

\begin{corollary}\label{cor:height}
Fix $B\geq 0$. For $A\in\SL_N(\overline{\QQ})$ with $h(A)\leq B$, the set of $\mathbf{b}\in\overline{\QQ}^N$ for which~\eqref{eq:form} is PCF is a set of bounded height, with bound depending just on $d, N$, and $B$.
\end{corollary}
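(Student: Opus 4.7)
The plan is a short deduction from the lower bound in Theorem~\ref{th:main}, exploiting the vanishing of the critical height on PCF maps. The goal is to show that when $f$ is PCF, both $\hat{h}_{\mathrm{crit}}(f)$ and $\hat{h}_{\mathrm{crit}}(f|_H)$ vanish, so that Theorem~\ref{th:main} forces $h(\mathbf{b})$ to be bounded in terms of $h(A)\leq B$.

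The key observation to verify is that $f|_H$ is itself PCF whenever $f$ is. The map $f|_H\colon\PP^{N-1}\to\PP^{N-1}$ acts in homogeneous coordinates as $[X_1:\cdots:X_N]\mapsto A[X_1^d:\cdots:X_N^d]$, and its critical locus (where the Jacobian $dA\cdot\operatorname{diag}(X_j^{d-1})$ drops rank) is $\bigcup_j\{X_j=0\}\cap H$, a subset of $C_f\cap H$. Since $H$ is $f$-invariant, one has $(f|_H)^n(C_{f|_H})=f^n(C_{f|_H})\subseteq f^n(C_f)\subseteq P_f$ for all $n\geq 1$, so $P_{f|_H}\subseteq P_f$. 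Because $P_f$ is algebraic and $H$ is Noetherian, the ascending chain $\bigcup_{n=1}^{m}(f|_H)^n(C_{f|_H})$ stabilizes, so $P_{f|_H}$ is algebraic and $f|_H$ is PCF.

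With both critical heights equal to zero, the lower bound in Theorem~\ref{th:main} reduces to
\[
0 \geq \frac{d-1}{d}h(\mathbf{b}) - \frac{N(dN+1)-1}{Nd}h(A) - C_1,
\]
which rearranges to $h(\mathbf{b})\leq \tfrac{d}{d-1}\bigl(\tfrac{N(dN+1)-1}{Nd}B + C_1\bigr)$ whenever $h(A)\leq B$; since $C_1$ depends only on $d$ and $N$, this is precisely the uniform bound claimed. The hard work is really Theorem~\ref{th:main} itself; the present deduction amounts to no more than the restriction-to-$H$ observation together with the rearrangement above, so no serious obstacle arises.
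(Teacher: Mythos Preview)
Your deduction from Theorem~\ref{th:main} is correct and is exactly how the paper (implicitly) derives the corollary. One simplification: you do not need to show that $f|_H$ is PCF. Since $\hat{h}_{\mathrm{crit}}(f|_H)=\hat{h}_{f|_H}(C_{f|_H})$ is a canonical height of an effective divisor, it is automatically non-negative; combined with $\hat{h}_{\mathrm{crit}}(f)=0$ for PCF $f$, this already gives $\hat{h}_{\mathrm{crit}}(f)-\hat{h}_{\mathrm{crit}}(f|_H)\leq 0$, and your rearrangement goes through verbatim.

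This matters because your justification for the ascending chain stabilizing is not quite right as written. Noetherian spaces satisfy the descending chain condition on closed sets, not the ascending one, and the containment $P_{f|_H}\subseteq P_f$ is vacuous here since $H$ itself lies in $P_f$ (it is a totally invariant critical component). The intended conclusion \emph{is} true: using $f^{-1}(H)=H$ one checks $(f|_H)^n(\{X_j=0\}\cap H)=f^n(\{X_j=0\})\cap H$, and the hypersurfaces $f^n(\{X_j=0\})$ range over finitely many irreducible components of the algebraic set $P_f$. But since the whole step is unnecessary for the corollary, you can simply drop it.
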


 When $N=1$, McMullen~\cite[Theorem~2.2]{mcmullen} (building on work of Thurston) showed that the only non-isotrivial families of PCF rational functions are the flexible Latt\`{e}s examples. The next result gives a statement in this direction for families of the form~\eqref{eq:form}.

\begin{theorem}\label{th:rigid}
	Let $k$ be an algebraically closed field of characteristic $0$ or $p>d$. There is no algebraic family over $k$  	
	of PCF maps of the form~\eqref{eq:form} with $A$ constant, but $\mathbf{b}$ non-constant.
\end{theorem}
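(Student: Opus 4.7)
Suppose toward contradiction that such a family exists, parametrized by an irreducible algebraic variety $V$ over $k$ with $A\in\SL_N(k)$ fixed and $\mathbf{b}\colon V\to\AA^N$ non-constant. Restricting to an irreducible curve in $V$ along which $\mathbf{b}$ remains non-constant, and passing to the smooth projective completion of its normalisation, we may assume $V=T$ is a smooth projective curve and $\mathbf{b}\colon T\to\PP^N$ is a non-constant morphism. View the family as a single endomorphism $f$ of the form~\eqref{eq:form} defined over the function field $K=k(T)$.

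The first step is to show that $f/K$ is itself PCF. For each fixed $n$, the condition $\bigcup_{j\leq n}f^j(C_f)=\bigcup_{j\leq n+1}f^j(C_f)$ cuts out a closed subscheme $S_n\subset T$, and these form an increasing chain: if the equality holds at stage $n$ then $f^{n+1}(C_f)\subseteq\bigcup_{j\leq n}f^j(C_f)$, and applying $f$ gives the equality at stage $n+1$. By the Noetherian property of $T$ the chain stabilises, and since $\bigcup_n S_n$ contains the Zariski-dense subset $T(k)$ of fibrewise-PCF parameters, some $S_{n_0}=T$. In particular the post-critical sequence stabilises on the generic fibre, and $f/K$ is PCF.

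Next, apply the function-field analogue of Theorem~\ref{th:main} to $f$ over $K$. The proof of Theorem~\ref{th:main} proceeds place-by-place, and the same local estimates survive when $\overline{\QQ}$ is replaced by $K=k(T)$; the restriction $p>d$ ensures separability in the critical-height computations in positive characteristic. Since $A\in\SL_N(k)$ is constant we have $h_K(A)=0$, and since $f$ is PCF over $K$ (hence so is $f|_H$, obtained by intersecting the post-critical locus with the invariant hyperplane $H$), both $\hat{h}_{\mathrm{crit}}(f)$ and $\hat{h}_{\mathrm{crit}}(f|_H)$ vanish. The inequality then reads $h_K(\mathbf{b})\leq\tfrac{d}{d-1}C_1$ for a constant $C_1=C_1(N,d)$.

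To close the argument, pull back along a degree-$e$ cover $\pi\colon T'\to T$: the pulled-back family still has $A$ constant with every fibre PCF, and under the standard degree-of-map normalisation one has $h_{K'}(\mathbf{b}\circ\pi)=e\cdot h_K(\mathbf{b})$, while $C_1$ is unchanged. Letting $e\to\infty$ forces $h_K(\mathbf{b})=0$, contradicting the non-constancy of $\mathbf{b}$. The principal technical hurdle is the careful transcription of the local estimates underlying Theorem~\ref{th:main} into the function-field setting in characteristic $p>d$, verifying in particular that the constant $C_1$ remains absolute (independent of $T$) so that the base-change scaling argument bites.
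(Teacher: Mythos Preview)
Your approach and the paper's share the same engine: both feed the local lower bound of Lemma~\ref{lem:localcritlower} into a function-field height framework. The difference is packaging. The paper observes that every place of $K=k(V)$ is non-archimedean and not $p$-adic, so all the auxiliary constants $c_{9,v}$ vanish; since $A$ is constant, $\xi_v(A)=\lambda_v(A^{-1})=0$ as well. Hence Lemma~\ref{lem:localcritlower} (packaged as Proposition~\ref{prop:goodred}) gives $\Delta_{f,v}(C_f)\geq\frac{d-1}{d}\log^+\|\mathbf{b}\|_v$ \emph{with no error term} at every place, and PCF forces $\mathbf{b}$ to be integral everywhere, hence constant. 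In your language this is exactly the statement that $C_1=0$ over $k(T)$, which already finishes the argument at your step~3. Your step~2, spreading the PCF hypothesis to the generic fibre, is a point the paper leaves implicit, and is a welcome bit of extra care.

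The base-change trick in step~4, however, does not work as stated and is where your plan has a genuine gap. If you normalise heights so that $h_{K'}(\mathbf{b}\circ\pi)=e\cdot h_K(\mathbf{b})$ (i.e.\ un-normalised degree-counting), then $C_1=\sum_v n_v c_{9,v}$ scales by the same factor $e$, since each place of $K$ is replaced by its preimages with total weight $e$. Conversely, under any normalisation making $C_1$ base-change invariant, $h(\mathbf{b})$ is invariant too. You cannot have one side scale and the other stay fixed. The only reason the argument survives is that $C_1=0$ on the nose---which is precisely the paper's observation, and which makes the scaling trick unnecessary. So drop step~4 and instead check directly that the local constants in Lemma~\ref{lem:localcritlower} all vanish for places of $k(T)$; then your proof and the paper's coincide.
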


Compare with~\cite[Theorem~3]{mincrit}, which proves a similar result for a broader class of maps, but with restrictions on the degree. There are also cases in which we know that the induced family $f|_H(\mathbf{X})=A\mathbf{X}^d$, if PCF, must be constant (or at least isotrivial), and in those cases we get more out of Theorem~\ref{th:rigid}.
\begin{corollary}  On $\PP^N_\CC$ with $N=2$ or $d\geq N^2-N+1$, any algebraic family of PCF maps  of the form~\eqref{eq:form} is isotrivial.	
\end{corollary}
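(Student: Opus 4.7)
The plan is to combine an isotriviality result for the family of infinity restrictions $f_t|_H$ on $\PP^{N-1}$ with Theorem~\ref{th:rigid}. Given an algebraic family $f_t(\mathbf{X}) = A_t\mathbf{X}^d + \mathbf{b}_t$ of PCF maps on $\PP^N$, each restriction $f_t|_H(\mathbf{X}) = A_t\mathbf{X}^d$ is automatically PCF on $\PP^{N-1}$. For $N=2$ this gives a PCF family of degree-$d$ maps on $\PP^1$, and I would invoke McMullen's theorem~\cite{mcmullen}: any non-isotrivial PCF family on $\PP^1$ must be flexible Latt\`{e}s, but $A z^d$ has $A(0)$ and $A(\infty)$ as totally ramified (hence preperiodic) points, which Latt\`{e}s maps do not possess. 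For $d \geq N^2 - N + 1 = (N-1)^2 + (N-1) + 1$, the analogous isotriviality for $A_t\mathbf{X}^d$ on $\PP^{N-1}$ (of dimension $N-1$) should follow from the rigidity theorem for minimally critical endomorphisms in~\cite{mincrit}, the degree bound matching the dimension-$(N-1)$ instance of the threshold appearing there.

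Given isotriviality of $f_t|_H$, the next step is to transfer it to the full family while preserving the form~\eqref{eq:form}. The critical locus of each $A\mathbf{X}^d$ is the union of the coordinate hyperplanes in $\PP^{N-1}$ (since $\det D(A\mathbf{X}^d) = d^N \det(A) \prod X_i^{d-1}$), so any projective conjugation between two such maps preserves this union and is therefore induced by a monomial matrix. After a finite base change I would thus produce a family of monomial matrices $C_t \in \SL_N$ with $C_t^{-1} A_t C_t^{(d)} = A_0$ constant, where $C_t^{(d)}$ denotes the matrix with entries $(C_t)_{ij}^d$. Conjugating $f_t$ on $\AA^N$ by $\mathbf{X} \mapsto C_t \mathbf{X}$ then yields $\tilde f_t(\mathbf{X}) = A_0\mathbf{X}^d + C_t^{-1}\mathbf{b}_t$, still of the form~\eqref{eq:form} but with constant linear part; Theorem~\ref{th:rigid} forces $C_t^{-1}\mathbf{b}_t$ to be constant, and the original family is therefore isotrivial.

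The main obstacle is the first step, particularly the case $d \geq N^2 - N + 1$, where isotriviality of $A_t\mathbf{X}^d$ on $\PP^{N-1}$ must be imported from~\cite{mincrit} rather than proved from scratch here; once that ingredient is in hand, the monomial-conjugation reduction and the invocation of Theorem~\ref{th:rigid} are essentially formal.
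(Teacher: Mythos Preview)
Your proposal is correct and follows essentially the same route as the paper: restrict to $H$, use Thurston/McMullen for $N=2$ and \cite[Theorem~3]{mincrit} for $d\geq N^2-N+1$ to get isotriviality of $f_t|_H$, then conjugate so that $A$ is constant and invoke Theorem~\ref{th:rigid}. Your monomial-matrix description of the conjugation is in fact more explicit than the paper's ``extending this change of coordinates to $\PP^N$'', but the underlying argument is the same.
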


In the same cases, Corollary~\ref{cor:height} can also be improved.

	\begin{corollary}
	For fixed $d\geq N^2-N+1$ (or $d\geq 2$ if $N=2$), PCF maps $f:\PP^N\to\PP^N$ of the form~\eqref{eq:form} have conjugacy representatives contained in a set of bounded height.
	\end{corollary}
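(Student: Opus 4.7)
The strategy is to combine the arithmetic estimates of Theorem~\ref{th:main} with a bounded-moduli-height result for PCF minimally critical endomorphisms of $\PP^{N-1}$ from~\cite{mincrit}, which is available precisely in the hypothesized regime.

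Let $f$ be a PCF map of the form~\eqref{eq:form}. The hyperplane $H$ at infinity is $f$-invariant, and the restriction $f|_H$ is itself PCF, so $\hat h_{\operatorname{crit}}(f)=\hat h_{\operatorname{crit}}(f|_H)=0$. Substituting this into the lower bound of Theorem~\ref{th:main} gives
\[h(\mathbf{b}) \leq \frac{N(dN+1)-1}{N(d-1)}\, h(A) + \frac{d\,C_1}{d-1},\]
controlling $h(\mathbf{b})$ linearly by $h(A)$.

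Next, $f|_H\colon\PP^{N-1}\to\PP^{N-1}$ is a PCF minimally critical endomorphism of degree $d$. The hypothesis ($N=2$, or $d\geq N^2-N+1$) implies the corresponding degree condition $d\geq(N-1)^2-(N-1)+1$ on $\PP^{N-1}$ (trivially when $N=2$, since then $\PP^{N-1}=\PP^1$ where the claim is classical). I invoke the arithmetic rigidity statement from~\cite{mincrit}, which in this regime produces a $B\in\GL_N(\overline{\QQ})$ of height at most $C(d,N)$ such that the $B$-conjugate of $f|_H$ has bounded height in $\operatorname{End}_d(\PP^{N-1})$.

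Finally, extend $B$ to $\tilde B\in\PGL_{N+1}$ acting trivially on the homogenizing coordinate. Then in affine coordinates $\tilde B^{-1}f\tilde B$ is the polynomial map $\mathbf{X}\mapsto B^{-1}A(B\mathbf{X})^d+B^{-1}\mathbf{b}$; its top-degree piece is the bounded-height map from the previous paragraph, and the linear map $A\mapsto B^{-1}A(B\cdot)^d$ on coefficient space is invertible of bounded height (since $B$ is), so $h(A)$ is bounded as well. Combined with the first display, $h(\mathbf{b})$ is then bounded, and hence so is the height of $\tilde B^{-1}f\tilde B$ in $\operatorname{End}_d^N$, yielding the desired bounded-height $\PGL_{N+1}$-conjugacy representative of $f$.

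The main obstacle is the invocation of the arithmetic bounded-moduli-height statement from~\cite{mincrit} in the form needed here, providing both a bounded-height conjugate \emph{and} a bounded-height conjugator $B$; the latter is what permits the extraction of a bound on $h(A)$, and then on $h(\mathbf{b})$. Should the cited result supply only the former, a normalization argument on $\PGL_N$-moduli representatives along the lines of~\cite{barbados} would be needed to produce a bounded-height lift of the orbit to $\GL_N$.
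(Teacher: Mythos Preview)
Your approach has the right ingredients (Theorem~\ref{th:main} plus the bounded-height result for PCF minimally critical maps from~\cite{mincrit}), but you assemble them in the wrong order, and this creates a genuine gap.

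You apply Theorem~\ref{th:main} to the \emph{original} $f$ first, obtaining $h(\mathbf{b})\ll h(A)+1$, and only then try to bound $h(A)$ by conjugating. Your step~5 needs the conjugator $B$ itself to have bounded height, and you correctly flag this as the main obstacle---but it is fatal for the argument as written. If I start with a bounded-height PCF map $g(\mathbf{X})=A_0\mathbf{X}^d$ on $\PP^{N-1}$ and conjugate by a diagonal matrix $D$ of large height, the resulting $A$ has large height, and any conjugator taking $A\mathbf{X}^d$ back to a bounded-height form must undo $D$ and so has large height as well. Thus one cannot in general choose $B$ of bounded height, and your deduction ``so $h(A)$ is bounded'' fails. (Indeed, if it succeeded you would be proving the false statement that every PCF $f$ of the form~\eqref{eq:form} already has bounded height, not merely a conjugate of it.) Your suggested normalization workaround produces a bounded-height \emph{representative} $B\mathbf{X}^d$ of the conjugacy class, but does not bound the conjugator, so it does not rescue steps~4--5.

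The paper's proof avoids this entirely by reversing the order: first conjugate so that $f|_H$ becomes $A'\mathbf{X}^d$ with $h(A')$ bounded (using~\cite{mincrit}, or~\cite{bijl} with \cite[Lemma~15]{mincrit} and \cite[Lemma~6.32]{barbados} when $N=2$), and \emph{then} apply Theorem~\ref{th:main} to the conjugated map $f'$ to bound $h(\mathbf{b}')$. This uses only that the conjugate has bounded $h(A')$, never the height of the conjugator. One also needs the observation (implicit in the paper) that any $\PGL_N$-conjugacy taking $A\mathbf{X}^d$ to $A'\mathbf{X}^d$ is by a monomial matrix, so its extension to $\PP^N$ keeps $f'$ in the form~\eqref{eq:form} and Theorem~\ref{th:main} applies. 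Your expression $B^{-1}A(B\mathbf{X})^d+B^{-1}\mathbf{b}$ suggests you had not noticed this, since for general $B$ this map is not of the form~\eqref{eq:form} at all.
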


As alluded to above, this discussion fits into a larger framework. Let $\mathsf{M}_d^N$ be the moduli space of endomorphisms of $\PP^N$, and let $\mathsf{P}_d^N\subseteq \mathsf{M}_d^N$ be the space of regular polynomial endomorphisms, that is, elements of $\mathsf{M}_d^N$ with an invariant hyperplane. Restriction to the hyperplane gives a surjective morphism $\pi:\mathsf{P}_d^N\to \mathsf{M}_d^{N-1}$. Given that one always deserves a better understanding of $\mathsf{M}_d^{N-1}$ than one has of $\mathsf{M}_d^N$, it makes sense to approach $\mathsf{P}_d^N\subseteq \mathsf{M}_d^N$ by looking at what happens in fibres of the restriction map, and then thinking about how that relative behaviour varies as we vary the fibre.

In relation to the discussion of Silverman's Conjecture in~\cite{hcrit} and~\cite{pnheights}, note that on the relative moduli space $\mathsf{P}_d^N\to \mathsf{M}_d^{N-1}$ of regular polynomial endomorphisms, the function \[f\mapsto\hat{h}_{\mathrm{relcrit}}(f):= \hat{h}_{\mathrm{crit}}(f)-\hat{h}_{\mathrm{crit}}(f|_H)= \hat{h}_{\mathrm{crit}}(f)-\hat{h}_{\mathrm{crit}}(\pi(f))\] gives a non-negative function interacting nicely with iteration, \[\hat{h}_{\mathrm{relcrit}}(f^n)=n\hat{h}_{\mathrm{relcrit}}(f),\]
and vanishing precisely on those maps whose critical orbits are in some sense no more complex than they need be, given the behaviour at infinity. We offer a conjecture on this relative critical height which, while weaker than Silverman's, is perhaps also more approachable. A sufficiently precise version of this conjecture, combined with a version of Silverman's Conjecture in dimension one lower, ought to allow one to conclude Silverman's Conjecture for regular polynomial endomorphisms.

\begin{conjecture}[The relative critical height is a relative moduli height]\label{conj:relativehcrit}
For any ample Weil heights $h_{\mathsf{P}_d^N}$ and $h_{\mathsf{M}_d^{N-1}}$ on $\mathsf{P}_d^N$ and $\mathsf{M}_d^{N-1}$, we have
\[\hat{h}_{\mathrm{relcrit}}(f)\asymp  h_{\mathsf{P}_d^N}(f)+O(h_{\mathsf{M}_d^{N-1}}(f|_H)).\]
\end{conjecture}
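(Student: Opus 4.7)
The plan is to generalize Theorem~\ref{th:main} from the minimally critical fibres to arbitrary fibres of the restriction map $\pi:\mathsf{P}_d^N\to\mathsf{M}_d^{N-1}$, then to assemble the fibrewise estimates into a global one. Any regular polynomial endomorphism $f\in\mathsf{P}_d^N$ over $g=f|_H$ can be written, after $\PGL_N$-conjugation, in the form $f(\mathbf{X})=F(\mathbf{X})+L(\mathbf{X})$ with $F$ a fixed homogeneous lift of $g$ and $\deg L<d$. A first step is therefore to choose, in each fibre of $\pi$, a normal form in which the coefficients of $L$ furnish affine height coordinates on the fibre whose natural height is comparable to $h_{\mathsf{P}_d^N}(f)-O(h_{\mathsf{M}_d^{N-1}}(g))$.

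For the upper bound, I would adapt the local-escape analysis underlying Theorem~\ref{th:main}. One controls the difference $G_f-G_g\circ\rho$ (for $\rho$ an appropriate radial retraction away from $H$) by a telescoping sum whose summands at a place $v$ are bounded by $\log^+\|L\|_v$. The regularity of $f$, i.e.\ the non-vanishing of the resultant of $F$, supplies the uniform contraction rate needed to make the telescope converge, with the loss depending polynomially on the place-local data defining $g$, and so on $h_{\mathsf{M}_d^{N-1}}(g)$. Integrating against the critical measure and subtracting $\hat{h}_{\mathrm{crit}}(g)$ then bounds $\hat{h}_{\mathrm{relcrit}}(f)$ above by the fibre height plus an $O(h_{\mathsf{M}_d^{N-1}}(g))$ error, which matches the shape of the conjecture.

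The harder direction is the lower bound, and this is where I expect the real obstacle. In the minimally critical case $C_f$ is a union of coordinate hyperplanes through $\mathbf{0}$ and $f(\mathbf{0})=\mathbf{b}$, so $h(\mathbf{b})$ enters directly through the height of a point in the critical orbit. For general $f$ the scheme $C_f$ varies within a fixed relative Hilbert scheme over $\mathsf{M}_d^{N-1}$, and one wants to bound $\hat{h}_f(C_f)-\hat{h}_g(C_g)$ from below by an ample Weil height on this relative Hilbert scheme, for instance through a relative Zhang-type inequality combined with an arithmetic equidistribution statement for the critical measures $\mu_{C,f}$ in a family.

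The main obstacle is the uniformity of this lower bound across varying fibres: for each fixed $g$ one can extract a Northcott-style lower bound by specialization, but stitching these together with an error that is genuinely linear in $h_{\mathsf{M}_d^{N-1}}(g)$ seems to require substantive new input, perhaps a higher-dimensional analogue of the resolution of Silverman's conjecture in \cite{hcrit}, or a sufficiently detailed understanding of the GIT quotient $\mathsf{P}_d^N\to\mathsf{M}_d^{N-1}$ and its behaviour at the boundary of $\mathsf{M}_d^{N-1}$. Until such a mechanism is available, the conjecture seems genuinely out of reach of the methods of the present paper.
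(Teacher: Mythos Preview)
The statement is labelled a \emph{conjecture} in the paper, and the paper contains no proof of it; it only proves the special case of maps of the form~\eqref{eq:form} via Theorem~\ref{th:main}, and explicitly describes Conjecture~\ref{conj:relativehcrit} as open. Your proposal is likewise not a proof but a research outline, and your closing assessment---that the conjecture is out of reach of the methods of the present paper---is exactly the paper's own position. So there is no proof to compare against, and on the essential point you and the paper agree.

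That said, your sketch of what a proof would require is reasonable: a fibrewise normal form, an upper bound via telescoping local escape estimates (paralleling Lemmas~\ref{lem:basicdelta} and~\ref{lem:trivial}), and a lower bound requiring some genuinely new ingredient to control the critical height uniformly as the base point in $\mathsf{M}_d^{N-1}$ varies. The obstacle you isolate---that the lower bound in each fibre can be obtained by specialization, but gluing these with error linear in $h_{\mathsf{M}_d^{N-1}}(g)$ demands something like a higher-dimensional analogue of~\cite{hcrit}---is precisely the difficulty the paper gestures at when it calls the conjecture ``perhaps also more approachable'' than Silverman's but does not attempt it.
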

Theorem~\ref{th:main} gives a result in this direction for morphisms of a certain form.
 Note also that a case of Conjecture~\ref{conj:relativehcrit} appears to follow from the results in~\cite{pcfpn}, but unfortunately with an incompatible definition of the critical height. It remains to be seen if those results can be translated into the terms of the present article.

We conclude with a few questions about the relative moduli space $\pi:\mathsf{P}_d^N\to \mathsf{M}_d^{N-1}$, generalizing the results above.

\begin{question}\label{conj:proper}
Let $\pi:\mathsf{P}_d^N\to \mathsf{M}_d^{N-1}$ be relative moduli space of regular polynomial endomorphisms, and let $\mathcal{M}\subseteq \mathsf{P}_d^N$ be the locus where $L(f)=L(f|_H)+\log d$. Is  $\pi:\mathcal{M}\to \mathsf{M}_d^{N-1}$  proper?\end{question}

An affirmative answer to Question~\ref{conj:proper} would immediately answer the complex case of the following question on PCF maps (which Theorem~\ref{th:rigid} answers negatively for maps of the form~\eqref{eq:form}).

\begin{question}[Relative rigidity]\label{conj:relrig} Let $k$ be an algebraically closed field of characteristic 0 or $p>d$. Can there be an algebraic curve in the 
 PCF locus in $\mathsf{P}_d^{N}$ over $k$ which is contained in a fibre of the projection $\pi:\mathsf{P}_d^{N}\to \mathsf{M}_d^{N-1}$?\end{question}

Of course, if $\Gamma\subseteq \mathsf{P}_d^{N}$ is an algebraic curve in the PCF locus, not contained in a fibre of $\pi$, then $\pi(\Gamma)\subseteq \mathsf{M}_d^{N-1}$ is an algebraic curve of PCF maps. In the case $N=2$, then, a negative answer to Question~\ref{conj:relrig} would imply that any nontrivial algebraic one-parameter family of PCF regular polynomial endomorphisms of $\PP^2$ restricts to the line at infinity as a flexible Latt\`{e}s family. The apparent rarity of PCF maps in several variables (see, e.g., \cite{pcfzar}), in fact, makes it likely that a stronger statement is true.

It is somewhat illustrative to consider Question~\ref{conj:relrig} in the case $N=1$. Note that $\PP^0$ is a single point, and over an algebraically closed field so is $\mathsf{M}_d^0$ for each $d\geq 2$. In particular, any curve in the PCF locus of $\mathsf{P}_d^1$ is contained in the unique fibre of the map $\mathsf{P}_d^{1}\to \mathsf{M}_d^{0}$, and so the expected negative answer to Question~\ref{conj:relrig}  simply asserts that there are no non-isotrivial families of PCF polynomials in one variable.

Similarly, the unique (up to conjugacy) endomorphism of $\PP^0$ of degree $d$ has critical height 0, and so Conjecture~\ref{conj:relativehcrit} above asserts that the critical height is a moduli height for polynomials of one variable, which is also true~\cite{pcfpoly}.

Before proceeding, we briefly mention the connection between this paper and~\cite{mincrit}. In~\cite{mincrit} we show that the critical height is a moduli height for maps of the form $f(\mathbf{X})=A\mathbf{X}^d$ in projective coordinates (one $d$ is sufficiently large compared to the dimension), and in the present paper we consider the subclass of these maps fixing one of the ramified hyperplanes. The main novelty in this special case is that we are able to conclude local results (i.e., over $\CC$ and $\CC_p$) which eluded us in~\cite{mincrit}. At the same time, the results of Theorem~\ref{th:main} and~\ref{th:rigid} have the benefit of applying to all degrees, but the drawback of depending on the behaviour at infinity, while the results in~\cite{mincrit} were absolute. These relative results can of course be combined with the results of~\cite{mincrit} applied to the map restricted to the invariant hyperplane, and we have demonstrated that in various places. In general, the estimate needed to deduce the results in this note are somewhat more delicate than those in~\cite{mincrit}, and have at least the potential to be extended to regular polynomial endomorphisms in general.

In Section~\ref{sec:pushpull} we work over an algebraically closed field, complete with respect to some absolute value, and prove most of the technical lemmas. Section~\ref{sec:relesc} introduces a ``relative rate of escape'' for a hypersurface under a map of the form~\eqref{eq:form}, which we then use to prove Theorem~\ref{th:proper} and Corollary~\ref{cor:proper}, as well as a statement of good reduction. Section~\ref{sec:global} contains the proofs of  Theorem~\ref{th:main} and Corollary~\ref{cor:height}. Finally, in Section~\ref{sec:p2} we delve deeper into the case $N=2$, boot-strapping some results from what is known about critical dynamics in one variable.

\section{Estimates on pulling-back and pushing-forward}\label{sec:pushpull}

Let $K$ be an algebraically closed field, complete with respect to some absolute value $|\cdot|$. We write $\|x_1, ... ,x_n\|=\max\{|x_1|, ..., |x_n|\}$, and $\log^+ x = \max\{\log x, 0\}$. Note that the triangle and ultrametric inequalities combine to give the following estimate, of which we make liberal use:
\[\log|x_1+\cdots +x_n|\leq \log\|x_1, ..., x_n\|+\log^+|n|.\]
Note that an absolute value is non-archimedean precisely if $\log^+|n|=0$ for all $n\in \ZZ$. To avoid unnecessary case distinctions in several places,  we adopt the conventions that $\log 0 = -\infty$, and that $-\infty < x< \infty$  and $\infty + x = \infty$ for all real numbers $x$.

Before continuing, we comment on various numbered constants $c_i$ that appear in the arguments below. Throughout, these constants have always been chosen to be non-negative, to simplify the manipulation of inequalities (sometimes at the cost of optimal bounds). Moreover, the constants $c_i$ depend only on $d$ (the degree of the endomorphisms $f$ under consideration) and $N$ (the dimension of the ambient space). Finally, these constants will all have value 0, except in the case of archimedean absolute values, or $p$-adic absolute values for $p\leq d$.

Given a homogeneous form $F$ with coefficients in $K$, we set $\|F\|$  to be the largest absolute value of a coefficient of $F$, in other words the  Gau\ss\ norm when $|\cdot|$ is non-archimedean. In~\cite{pnheights} we used the Mahler measure at the archimedean places, which is more natural, but turns out to be less convenient for the estimates in this note, which follows~\cite{mincrit} closely.

\begin{lemma}\label{lem:mult}
For $1\leq i\leq n$, let $F_i$ be a homogeneous form in $N+1$ variables. Then
\[-2N\sum_{i=1}^n\deg(F_i)\log^+|2|\leq \log\left\|\prod_{i=1}^{n}F_i\right\|-\sum_{i=1}^{n}\log\|F_{i}\|\leq 2N\sum_{i=1}^n\deg(F_i)\log^+|2|.\]

Let $F_{i, j}$ be homogeneous forms in $N+1$ variables such that, for each $1\leq i\leq n$, the form $\prod_{j=1}^{m_i}F_{i, j}$ has degree $\delta$. Then
\begin{equation}\label{eq:sumprod}\log\left\|\sum_{i=1}^n \prod_j^{m_i} F_{i, j}\right\|\leq\max_{1\leq i\leq n}\sum_{j=1}^{m_i}\log\|F_{i, j}\|+\log^+|n|+ 2N\delta \log^+|2|.\end{equation}
\end{lemma}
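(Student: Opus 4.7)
My plan is to reduce both bounds to the multiplicativity of a ``nicer'' norm on homogeneous forms, and then compare that norm with the coefficient $\ell^\infty$ norm $\|\cdot\|$ used in the statement. In the non-archimedean case there is nothing to do: the Gauss norm is multiplicative by Gauss's lemma, so $\log\bigl\|\prod_i F_i\bigr\| = \sum_i \log\|F_i\|$ exactly, and since $\log^+|2|=0$ here the displayed bounds are trivially satisfied. I would therefore focus on the archimedean case.

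At an archimedean place I would use the Mahler measure $M(F) = \exp\int_{(S^1)^{N+1}}\log|F|$, which is multiplicative: $M(FG)=M(F)M(G)$. The upper bound $\log M(F)\leq \log\|F\|+\log\binom{N+\deg F}{N}$ comes from Jensen's inequality combined with $|F|\leq\|F\|_1$ on the polytorus, while the lower bound $\log\|F\|\leq N\deg(F)\log 2+\log M(F)$ comes from iterating the standard one-variable Mahler coefficient bound $\binom{d}{k}\leq 2^d$ across the $N$ dehomogenized variables. Absorbing the binomial overhead into a slightly more generous exponential factor gives a symmetric two-sided bound $\bigl|\log\|F\|-\log M(F)\bigr|\leq N\deg(F)\log 2$ (the extra slack being exactly why the stated constant is $2N$ rather than $N$). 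Writing
\[\log\bigl\|\prod_i F_i\bigr\|-\sum_i\log\|F_i\| = \Bigl(\log\bigl\|\prod_i F_i\bigr\|-\log M\bigl(\prod_i F_i\bigr)\Bigr)-\sum_i\bigl(\log\|F_i\|-\log M(F_i)\bigr)\]
and applying the triangle inequality together with multiplicativity of $M$ then yields the claimed bound $2N\sum_i\deg(F_i)\log 2 = 2N\sum_i\deg(F_i)\log^+|2|$ on both sides.

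For the sum-product estimate~\eqref{eq:sumprod}, I would apply the elementary bound $\|\sum_i G_i\| \leq n\max_i \|G_i\|$ in the archimedean case (and trivially without the factor of $n$ otherwise), yielding $\log\|\sum_i G_i\| \leq \max_i\log\|G_i\| + \log^+|n|$. Setting $G_i=\prod_j F_{i,j}$ (each of degree $\delta$ by hypothesis) and applying the first part of the lemma to bound $\log\|G_i\|\leq\sum_j\log\|F_{i,j}\|+2N\delta\log^+|2|$ then immediately produces the claim. The only step requiring any care is tuning the comparison constants between $M(\cdot)$ and $\|\cdot\|$ so that the final error lands in the clean shape $2N\deg(F)\log^+|2|$ with no leftover additive term; beyond that, the entire lemma is formal bookkeeping from multiplicativity of $M$ and elementary triangle/ultrametric inequalities.
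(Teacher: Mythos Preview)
Your proposal is correct and follows essentially the same route as the paper: Gauss's lemma in the non-archimedean case, and in the archimedean case the multiplicativity of the Mahler measure together with the two-sided comparison $|\log\|F\|-m(F)|\leq N\deg(F)\log 2$ (the paper uses the slightly asymmetric form $m(F)-\tfrac{N}{2}\log(\deg(F)+1)\leq\log\|F\|\leq m(F)+N\deg(F)\log 2$ and absorbs the logarithmic term via $\log(1+x)\leq x$ and $\tfrac12\leq\log 2$). For~\eqref{eq:sumprod} the paper re-runs the Mahler estimate after the triangle inequality rather than invoking the first part as you do, but the two arguments are equivalent.
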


\begin{proof}
If $|\cdot|$ is non-archimedean, then these claims follow from the Gau\ss\ Lemma and the ultrametric inequality (and note that the error terms containing $\log^+|m|$, for $m$ an integer, vanish).

In the archimedean case, we recall the logarithmic Mahler measure of $F$, defined as
\[m(F)=\int\log|F|d\mu,\]
where $\mu$ is the usual normalized Haar measure on the unit circle in each variable. On the one hand, it is clear from the definition that $m(FG)=m(F)+m(G)$. On the other, it turns out that the Mahler measure is not too different from $\log\|F\|$, as pointed out by Mahler~\cite{mahler}. Specifically,
\begin{equation}\label{eq:mahler}m(F)-\frac{N}{2}\log(\deg(F)+1)\leq \log\|F\|\leq m(F)+N\deg(F)\log 2.\end{equation}
Notice
\begin{eqnarray*}
	\log\left\|\prod_{i=1}^{n}F_i\right\|&\leq & m\left(\prod_{i=1}^{n}F_i\right)+N\deg\left(\prod_{i=1}^nF_i\right)\log 2\\
	&=&\sum_{i=1}^nm\left(F_i\right)+N\sum_{i=1}^n\deg\left(F_i\right)\log 2\\
	&\leq & \sum_{i=1}^n \log\|F_i\|+N\sum_{i=1}^n\left(\frac{1}{2}\log(\deg(F_i)+1)+\deg\left(F_i\right)\log 2\right)\\
		&\leq & \sum_{i=1}^n \log\|F_i\|+2N\log 2\left(\sum_{i=1}^n\deg(F_i)\right)
\end{eqnarray*}
using the estimates $\log(1+x)\leq x$, for $x\geq 0$, and $\frac{1}{2}\leq \log 2$.

The inequality in the other direction is derived similarly.

%Similarly,
%\begin{eqnarray*}
%	\sum_{i=1}^n\log\left\|F_i\right\|&\leq & \sum_{i=1}^nm\left(F_i\right)+N\sum_{i=1}^n\deg(F_i)\log 2\\
%	&=&m\left(\prod_{i=1}^nF_i\right)+N\sum_{i=1}^n\deg(F_i)\log 2\\
%&\leq & \log\left\|\prod_{i=1}^nF_i\right\| + \frac{N}{2}\log\left(\sum_{i=1}^n\deg(F_i)+1\right) + N\sum_{i=1}^n\deg(F_i)\log 2\\
%&\leq & \log\left\|\prod_{i=1}^nF_i\right\|+2N\log 2\left(\sum_{i=1}^n\deg(F_i)\right).
%\end{eqnarray*}

For~\eqref{eq:sumprod}, by the triangle inequality,
\begin{eqnarray*}
	\log\left\|\sum_{i=1}^n \prod_j^{m_i} F_{i, j}\right\|&\leq & \max_{1\leq i\leq n}\log\left\|\prod_j^{m_i} F_{i, j}\right\|+\log n\\
	&\leq & \max_{1\leq i\leq n}m\left(\prod_j^{m_i} F_{i, j}\right)+N\delta\log 2+\log n\\
	&=&\max_{1\leq i\leq n}\sum_j^{m_i}m\left( F_{i, j}\right)+N\delta\log 2+\log n\\
	&\leq & \max_{1\leq i\leq n}\sum_j^{m_i}\log\left\| F_{i, j}\right\|+\max_{1\leq i\leq n}\sum_j^{m_i}\frac{N}{2}\log(\deg(F_{i, j})+1)\\&&+N\delta\log 2+\log n,
\end{eqnarray*}
which gives the desired bound again using $\log(1+x)\leq x$ and $\frac{1}{2}\leq \log 2$.
\end{proof}

Let $H$ denote the hyperplane of $\PP^N$ defined by $X_{N+1}=0$, and for any effective divisor $D$ on $\PP^N$ intersecting $H$ properly, and defined by $F=0$,  set
\[\lambda(D)=\log\|F(X_1, ..., X_{N+1})\|-\log\|F(X_1, ..., X_N, 0)\|.\]
This definition does not depend on the choice of homogeneous form $F$ representing $D$, and  $\lambda(D)\geq 0$. For some intuition, observe that that on $\PP^1$, we have
\[\lambda([z])=\log^+|z|\]
for the divisor $[z]$ corresponding to the point $z\in \PP^1_K\setminus\{\infty\}$.
We will also define, for a divisor $D$ defined by the homogenous form $F(\mathbf{X})=\sum_{i=0}^{\deg(F)} X_{N+1}^kF_k(X_1, ..., X_N)$ the quantity
\[\mu(D)=\min_{0\leq k<\deg(F)}\frac{\log|F_{\deg(D)}|-\log\|F_k\|}{\deg(D)-k},\]
provided that $D$ does not contain $H$ or $(0, 0, ..., 1)$, in which case we have $F_{\deg(D)}=F(0, 0, ..., 1)\neq 0$.
Note that it follows immediately from the definitions that
\[\mu(D)\leq \frac{\lambda(D)}{\deg(D)},\]
but there is no bound in the other direction.

\begin{remark}  
Although it is most efficient and transparent here to work in terms of homogeneous forms, it is worth noting that what we are doing fits into the framework of the geometry of arithmetic varieties as studied in arithmetic intersection theory. More concretely, if the absolute value on $K$ is non-archimedean, then $K$ has a ring of integers $\mathcal{O}\subseteq K$, and the morphism $f:\PP^N_K\to\PP^N_K$ extends to a rational map of schemes $f:\PP^N_\mathcal{O}\dashrightarrow 	\PP^N_\mathcal{O}$. The homogeneous  form $F(\mathbf{X})\in \mathcal{O}[X_1, ..., X_{N+1}]$ now defines an effective divisor $\PP^N_\mathcal{O}$, specifically $\overline{D}-\log_v\|F\|\PP_{k}^N$, where $\overline{D}$ is the Zariski closure of the divisor defined by $F$ on the generic fibre, $\PP_{k}^N$ is the special fibre, and $\log_v$ is normalized so that $\log_v|\pi|=-1$ for any uniformizer $\pi$ of the maximal ideal of $\mathcal{O}$. Our estimates on how $\log\|F\|$ changes under pulling-back by (some model of) $f$ now correspond to estimates on the difference between $f^*\overline{D}$ and $\overline{f^* D}$, for divisors $D$ on the generic fibre (but there appears to be no simpler way of making these estimates than to reduce things to computations involving homogeneous forms).
 Our estimates on pushing-forward are somewhat more fraught in this context, since $f:\PP^N_\mathcal{O}\dashrightarrow 	\PP^N_\mathcal{O}$ is generally not a morphism, but proceeding as in~\cite{pnheights} we may work with integral models, and recover something similar. All of these subtleties are eliminated by taking this more elementary approach.
\end{remark}

\begin{lemma}\label{lem:mulambda}
For effective divisors $D_i$, $1\leq i\leq n$, not containing $H$ we have\begin{multline}\label{eq:lambdaineq}
-4N\sum_{i=1}^n\deg(D_i)\log^+|2|\\\leq \lambda\left(\sum_{i=1}^n D_i\right)- \sum_{i=1}^n\lambda(D_i)\\\leq 4N\sum_{i=1}^n\deg(D_i)\log^+|2|,	
\end{multline}
and if the $D_i$ do not contain the origin, we also have
\begin{equation}\label{eq:muineq}\mu\left(\sum_{i=1}^n D_i\right)\geq\min_{1\leq i\leq n}\mu(D_i)-2N\log^+|2|-(n-1)\log^+\left|\sum_{i=1}^n\deg(D_i)\right|.\end{equation}
Finally, if $\mu(D)\geq 0$, then
\[\lambda(D)=\log|F_{\deg(D)}|-\log\|F_0\|\]
for any form $F(\mathbf{X})=\sum_{i=0}^{\deg(F)} X_{N+1}^iF_i(X_1, ..., X_N)$ defining $D$.
\end{lemma}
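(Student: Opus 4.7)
The plan is to fix, for each $i$, a defining form $F_i(\mathbf{X})=\sum_{k=0}^{\deg F_i} X_{N+1}^k F_{i,k}(X_1,\ldots,X_N)$ of $D_i$ and to work with $G=\prod_i F_i$, which defines $\sum_i D_i$ and satisfies $G|_{X_{N+1}=0}=\prod_i F_{i,0}$ (because $D_i\not\supseteq H$ forces $F_{i,0}\neq 0$ and $\deg F_{i,0}=\deg F_i$). The closing assertion I would dispose of first: if $\mu(D)\geq 0$ then $\log\|F_k\|\leq \log|F_{\deg F}|$ for every $k<\deg F$, so $\|F\|=\max_k\|F_k\|=|F_{\deg F}|$, and the claimed identity $\lambda(D)=\log|F_{\deg F}|-\log\|F_0\|$ falls out of the definition of $\lambda$. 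For~\eqref{eq:lambdaineq} I would apply the product estimate of Lemma~\ref{lem:mult} twice, once to $G=\prod F_i$ and once to $G|_{X_{N+1}=0}=\prod F_{i,0}$, noting the factors have identical degrees $\deg F_i=\deg D_i$; taking the difference of the two resulting approximations, $\sum_i\lambda(D_i)$ appears with combined error at most $4N\sum_i\deg(D_i)\log^+|2|$, which is exactly~\eqref{eq:lambdaineq}.

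The $\mu$-inequality~\eqref{eq:muineq} is the main point. I would expand $G=\sum_k X_{N+1}^k G_k$ where
\[
G_k=\sum_{k_1+\cdots+k_n=k}\prod_{i=1}^n F_{i,k_i},\qquad G_{\deg G}=\prod_{i=1}^n F_{i,\deg F_i}.
\]
The summands in $G_k$ all have common degree $\deg G-k$, so~\eqref{eq:sumprod} bounds $\log\|G_k\|$ from above by $\max_{k_1+\cdots+k_n=k}\sum_i\log\|F_{i,k_i}\|+\log^+|N_{\mathrm{comp}}|+2N(\deg G-k)\log^+|2|$, where $N_{\mathrm{comp}}$ is the number of compositions of $k$ into $n$ nonnegative parts. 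Subtracting from $\log|G_{\deg G}|=\sum_i\log|F_{i,\deg F_i}|$ and applying the defining inequality $\log|F_{i,\deg F_i}|-\log\|F_{i,k_i}\|\geq\mu(D_i)(\deg F_i-k_i)$ coordinatewise (valid also when $k_i=\deg F_i$, where both sides vanish) turns the max into a sum, producing a lower bound on $\log|G_{\deg G}|-\log\|G_k\|$. I would then replace each $\mu(D_i)$ by $\mu^*:=\min_j\mu(D_j)$; since $\deg F_i-k_i\geq 0$ this is a valid term-by-term weakening regardless of the sign of $\mu^*$. Dividing through by $\deg G-k\geq 1$, using $N_{\mathrm{comp}}\leq(\deg G)^{n-1}$ to obtain $\log^+|N_{\mathrm{comp}}|\leq(n-1)\log^+|\sum_i\deg(D_i)|$ (trivially $0$ non-archimedeanly, since $N_{\mathrm{comp}}\in\ZZ$), and finally taking the minimum over $0\leq k<\deg G$ yields~\eqref{eq:muineq}.

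The main obstacle I foresee is really just careful bookkeeping of the combinatorial factor $N_{\mathrm{comp}}$, along with the sign-sensitive verification that weakening $\mu(D_i)$ to $\mu^*$ survives multiplication by the nonnegative $\deg F_i-k_i$ uniformly in all cases; the remainder is a straightforward assembly from Lemma~\ref{lem:mult}.
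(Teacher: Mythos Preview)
Your proposal is correct and follows essentially the same route as the paper's proof: both derive~\eqref{eq:lambdaineq} by applying Lemma~\ref{lem:mult} to the product and its restriction, both expand $G_k$ as a sum over compositions and bound it via~\eqref{eq:sumprod}, and both dispatch the final claim by noting $\mu(D)\geq 0$ forces $\|F\|=|F_{\deg F}|$. The only cosmetic difference is that the paper bounds the number of compositions of $k$ by $(k+1)^{n-1}$ rather than your coarser $(\deg G)^{n-1}$, but after dividing by $\deg G-k$ and taking the worst case $k=\deg G-1$ both yield the same constant.
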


\begin{proof}
The claim~\eqref{eq:lambdaineq} follows immediately from Lemma~\ref{lem:mult}.

For~\eqref{eq:muineq} in the non-archimedean case, the proof is similar to that of the Gau\ss\ Lemma. 

Specifically, let $D_i$ be defined by $F_i=0$, with
\[F_i(\mathbf{X})=\sum_{j=0}^{\deg(F_i)}X_{N+1}^jF_{i, j}(X_1, ..., X_N),\]
 and choose $k_i$ minimally so that
\[\mu(D_i)=\frac{\log|F_{i, \deg(F_i)}|-\log\|F_{i, k_i}\|}{\deg(F_i)-k_i}.\]
Now, for $\delta=\sum_{i=1}^n\deg(D_i)$ we have $\sum_{i=1}^n D_i$  defined by the vanishing of
\[\sum_{\ell = 0}^{\delta}X_{N+1}^\ell G_{\ell}\] where \[G_\ell= \sum_{j_1+\cdots +j_n=\ell}\prod_{i=1}^nF_{i, j_i}.\]
Note that the number of summands is the number of solutions to $j_1+\cdots + j_n=\ell$ satisfying $0\leq j_i\leq \deg(F_i)$ for all $0\leq i \leq n$, which we crudely estimate as at most $(\ell+1)^{n-1}$.
% at most the number of ways of writing $\ell$ as a sum of $n$ non-negative integers, which is $\binom{\ell+n-1}{n-1}$.
 Then we have by Lemma~\ref{lem:mult}
\begin{multline*}
\log\left\|G_\ell\right\|	\leq\max_{j_1+\cdots +j_n=\ell}\sum_{i=1}^{n}\left(\log\|F_{i, j_i}\|+2N\left(\deg(D_i)-j_i\right)\log^+|2|\right)\\+(n-1)\log^+|\ell+1|.	
\end{multline*}
Choosing $j_1+\cdots+j_n=\ell$ maximizing the right-hand-side, we have
\begin{eqnarray*}
\log\left|G_{\delta}\right|-\log\left\|G_\ell\right\|&\geq & \sum_{i=1}^n\left(\log|F_{i, \deg(F_i)}|-\log\|F_{i, j_i}\|\right)\\&&-2N\sum_{i=1}^n\left(\deg(D_i)-j_i\right)\log^+|2| - (n-1)\log^+|\ell+1|\\
&\geq & \sum_{i=1}^n\left(\mu(D_i)-2N\log^+|2|\right)(\deg(D_i)-j_i)\\&&-(n-1)\log^+|\ell+1|\\
&\geq &\left(\min_{1\leq i\leq n}\mu(D_i)-2N\log^+|2|\right)\sum_{i=1}^n(\deg(D_i)-j_i)\\&&-(n-1)\log^+|\ell+1|\\
&=&\left(\min_{1\leq i\leq n}\mu(D_i)-2N\log^+|2|\right)\left(\delta-\ell\right)\\&&-(n-1)\log^+|\ell+1|.
\end{eqnarray*}
Dividing both sides by $\delta-\ell$ and taking the minimum over $0\leq \ell<\delta$ gives
\[\mu\left(\sum_{i=1}^n D_i\right)\geq \min_{1\leq i\leq n}\mu(D_i)-2N\log^+|2|-(n-1)\max_{0\leq \ell < \delta}\left(\frac{\log^+|\ell+1|}{\delta-\ell}\right).\]
Note that the maximum in the last term is attained at $\ell=\delta-1$.

The last claim is simply due to the fact that $\mu(D)\geq 0$ implies \[\log|F_{\deg(D)}|\geq \log\|F_k\|\] for all $0\leq k<\deg(D)$, whence $\log\|F\|=\log|F_{\deg(D)}|$.
\end{proof}

We will fix a  block matrix
\begin{equation}\label{eq:block}L=\begin{pmatrix}A & \mathbf{b} \\ \mathbf{0}  & 1	 \end{pmatrix},\end{equation}
where $A$ is $N\times N$, $\mathbf{b}$ is $N\times 1$, and $\mathbf{0}$ is the $1\times N$ zero vector,
and use the same symbol to denote the resulting linear map $L:\PP^{N}\to\PP^{N}$.
Note that the inverse map/matrix is given by
\[L^{-1}=\begin{pmatrix}A^{-1} & -A^{-1}\mathbf{b} \\ \mathbf{0}  & 1	 \end{pmatrix}.\]

We will also write $\phi$ for the power map of degree $d$ on $\PP^{N}$, so that
\[\phi(X_1, ..., X_{N+1})=[X_1^d:\cdots : X_{N+1}^d].\]
Note that endomorphisms of $\PP^N$ of the form under consideration, described in~\eqref{eq:form}, are precisely those of the form $f=L\circ\phi$.

We will be interested in the behaviour of the quantities $\lambda$ and $\mu$ under pushing-forward and pulling-back divisors by $f$, and so consequently by $L$ and by $\phi$.
First, the power map.

\begin{lemma}\label{lem:pushpullpower}
For $\phi$ as above, and any effective divisor $D$,
\[\lambda(\phi^* D)=\lambda(D)\quad\text{ and }\quad\mu(\phi^* D) =\frac{1}{d}\mu(D),\]
\[
\left|\lambda(\phi_* D)-d^{N}\lambda(D)\right|\leq 4Nd^{N}\deg(D)\log^+|2|,
\]
and
\[\quad\mu(\phi_* D) \geq d\mu(D)-2dN\log^+|2|-d(d^N-1)\log^+\left|d^N\deg(D)\right|.\]	
\end{lemma}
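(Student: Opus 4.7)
The plan is to handle the four assertions in the stated order, using two structural facts about $\phi$: (i) $\phi^*$ sends $F(\mathbf{X})$ to $F(X_1^d,\dots,X_{N+1}^d)$, which preserves the multiset of coefficients exactly both globally and after restriction to $X_{N+1}=0$; and (ii) $\phi$ is a Galois cover with group $G=\{\operatorname{diag}(\zeta_1,\dots,\zeta_N,1):\zeta_i\in\mu_d\}\cong\mu_d^N$, so $\phi^*\phi_*D=\sum_{\sigma\in G}\sigma^*D$ as divisors, for any $D$. The elements of $G$ act by diagonal unit matrices that fix $H$ and the point $(0,\dots,0,1)$, so $\sigma^*D$ inherits from $D$ every condition we need, and $\|F\circ\sigma\|=\|F\|$, $\|(F\circ\sigma)|_{X_{N+1}=0}\|=\|F|_{X_{N+1}=0}\|$, and $\|F_k\circ\sigma\|=\|F_k\|$ for all $k$.

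The pullback claims follow from (i) alone. For $D=V(F)$ with $F=\sum_{k=0}^\delta X_{N+1}^kF_k(X_1,\dots,X_N)$, the equality of coefficient multisets of $F$ and $\phi^*F$, and of their restrictions to $X_{N+1}=0$, gives $\lambda(\phi^*D)=\lambda(D)$. The $X_{N+1}$-expansion of $\phi^*F$ is supported only at positions $dk$ with coefficient norm $\|F_k\|$ and unchanged leading constant $F_\delta$, so the other positions contribute $\log\|0\|=-\infty$ and drop out of the minimum defining $\mu(\phi^*D)$, leaving $\mu(\phi^*D)=\min_k\frac{\log|F_\delta|-\log\|F_k\|}{d\delta-dk}=\frac{1}{d}\mu(D)$.

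For both pushforward claims I would apply the just-proved pullback identities to $\phi_*D$ and then exploit (ii). For $\lambda$, since $\lambda(\phi^*D')=\lambda(D')$ for any $D'$, we have $\lambda(\phi_*D)=\lambda(\phi^*\phi_*D)=\lambda(\sum_{\sigma\in G}\sigma^*D)$. By (ii), $\lambda(\sigma^*D)=\lambda(D)$ for every $\sigma$, so $\sum_\sigma\lambda(\sigma^*D)=d^N\lambda(D)$, and~\eqref{eq:lambdaineq} of Lemma~\ref{lem:mulambda}, applied to the $n=d^N$ divisors $\sigma^*D$ of degree $\delta$, yields $|\lambda(\phi_*D)-d^N\lambda(D)|\leq 4Nd^N\delta\log^+|2|$.

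For $\mu$, analogously, $\mu(\phi_*D)=d\cdot\mu(\phi^*\phi_*D)=d\cdot\mu(\sum_{\sigma\in G}\sigma^*D)$, and each $\mu(\sigma^*D)=\mu(D)$, so~\eqref{eq:muineq} of Lemma~\ref{lem:mulambda} with $n=d^N$ summands of total degree $d^N\delta$ delivers the claimed bound. The main technical point is verifying the Galois-cover identity $\phi^*\phi_*D=\sum_{\sigma\in G}\sigma^*D$, a standard fact for finite Galois covers; one checks it on an irreducible $D$ by splitting into the cases $D$ not $G$-invariant (giving $d^N$ distinct translates that each contribute once to $\phi^*\phi_*D$) and $D$ $G$-invariant (in which case the ramification of $\phi|_D$ supplies the matching multiplicity $d^N$). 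Beyond this, the argument is a direct assembly of Lemmas~\ref{lem:mult} and~\ref{lem:mulambda}.
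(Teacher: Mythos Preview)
Your proposal is correct and follows essentially the same route as the paper. The paper's divisors $D_{\mathbf{\zeta}}$, defined by $F(\zeta_1 X_1,\dots,\zeta_N X_N,X_{N+1})=0$, are precisely your $\sigma^*D$ for $\sigma=\operatorname{diag}(\zeta_1,\dots,\zeta_N,1)\in G$, and both arguments reduce the pushforward estimates to the identity $\phi^*\phi_*D=\sum_\sigma \sigma^*D$ together with the invariance $\lambda(\sigma^*D)=\lambda(D)$, $\mu(\sigma^*D)=\mu(D)$ and Lemma~\ref{lem:mulambda}; the paper simply asserts the decomposition while you supply a brief justification (your dichotomy into $G$-invariant versus not should really be refined to the general stabilizer case, but the identity is standard for finite Galois covers).
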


\begin{proof}
Let $F$ be some homogeneous form whose vanishing defines $D$. For the pull-back, notice that $\phi^* D$ is defined by $F(X_1^d, ..., X_{N+1}^d)$. The coefficients of this homogeneous form are exactly those of $F$ (associated to different monomials), and so we certainly have $\|F\|=\|F(X_1^d, ..., X_{N+1}^d)\|$. The claim about $\lambda(\phi^*D)$ follows immediately, while the claim about $\mu(\phi^* D)$ follows once we note that $\phi^* D$ has degree $d\deg(D)$.

Now consider the push-forward. For any tuple $\mathbf{\zeta}=(\zeta_1, ..., \zeta_N)$ of $d$th roots of unity, let $D_{\mathbf{\zeta}}$ be the divisor defined by the vanishing of $F_{\mathbf{\zeta}}(X_1, ..., X_{N+1})=F(\zeta_1X_1, ..., \zeta_NX_N, X_{N+1})$, noting that 
\[\phi^*\phi_* D=\sum_{\zeta_1^d=\cdots =\zeta_{N}^d=1}D_\mathbf{\zeta}.\]
Also, note that $\lambda(D_\mathbf{\zeta})=\lambda(D)$ and $\mu(D_\mathbf{\zeta})=\mu(D)$, since the coefficients of $F_\mathbf{\zeta}$ are the coefficients of $F$ multiplied by various roots of unity.
By~\eqref{eq:lambdaineq} of Lemma~\ref{lem:mulambda} and the estimates for the pullback above, we have
\begin{eqnarray*}
\lambda(\phi_*D)&=&\lambda(\phi^*\phi_*D)\\
&=&\lambda\left(\sum_{\zeta_1^d=\cdots =\zeta_{N}^d=1}D_\mathbf{\zeta}\right)\\
&\leq&\sum_{\zeta_1^d=\cdots =\zeta_{N}^d=1}\lambda(D_\mathbf{\zeta})+4N\sum_{\zeta_1^d=\cdots =\zeta_{N}^d=1}\deg(D_\mathbf{\zeta})\log^+|2|\\
&=&d^N\lambda(D)+4Nd^{N}\deg(D)\log^+|2|	
\end{eqnarray*}
and, by the essentially the same calculation,
\[\lambda(\phi_*D)\geq d^N\lambda(D)-4Nd^{N}\deg(D)\log^+|2|.\]
Meanwhile,
\begin{eqnarray*}
\mu(\phi_* D)&=&d\mu(\phi^*\phi_* D)\\
&=&d\mu\left(\sum_{\zeta_1^d=\cdots =\zeta_{N}^d=1}D_\mathbf{\zeta}\right)\\
&\geq &	d\min\{\mu(D_\mathbf{\zeta})\}-2dN\log^+|2|-d(d^N-1)\log^+\left|\sum_{\zeta_1^d=\cdots =\zeta_{N}^d=1}\deg(D_\mathbf{\zeta})\right|\\
&=&d\mu(D)-2dN\log^+|2|-d(d^N-1)\log^+\left|d^N\deg(D)\right|.
\end{eqnarray*}
\end{proof}

Next we will estimate $\lambda(L^*D)$ and $\lambda(L_*D)$, and $\mu(L^*D)$ and $\mu(L_*D)$. But since our error terms will depend on the matrices representing these linear maps, it makes sense to introduce some N\'{e}ron functions  on matrices. The following lemma is easy to check, and left to the reader.
\begin{lemma} For a matrix $A$ with $i, j$th entry $A_{i, j}$, let $\|A\|=\max_{i, j}|A_{i, j}|$.
	Let $\lambda:\SL_{N}(K)\to \RR$ be defined by
	\[\lambda(A)=N\log\|A\|+\log^+|N!|,\]
	and $\xi:\SL_{N}(K)\to \RR$ by
	\[\xi(A)=\log\|A\|+\log\|A^{-1}\|+\log^+|N|.\]
	Then the functions $\lambda$ and $\xi$  are non-negative, and satisfy
 \begin{equation}\label{eq:Ainv}\xi(A)\leq \lambda(A)+\log^+|N|\end{equation}
 and
 \begin{equation}\label{eq:lambdainv}\lambda(A^{-1})\leq (N-1)\lambda(A)\end{equation}
 
%Also useful is the fact that
%\[\log\|AB\|\leq \log\|A\|+\log\|B\|+\log^+|N|,\]
%and in particular
%\[-\log^+|N|\leq \log\|A\|+\log\|A^{-1}\|.\]
\end{lemma}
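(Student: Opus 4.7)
The plan is to derive all four claims from the uniform inequality
\[\log|x_1+\cdots+x_n| \leq \log\max_i|x_i| + \log^+|n|\]
(valid in both the archimedean and non-archimedean cases, as noted early in the paper), applied to three natural algebraic identities: the Leibniz formula $\det A = \sum_\sigma \operatorname{sgn}(\sigma)\prod_i A_{i,\sigma(i)}$; the row-column identity $(AA^{-1})_{1,1} = \sum_k A_{1,k}(A^{-1})_{k,1} = 1$; and the cofactor formula $(A^{-1})_{ij} = \pm M_{ji}$, where each $M_{ji}$ is an $(N-1)\times(N-1)$ minor expanded by Leibniz into $(N-1)!$ products of $N-1$ entries of $A$ (using here that $\det A = 1$, so no division is needed).

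First I would dispense with the non-negativity statements. Applying the uniform inequality to $0 = \log|\det A|$ yields $0 \leq N\log\|A\| + \log^+|N!| = \lambda(A)$, and applying it to $0 = \log|(AA^{-1})_{1,1}|$ yields $0 \leq \log\|A\| + \log\|A^{-1}\| + \log^+|N| = \xi(A)$.

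Next, applying the uniform inequality to the Leibniz expansion of each $M_{ji}$ gives the basic cofactor estimate
\[\log\|A^{-1}\| \leq (N-1)\log\|A\| + \log^+|(N-1)!|.\]
Inequality~\eqref{eq:Ainv} then follows by adding $\log\|A\|+\log^+|N|$ to both sides to reconstitute $\xi(A)$ on the left, and noting that $\log^+|(N-1)!|\leq \log^+|N!|$ to match $\lambda(A)+\log^+|N|$ on the right.

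For~\eqref{eq:lambdainv}, multiplying the cofactor estimate by $N$ and adding $\log^+|N!|$ gives
\[\lambda(A^{-1}) \leq N(N-1)\log\|A\| + N\log^+|(N-1)!| + \log^+|N!|,\]
which one compares to $(N-1)\lambda(A) = N(N-1)\log\|A\| + (N-1)\log^+|N!|$. I expect the main obstacle to be the bookkeeping of the archimedean combinatorial constants: the residual inequality reduces to an arithmetic statement about $\log^+|(N-1)!|$ and $\log^+|N!|$, trivial non-archimedeanly, but at archimedean places requires some care — likely by invoking the non-negativity $\lambda(A)\geq 0$ already proved (equivalently, $N\log\|A\|\geq -\log^+|N!|$) to absorb any residual additive constant into the $N(N-1)\log\|A\|$ term.
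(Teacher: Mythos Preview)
Your approach to the non-negativity of $\lambda$ and $\xi$ and to~\eqref{eq:Ainv} is correct; these follow exactly as you say from the Leibniz formula, the identity $(AA^{-1})_{1,1}=1$, and the cofactor expansion, combined with the uniform triangle inequality. The paper itself offers no proof, leaving the verification to the reader, so there is nothing further to compare on these points.

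The obstacle you anticipate in~\eqref{eq:lambdainv} is genuine, and your proposed rescue via $\lambda(A)\geq 0$ cannot work: in fact~\eqref{eq:lambdainv} is \emph{false} at archimedean places for $N\geq 3$. Take $N=3$, $K=\CC$ with the usual absolute value, and
\[A=\begin{pmatrix}1/2&0&0\\0&1&1\\0&-1&1\end{pmatrix},\qquad A^{-1}=\begin{pmatrix}2&0&0\\0&1/2&-1/2\\0&1/2&1/2\end{pmatrix}.\]
Here $\det A=1$, $\|A\|=1$, and $\|A^{-1}\|=2$, so $\lambda(A)=\log 6$ while $\lambda(A^{-1})=3\log 2+\log 6=\log 48>\log 36=2\lambda(A)$. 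Note that $\lambda(A)=\log 6$ is strictly positive, so there is no slack from non-negativity to absorb; your cofactor bound $\log\|A^{-1}\|\leq (N-1)\log\|A\|+\log^+|(N-1)!|$ is sharp for this $A$ (the large entry of $A^{-1}$ is the $2\times 2$ minor $1\cdot 1-1\cdot(-1)=2$), and the residual archimedean constant $N\log(N-1)!-(N-2)\log N!=2\log(N-1)!-(N-2)\log N$ is genuinely positive for every $N\geq 3$. What your argument \emph{does} establish is
\[\lambda(A^{-1})\leq (N-1)\lambda(A)+\bigl(N\log^+|(N-1)!|-(N-2)\log^+|N!|\bigr),\]
which coincides with~\eqref{eq:lambdainv} at non-archimedean places and differs by an explicit constant at the archimedean place. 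This weaker form is all that is actually needed downstream (the only use is in bounding $h(A^{-1})$ by $(N-1)h(A)$ in the proof of Theorem~\ref{th:main}, where an additive constant depending only on $N$ is absorbed into $C_1$).
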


Lemma~\ref{lem:pushpullpower} describes the behaviour certain quantities associated to divisors under pushing-forward or pulling-back by the power map, and now we present a corresponding result relative to linear maps. These estimates are very similar to those in the proof of~\cite[Lemma~10]{mincrit}, but the precise bounds depend on the special form of the matrix $L$.
\begin{lemma}\label{lem:pushpulllinear}
 For  
 \[c_1=(2N-1)\log^+|2|\qquad\text{and}\qquad c_2=\log^+|4N(N+1)|,\]
 $L$ as in~\eqref{eq:block},
 and  any effective divisor $D$, we have
 \begin{multline*}
-\deg(D)\Big(\log\|L\|-\log\|A\| 
+\lambda(L)+c_2\Big) - c_1 \\\leq \lambda(L^* D)-\lambda(D) \\\leq \deg(D)\Big(\log\|L\|-\log\|A\| 
+\lambda(A)+c_2\Big) + c_1
 \end{multline*}
and
\begin{multline*}
-\deg(D)\Big(\log\|L\|-\log\|A\| 
+\lambda(A)+c_2\Big) - c_1 \\\leq \lambda(L_* D)-\lambda(D) \\\leq \deg(D)\Big(\log\|L\|-\log\|A\| 
+\lambda(L)+c_2\Big) + c_1.
\end{multline*}
\end{lemma}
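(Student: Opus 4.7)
The plan is to reduce both bounds to explicit norm computations on the form $F$ defining $D$, with Lemma~\ref{lem:mult} as the workhorse. Writing $F(\mathbf{X})=\sum_k X_{N+1}^k F_k(\mathbf{X}')$ for $\mathbf{X}'=(X_1,\ldots,X_N)$, the structural input is that the bottom row of both $L$ and $L^{-1}$ is $(\mathbf{0},1)$: the pullback $L^*D$ is cut out by $F(L\mathbf{X})$, and its restriction to $H=\{X_{N+1}=0\}$ is $F_0(A\mathbf{X}')$; similarly $L_*D$ is cut out by $F(L^{-1}\mathbf{X})$, restricting to $F_0(A^{-1}\mathbf{X}')$ on $H$. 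Hence
\[\lambda(L^*D)-\lambda(D)=\bigl(\log\|F\circ L\|-\log\|F\|\bigr)-\bigl(\log\|F_0\circ A\|-\log\|F_0\|\bigr),\]
with the obvious analogue for $\lambda(L_*D)-\lambda(D)$ using $L^{-1}$ and $A^{-1}$.

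Next I would establish a generic substitution estimate: for any form $G$ and any invertible matrix $M$, expanding $G\circ M$ as a sum of products of coordinates of $M\mathbf{X}$ (each a linear form of norm at most $\|M\|$) and invoking \eqref{eq:sumprod} of Lemma~\ref{lem:mult} gives
\[\log\|G\circ M\|-\log\|G\|\leq \deg(G)\log\|M\|+O(\deg G),\]
with a matching lower bound $\geq -\deg(G)\log\|M^{-1}\|-O(\deg G)$ obtained by applying the upper bound to $G=(G\circ M)\circ M^{-1}$. Instantiated with $M\in\{L,A,L^{-1},A^{-1}\}$, this supplies every estimate needed.

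To place these in the stated form I would convert inverse norms back into norms of $L$ and $A$: the adjugate formula gives $\log\|A^{-1}\|\leq (N-1)\log\|A\|+O(1)$ (equivalently \eqref{eq:lambdainv}), and the explicit block form of $L^{-1}$ gives $\log\|L^{-1}\|\leq \log\|L\|+\log\|A^{-1}\|+O(1)\leq \log\|L\|+(N-1)\log\|A\|+O(1)$. For the pullback upper bound I would combine the upper estimate on $\log\|F\circ L\|-\log\|F\|$ (contributing $\deg(D)\log\|L\|$) with the lower estimate on $\log\|F_0\circ A\|-\log\|F_0\|$ (contributing $-\deg(F_0)\log\|A^{-1}\|\geq -\deg(D)(N-1)\log\|A\|$, using $\log\|A^{-1}\|\geq 0$), arriving at $\deg(D)(\log\|L\|-\log\|A\|+\lambda(A))+O(\deg D)$ as required. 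The other three bounds follow by symmetric choices; for instance the upper pushforward bound picks up $\deg(D)\log\|L^{-1}\|+\deg(F_0)\log\|A\|\leq \deg(D)(\log\|L\|+N\log\|A\|)+O$, which rearranges as $\deg(D)(\log\|L\|-\log\|A\|+\lambda(L))+O(\deg D)$ after using $(N+1)\log\|A\|\leq \lambda(L)$, immediate from $\|L\|\geq\|A\|$.

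The conceptual hurdle I expect is ensuring the error terms take the precise asymmetric form $\log\|L\|-\log\|A\|+\lambda(\cdot)$ rather than the cruder $\lambda(L)$ alone: this separation of the translation $\mathbf{b}$ from the linear part $A$ is essential for later applications such as Theorem~\ref{th:main}, and comes out for free here because the restriction to $H$ involves only the $A$-part of $L$ (respectively the $A^{-1}$-part of $L^{-1}$), never the $\mathbf{b}$. The technical hurdle is bookkeeping the numerous $\log^+|m|$ terms generated by repeated applications of~\eqref{eq:sumprod}, which must aggregate into precisely the stated constants $c_1=(2N-1)\log^+|2|$ and $c_2=\log^+|4N(N+1)|$; this should be routine but requires care because the contributions from the four substitution estimates combine non-trivially.
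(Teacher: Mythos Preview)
Your approach is essentially the paper's: decompose $\lambda(L^*D)-\lambda(D)$ as $(\log\|F\circ L\|-\log\|F\|)-(\log\|F_0\circ A\|-\log\|F_0\|)$, prove a generic substitution estimate $\log\|G\circ M\|\leq \log\|G\|+\deg(G)\log\|M\|+O(\deg G)$ by expanding monomials (this is exactly the paper's inequality~\eqref{eq:basicpullback}), get the reverse inequality via $G=(G\circ M)\circ M^{-1}$, and repackage $\log\|A^{-1}\|$ as $\lambda(A)-\log\|A\|$ (plus a constant) using~\eqref{eq:Ainv}. Two small points. First, the paper obtains the pushforward bounds in one line by applying the pullback bounds to $L_*D$ via $D=L^*L_*D$; this avoids your separate treatment of $L^{-1}$ and the ad hoc inequality $\log\|L^{-1}\|\leq \log\|L\|+\log\|A^{-1}\|+O(1)$. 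Second, your parenthetical ``using $\log\|A^{-1}\|\geq 0$'' is both false in the archimedean case (one only has $\log\|A^{-1}\|\geq -\frac{1}{N}\log^+|N!|$) and unnecessary, since $\deg F_0=\deg D$. Finally, if you literally invoke~\eqref{eq:sumprod} the constants come out slightly larger than the stated $c_1,c_2$; the paper's direct count (bounding $\|m(B\mathbf{X})\|$ by $\|B\|^{\deg F}$ times the number $(N{+}1)^{\deg F}$ of expanded terms, and $\binom{\deg F+N}{N}\leq 2^{\deg F+N}$) is what produces exactly $c_2=\log^+|4N(N+1)|$ and $c_1=(2N-1)\log^+|2|$.
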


\begin{proof} Choose a defining homogeneous form $F$ for $D$.
Any homogeneous form $F(\mathbf{X})=\sum c_mm(\mathbf{X})$ is a linear combination of at most $\binom{\deg(F)+N}{N}$ monomials of degree $\deg(F)$, and so we have for any $B\in \SL_{N+1}(K)$
\begin{eqnarray}\log\|F(B\mathbf{X})\|&=& \log\left\|\sum c_m m(B\mathbf{X})\right\|\nonumber \\
&\leq&\log\max \|c_m m(B\mathbf{X})\| + \log^+\left|\binom{\deg(F)+N}{N}\right| \nonumber \\
&\leq&\log\|F\|+\deg(F)\log\|B\|+\deg(F)\log^+|N+1|\label{eq:basicpullback} \\&&+\deg(F)\log^+|2|+N\log^+|2|\nonumber
	\end{eqnarray}
by the triangle inequality. On the other hand, by~\eqref{eq:Ainv}  we have
\begin{eqnarray*}
\log\|F\|&=&\log\|F(BB^{-1}\mathbf{X})\|\\
&\leq & \log\|F(B\mathbf{X})\|+\deg(F)\log\|B^{-1}\|+\deg(F)\log^+|N+1|\\&&+\deg(F)\log^+|2|+N\log^+|2|\\
&\leq & \log\|F(B\mathbf{X})\| +\deg(F)\lambda(B)-\deg(F)\log\|B\|\\
&& +\deg(F)\log^+|N+1|+\deg(F)\log^+|2|+N\log^+|2|.
\end{eqnarray*}

Now for $L$ of the form~\eqref{eq:block}, note that if $F|_H(X_1, ..., X_N)=F(X_1, ..., X_N, 0)$, then
$(F\circ L)|_H=(F|_H)\circ A$,
so for $D$ defined by $F=0$ we have 
\begin{eqnarray*}
\lambda(L^*D)&=&\log\|F\circ L\| - \log\|F_0\circ A\|\\
&\leq & \log\|F\|+\deg(F)\log\|L\|+\deg(F)\log^+|N+1| \\&&+\deg(F)\log^+|2|+N\log^+|2|\\&&
-\log\|F_0\| +\deg(F)\lambda(A)-\deg(F)\log\|A\|\\
&& +\deg(F)\log^+|N|+\deg(F)\log^+|2|+(N-1)\log^+|2|\\
&=&\lambda(D)+\deg(F)\Big(\log\|L\|-\log\|A\|+\log^+|4N(N+1)| 
+\lambda(A)\Big)\\&& + (2N-1)\log^+|2|
\end{eqnarray*}
Similarly,
\begin{eqnarray*}
\lambda(L^*D)&=&\log\|F\circ L\| - \log\|F_0\circ A\|\\
&\geq &\log\|F\|-\deg(F)\lambda(L)+\deg(F)\log\|L\|\\
&& -\deg(F)\log^+|N+1|-\deg(F)\log^+|2|-N\log^+|2|\\
&& - \log\|F_0\|-\deg(F)\log\|A\|-\deg(F)\log^+|N|\\&&-\deg(F)\log^+|2|-(N-1)\log^+|2|\\
&=&\lambda(D)-\deg(D)\Big(\lambda(L)+\log\|L\|-\log\|A\|+\log^+|4N(N+1)|\Big)\\&& - (2N-1)\log^+|2|.
\end{eqnarray*}

The bounds for $\lambda(L_*D)$ follow immediately from writing $D=L^ *L_*D$
\end{proof}

Lemma~\ref{lem:pushpulllinear} gives estimates on $\lambda(L_* D)-\lambda(D)$ which depend on $L$, as one might expect. However, by analogy with $z\mapsto z+c$, one might also expect much more uniform estimates once $D$ is sufficiently ``large'' with respect to the coefficients of $L$, estimates which depend only on the behaviour at infinity.
The rest of the section is more technical, and gives such estimates.

\begin{lemma}\label{lem:tc} 
For $\mathbf{c}=(c_1, ..., c_N)\in K^N$, let \[T_{\mathbf{c}}(X_1, ..., X_{N+1})=(X_1+c_1X_{N+1}, ..., X_N+c_NX_{N+1}, X_{N+1})\] be the translation-by-$\mathbf{c}$ map,
 let $D$ be a divisor not containing $H$ or the origin, and let
\[c_3= \begin{cases} (N+2)\log 2+\log N & \text{if $|\cdot|$ is archimedean,}\\\frac{\log p}{p-1} & \text{if $|\cdot|$ is $p$-adic,}\\
0 & \text{otherwise.}	
\end{cases}\]
If  \[\mu(D)>\log^+\|\mathbf{c}\|+c_3+2\log^+|\deg(D)|,\]
then $T_{\mathbf{c}}^*D$ also does not contain $H$ or the origin, and we have
\[\mu(T_{\mathbf{c}}^*D)\geq \mu(D)-\log^+|\deg(D)|-\log^+|2|.\] 
%and
%\[\lambda(T_{\mathbf{c}}^*D)\geq \lambda(D)-\log^+|2|.\]
%\nts{Do we even need this second inequality? I don't think it gets used.}
\end{lemma}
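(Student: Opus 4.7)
Write $F(\mathbf{X}) = \sum_{k=0}^{\deg D} X_{N+1}^k F_k(X_1,\ldots,X_N)$ for a defining form for $D$, and let $G = F\circ T_{\mathbf{c}} = \sum_\ell X_{N+1}^\ell G_\ell$ be the corresponding form for $T_{\mathbf{c}}^*D$. Since $T_{\mathbf{c}}$ acts as the identity on $H$, we have $G_0 = F_0$, so $G_0\neq 0$ and $T_{\mathbf{c}}^*D$ does not contain $H$. The hypothesis $\mu(D)>0$ gives $|F_{\deg D}|\geq \|F_k\|$ for all $k$, which together with the definition of $\mu$ yields the key estimate
\[\log\|F_k\|\leq \log|F_{\deg D}|-(\deg D-k)\mu(D)\quad\text{for all } 0\leq k\leq\deg D.\]
It remains to bound $|G_{\deg D}|$ from below (which will simultaneously show $T_{\mathbf{c}}^*D$ avoids the origin) and $\|G_\ell\|$ from above for $\ell<\deg D$.

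For the top coefficient, evaluating $G$ along the line $X_1=\cdots =X_N = 0$ yields $G_{\deg D} = F(c_1,\ldots,c_N,1) = F_{\deg D}+\sum_{k<\deg D}F_k(\mathbf{c})$. Since $F_k$ is homogeneous of degree $\deg D-k$ in $N$ variables, the triangle (or ultrametric) inequality gives $|F_k(\mathbf{c})|\leq \|F_k\|\max(1,\|\mathbf{c}\|)^{\deg D-k}$ up to a monomial-count factor in the archimedean case. Combining with the key estimate converts this into a geometric decay in $k$ of ratio $e^{-(\mu(D)-\log^+\|\mathbf{c}\|)}$. The hypothesis is designed so that this decay defeats the combinatorial factors: the archimedean part of $c_3 = (N+2)\log 2+\log N$ dominates the monomial counts, while the $p$-adic part $\frac{\log p}{p-1}$ handles a strict-gap issue for the ultrametric inequality. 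One concludes $\sum_{k<\deg D}|F_k(\mathbf{c})| < |F_{\deg D}|$ (with a factor of $1/2$ to spare in the archimedean case), so $G_{\deg D}\neq 0$ and $\log|G_{\deg D}|\geq \log|F_{\deg D}|-\log^+|2|$.

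For $\ell<\deg D$, expanding $F_k(X+\mathbf{c}X_{N+1}) = \sum_j X_{N+1}^j P_{k,j}(X)$ by the multinomial theorem gives $G_\ell = \sum_{k+j=\ell}P_{k,j}$, and a coefficient estimate in the style of~\eqref{eq:basicpullback} yields $\|P_{k,j}\|\leq \|F_k\|\max(1,\|\mathbf{c}\|)^j\,\kappa_{k,j}$, where $\kappa_{k,j}$ is the combinatorial factor $\binom{\deg D - k + N - 1}{j}$ in the archimedean case (and is trivial otherwise). Applying the key estimate and writing $\deg D-k = (\deg D-\ell)+j$, the factor $j\log^+\|\mathbf{c}\|$ is absorbed by $j\mu(D)$, and the slack in the hypothesis eats $\log\kappa_{k,j}$; summing the at most $\ell+1$ terms gives
\[\log\|G_\ell\|\leq \log|F_{\deg D}|-(\deg D-\ell)\mu(D)+\log^+|\deg D|+(\deg D-\ell)\log^+|2|.\]
Combined with the bound on $|G_{\deg D}|$ and divided by $\deg D-\ell\geq 1$, this gives
\[\frac{\log|G_{\deg D}|-\log\|G_\ell\|}{\deg D-\ell}\geq \mu(D)-\log^+|\deg D|-\log^+|2|,\]
with the worst case at $\deg D-\ell=1$, so taking the minimum over $\ell$ gives the claim. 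The delicate part is the archimedean bookkeeping for $\kappa_{k,j}$, whose logarithm is of order $j\log(\deg D + N)$; one must verify that the specific constant $c_3$ supplies just enough slack uniformly over every pair $(k,j)$ with $k+j=\ell$ after the $2\log^+|\deg D|$ buffer from the hypothesis is accounted for.
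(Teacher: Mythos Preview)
Your outline follows the paper's proof closely: bound the top coefficient $G_{\deg D}=F(\mathbf{c},1)$ from below via $|G_{\deg D}-F_{\deg D}|<|F_{\deg D}|$, then bound each $\|G_\ell\|$ from above by expanding $F_k(X+\mathbf{c}X_{N+1})$ in powers of $X_{N+1}$. The paper organizes the latter expansion as a Taylor series, writing the $j$th piece as $\tfrac{1}{j!}\partial_{X_{N+1}}^j(F_k\circ T_{\mathbf{c}})|_{X_{N+1}=0}$ and then applying the chain rule; your multinomial expansion is the same object with different bookkeeping.

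Two specific points deserve attention. First, you misplace the role of the $p$-adic constant $c_3=\tfrac{\log p}{p-1}$: it is \emph{not} needed for any ``strict-gap issue'' at the top coefficient (in the ultrametric case the strict hypothesis $\mu(D)>\log^+\|\mathbf{c}\|$ alone gives $|G_{\deg D}|=|F_{\deg D}|$ exactly). In the paper's argument this constant enters because the Taylor form carries an explicit $1/j!$, and Legendre's formula gives $\log^+|1/j!|\leq j\cdot\tfrac{\log p}{p-1}$. In your multinomial route the coefficients are integers, so in fact no $p$-adic correction is needed at all; the $c_3$ in the hypothesis is simply unused slack.

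Second, your displayed bound $\log\|G_\ell\|\leq \log|F_{\deg D}|-(\deg D-\ell)\mu(D)+\log^+|\deg D|+(\deg D-\ell)\log^+|2|$ does not yield the stated conclusion: combining it with $\log|G_{\deg D}|\geq\log|F_{\deg D}|-\log^+|2|$ and taking $\deg D-\ell=1$ gives only $\mu(D)-\log^+|\deg D|-2\log^+|2|$. The paper's estimate has no $(\deg D-\ell)\log^+|2|$ term (the single $\log^+|2|$ arises only when passing from $|F_{\deg D}|$ to $|G_{\deg D}|$), so either that term is a slip or your final arithmetic loses one $\log^+|2|$.
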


\begin{proof}
If $D$ is defined by the vanishing of
\[F(X_1, ..., X_{N+1})=F_0(X_1, ..., X_N)+X_{N+1}F_1(X_1, ..., X_N)+\cdots +X_{N+1}^{\deg(D)}F_{\deg(D)},\]
then $T_\mathbf{c}^*D$ is defined by the vanishing of
\[E(X_1, ..., X_{N+1})=F(X_1+c_1X_{N+1}, ..., X_N+c_NX_{N+1}, X_{N+1}),\]
which we would like to write as
\[E(X_1, ..., X_{N+1})=E_0(X_1, ..., X_N)+X_{N+1}E_1(X_1, ..., X_N)+\cdots +X_{N+1}^{\deg(D)}E_{\deg(D)}.\]

With a view to computing $\mu(T_\mathbf{c}^*D)$, note that
\[E_{\deg(D)}=E(0, 0, ..., 1)=F(c_1, ..., c_N, 1),\]
and so
\begin{eqnarray}
\nonumber\log |E_{\deg(D)}-F_{\deg(D)}|&= & \log|F(c_1, ..., c_N, 1)-F_{\deg(D)}|\\
\nonumber &=&\log\left|\sum_{k=1}^{\deg(D)} F_{\deg(D)-k}(\mathbf{c})\right|\\
\nonumber &\leq & \max_{1\leq k\leq \deg(D)}\Big(k\log\|\mathbf{c}\|+\log\|F_{\deg(D)-k}\|\\
\nonumber &&+\log^+\left|\binom{\deg(F_{\deg(F)-k})+N}{N}\right|\Big)+\log^+|\deg(D)|\\
\nonumber &\leq & \max_{1\leq k\leq \deg(D)}\Big(k\mu(D)-kc_3-k2\log^+|\deg(D)|\\
\nonumber &&+\log\|F_{\deg(D)-k}\|+k(N+1)\log^+|2|\Big)\\
\nonumber &&+\log^+|\deg(D)|\\
\nonumber & < & \log|F_{\deg(D)}|+\log^+|\deg(D)|\\
 &&-\min_{1\leq k\leq \deg(D)}k(c_3+2\log^+|\deg(D)|-(N+1)\log^+|2|)\label{eq:mink}\\
\nonumber &\leq & \log|F_{\deg(D)}|-\log^+|2|,\nonumber
\end{eqnarray}
since the minimum in~\eqref{eq:mink} is attained with $k=1$ (the term in parentheses being non-negative).
 So we get
\[\log|E_{\deg(D)}|\geq \log|F_{\deg(D)}|-\log^+|2|,\]
and also $E_{\deg(D)}\neq 0$, which is equivalent to $T^*_{\mathbf{c}}D$ not containing the origin.

In order to obtain a lower bound on $\mu(T^*_{\mathbf{c}})$, we now need an upper bound on $\|E_s\|$ for $s<\deg(D)$.
We can expand each $F_\ell\circ T_{\mathbf{c}}$ as a polynomial in $X_{N+1}$ in a fairly simple manner, namely by
\[F_\ell\circ T_{\mathbf{c}} (\mathbf{X})=\sum_{j=0}^{\deg(F_\ell)}\frac{X_{N+1}^j}{j!}\left(\frac{\partial^j (F_\ell\circ T_{\mathbf{c}})}{\partial X_{N+1}^j}\Big|_{X_{N+1}=0}\right).\]
By the chain rule, if we write
\[F_{i, k_1, ..., k_j}=\frac{\partial^jF_i}{\partial X_{k_1}\cdots \partial X_{k_j}},\]
then
\begin{equation}\label{eq:partiald}\frac{\partial^j (F_\ell\circ T_{\mathbf{c}})}{\partial X_{N+1}^j}(X_1, ..., X_N, 0)=\sum_{k_1, ..., k_j=1}^Nc_{k_1}\cdots c_{k_j}F_{\ell, k_1, ..., k_j}(X_1, ..., X_N).\end{equation}
For any homogeneous form $H$,
%\[\log\left\|\frac{\partial H}{\partial X_i}\right\|
%	\leq  \log\|H\|+\log^+|\deg(H)|\]
%	and so 
\[\log\left\|\frac{\partial^j H}{\partial X_{k_1}\cdots \partial X_{k_j}}\right\| \leq \log\|H\|+j\log^+|\deg(H)|,\]
and so each summand on the right-hand side of~\eqref{eq:partiald} satisfies
\[\log\|c_{k_1}\cdots c_{k_j}F_{\ell, k_1, ..., k_j}\|\leq j\log\|\mathbf{c}\|+\log\|F_{\ell}\|+j\log^+|\deg(D)-\ell|\]
Summing over all terms on the right in~\eqref{eq:partiald} then gives
\begin{multline*}
\log\left\|\frac{1}{j!}\frac{\partial^j (F_\ell\circ T_{\mathbf{c}})}{\partial X_{N+1}^j}(X_1, ..., X_N, 0)\right\|\\ \leq \log\|F_{\ell}\|+j\Big(\log\|\mathbf{c}\|+\log^+|\deg(D)-\ell|+\log^+|N|\Big)+\log^+\left|\frac{1}{j!}\right|	
\end{multline*}

At this point, we note that if $|\cdot|$ is not a $p$-adic absolute value, for any prime integer $p$, then $\log^+|\frac{1}{j!}|=0$. If $|\cdot|$ is the $p$-adic absolute value,
\[\log^+\left|\frac{1}{j!}\right|=\sum_{t=1}^\infty\left\lfloor \frac{j}{p^t}\right\rfloor\log p \leq j\left(\frac{\log p}{p-1}\right)\]
by Legendre's formula, and so either way
\[\log^+\left|\frac{1}{j!}\right|\leq jc_4,\]
where \[c_4=\begin{cases} \frac{\log p}{p-1} & \text{if $|\cdot|$ is $p$-adic,}\\
0 & \text{otherwise.}\end{cases}\]

Now, comparing coefficients of $X_{N+1}^s$, we have
\[E_s(X_1, ..., X_N)=\sum_{j=0}^s\frac{1}{j!}\frac{\partial^j F_{s-j}\circ T_{\mathbf{c}}}{\partial X_{N+1}^j}(X_1, ..., X_N, 0),\]
whence
\begin{eqnarray}
\log\|E_s\|&\leq& \max_{0\leq j\leq s}\Big\{\log\|F_{s-j}\|+j\Big(\log\|\mathbf{c}\|+\log^+|\deg(D)|+\log^+|N|+c_4\Big)\Big\}\nonumber\\&&+\log^+|s+1|\nonumber\\
  \nonumber\\&\leq & \max_{0\leq j\leq s}\Big\{\log|F_{\deg(D)}|-(\deg(D)-s+j)\mu(D)\nonumber\\&&+j\Big(\mu(D)-c_3-2\log^+|\deg(D)|+\log^+|\deg(D)|+\log^+|N|+c_4\Big)\Big\}\nonumber \\ &&+\log^+|\deg(D)|\nonumber\\ 
&\leq & \log|E_{\deg(D)}|+\log^+|2|-(\deg(D)-s)\mu(D)+\log^+|\deg(D)|\label{eq:es},
\end{eqnarray}
since
\[c_3\geq \log^+|2|+\log^+|N|+c_4\]
But~\eqref{eq:es} for all $0\leq s<\deg(D)$ gives $\mu(T^*_{\mathbf{c}}D)\geq \mu(D)-\log^+|\deg(D)|-\log^+|2|$.
\end{proof}

Lemma~\ref{lem:tc} effectively gives estimates on pushing-forward or pulling-back by $L$, in the special case where $A$ is the identity matrix. It turns out that, with a little more work, this special case gives us the general case.
\begin{lemma}\label{lem:keything}
Let 
\[c_5=\log^+|N|+N\log^+|2|+\frac{1}{N}\log^+|N!|\geq 0,\] 
let $D$ be an effective divisor of degree at least 1, not containing the origin, and suppose that 
  \begin{equation}\label{eq:mulower}\mu(D)>\log^+\|\mathbf{b}\|+c_3+c_5+\log\|A^{-1}\|+2\log^+|\deg(D)|.\end{equation}
   Then \begin{equation}\label{eq:mustar}\mu (L_*D)\geq \mu(D)-\log^+|\deg(D)|-\log^+|2|-c_5-\log\|A^{-1}\|\end{equation} and \begin{multline} \lambda(D)-\deg(D)(\log\|A^{-1}\|+\log^+|2N|)-N\log^+|2|\\ \leq \lambda(L_*D)\\ \leq\lambda(D)+\deg(D)(\log\|A\|+\log^+|2N|)+N\log^+|2| \end{multline}
\end{lemma}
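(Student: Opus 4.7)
The plan is to factor $L = T_{\mathbf{b}} \circ M_A$, where $M_A$ is the block-diagonal linear map sending $(X_1, \ldots, X_N, X_{N+1})$ to $(A(X_1, \ldots, X_N), X_{N+1})$ and $T_{\mathbf{b}}$ is the translation map of Lemma~\ref{lem:tc}. Since pushforward is functorial and both factors are isomorphisms, $L_*D = (T_{\mathbf{b}})_*(M_A)_*D$, where $(M_A)_* = M_{A^{-1}}^*$ and $(T_{\mathbf{b}})_* = T_{-\mathbf{b}}^*$. I would handle the $(M_A)_*$ step by a direct computation (since $M_A$ does not mix $X_{N+1}$ with the other coordinates), and then apply Lemma~\ref{lem:tc} to the translation step; the hypothesis on $\mu(D)$ is chosen precisely so that the intermediate divisor $(M_A)_*D$ still satisfies the hypothesis of that lemma.

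For the $M_A$ step, writing $F(\mathbf{X}) = \sum_k X_{N+1}^k F_k(X_1,\ldots,X_N)$ for a defining form of $D$, the divisor $(M_A)_*D$ is defined by $F \circ M_{A^{-1}} = \sum_k X_{N+1}^k (F_k \circ A^{-1})$. The crucial feature is that the scalar leading coefficient $F_{\deg(D)}$ is unchanged, while each lower coefficient becomes $F_k \circ A^{-1}$. Estimating $\log\|F_k \circ A^{-1}\| \leq \log\|F_k\| + (\deg(D)-k)\log\|A^{-1}\| + \mathrm{err}$ by the argument in the proof of Lemma~\ref{lem:pushpulllinear} (applied in $N$ variables rather than $N+1$, and using the adjugate identity $A^{-1} = A^{\adj}$ with $(N-1)!$ terms per entry to produce the $\frac{1}{N}\log^+|N!|$ contribution), dividing the resulting numerator $\log|F_{\deg(D)}| - \log\|F_k\circ A^{-1}\|$ by $\deg(D)-k$, and minimizing over $k$ (with the worst case at $k=\deg(D)-1$), yields
\[\mu((M_A)_*D) \geq \mu(D) - \log\|A^{-1}\| - c_5.\]
The hypothesis on $\mu(D)$ is arranged exactly so this gives $\mu((M_A)_*D) > \log^+\|\mathbf{b}\| + c_3 + 2\log^+|\deg(D)|$, at which point Lemma~\ref{lem:tc} applied with $\mathbf{c}=-\mathbf{b}$ yields $\mu(L_*D) \geq \mu((M_A)_*D) - \log^+|\deg(D)| - \log^+|2|$, and combining gives~\eqref{eq:mustar}.

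For the $\lambda$ estimates, since $\mu \geq 0$ for every intermediate divisor, the last clause of Lemma~\ref{lem:mulambda} reduces $\lambda$ to $\log|F'_{\deg(D)}| - \log\|F'_0\|$ for each step. Because $M_A$ fixes $F_{\deg(D)}$ and sends $F_0 \mapsto F_0 \circ A^{-1}$, bounding $\log\|F_0 \circ A^{-1}\|$ on both sides (using $F_0 = (F_0\circ A^{-1})\circ A$ for the lower bound and direct pullback estimation for the upper bound) yields
\[-\deg(D)\log\|A^{-1}\| - \mathrm{err} \leq \lambda((M_A)_*D) - \lambda(D) \leq \deg(D)\log\|A\| + \mathrm{err}.\]
For the translation step, $(L_*D)|_H = ((M_A)_*D)|_H$ since $T_{-\mathbf{b}}$ fixes $H$ pointwise (so the denominator $\log\|F_0\circ A^{-1}\|$ in the $\lambda$ formula is preserved), and the calculation in the proof of Lemma~\ref{lem:tc} shows $\bigl|\log|E_{\deg(D)}| - \log|F_{\deg(D)}|\bigr| \leq \log^+|2|$. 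Combining the two steps yields the claimed two-sided bound on $\lambda(L_*D)$, with the $\log^+|2N|$ and $N\log^+|2|$ terms absorbing the various archimedean constants.

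The main technical obstacle is calibrating the various error terms so that they consolidate into the precise constants $c_5$, $\log^+|2N|$, and $N\log^+|2|$ appearing in the statement; in particular the fractional contribution $\frac{1}{N}\log^+|N!|$ to $c_5$ comes from expressing $A^{-1}$ via its adjugate when bounding $\log\|F_k\circ A^{-1}\|$. The structural content otherwise is routine, but the interplay between the $(\deg(D)-k)$-factors in the denominators of $\mu$ and the dimension-dependent error in the linear pullback estimate must be managed at the worst case $k=\deg(D)-1$ for the $\mu$-bound to close up as stated.
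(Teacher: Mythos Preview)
Your approach is essentially identical to the paper's: both factor $L^{-1}$ as a block-diagonal linear map followed by a translation (the paper's $(L_0^{-1})^*$ is your $M_{A^{-1}}^*$), first bound $\mu$ under the linear part via the basic pullback estimate, verify the hypothesis of Lemma~\ref{lem:tc}, and then invoke that lemma for the translation step; the $\lambda$-estimates follow in both cases from $\mu\geq 0$ forcing $\lambda = \log|F_{\deg(D)}|-\log\|F_0\|$ and a two-sided bound on $\log\|F_0\circ A^{-1}\|$. The one inaccuracy is your explanation of the $\tfrac{1}{N}\log^+|N!|$ term in $c_5$: it does not come from any adjugate computation, but is simply padding inserted so that $c_5+\log\|A^{-1}\|\geq 0$ for all $A\in\SL_N(K)$ (used to conclude $\mu(D)\geq 0$ and hence apply the last clause of Lemma~\ref{lem:mulambda}); the pullback estimate alone already gives the tighter bound $\mu((M_A)_*D)\geq \mu(D)-\log\|A^{-1}\|-\log^+|N|-N\log^+|2|$.
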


\begin{proof}
Note that $L_*=(L^{-1})^*$, and that
\[L^{-1}=\begin{pmatrix}
A^{-1} & -A^{-1}\mathbf{b}\\ \mathbf{0} & 1	
\end{pmatrix}=\begin{pmatrix}
A^{-1} &\mathbf{0}\\ \mathbf{0} & 1	
\end{pmatrix}\begin{pmatrix}
I & -\mathbf{b}\\ \mathbf{0} & 1	
\end{pmatrix}=: L_0^{-1}T_{-\mathbf{b}}.
\]
So $L_*=(L^{-1})^*=T_{-\mathbf{b}}^*(L_{0}^{-1})^*$.

First, note that $\mu((L_0^{-1})^* D)$ can be estimated as follows. If $D$ is defined by the vanishing of $F=\sum_{i=0}^{\deg(F)}F_iX_{N+1}^i$, then $(L_0^{-1})^*D$ is defined by the vanishing of $\sum_{i=0}^{\deg(F)} X_{N+1}^iF_i\circ A^{-1}$. From~\eqref{eq:basicpullback}, we have for any $0\leq k<\deg(D)$
\begin{eqnarray*} 		
\log|F_{\deg(D)}|-\log\|F_k\circ A^{-1}\| & \geq & \log|F_{\deg(D)}|-\log\|F_k\|-\deg(F_k)\Big(\log\|A^{-1}\|\\&&+\log^+|N|+\log^+|2|\Big)-(N-1)\log^+|2|	\\
&\geq&(\deg(D)-k)\mu(D)-\deg(F_k)(\log\|A^{-1}\|\\&&+\log^+|N|+N\log^+|2|)\\
&\geq&(\deg(D)-k)(\mu(D)-c_5-\log\|A^{-1}\|),
\end{eqnarray*}
(noting that this is trivially true if $F_k=0$) whence \[\mu((L_0^{-1})^*D)\geq \mu(D)-c_5-\log\|A^{-1}\|.\]
Combined with~\eqref{eq:mulower}
this gives
\begin{equation*}
\mu((L_0^{-1})^*D)\geq \mu(D)-c_5-\log\|A^{-1}\|
\geq  \log^+\|\mathbf{b}\|+c_3+2\log^+|\deg(D)|,
\end{equation*}
and so by Lemma~\ref{lem:tc} we have
\begin{multline}\label{eq:super}\mu(L_*D)=\mu(T_{-\mathbf{b}}^*(L_0^{-1})^* D)\geq \mu((L_0^{-1})^* D)-\log^+|\deg(D)|-\log^+|2|\\\geq \mu(D)-\log^+|\deg(D)|-\log^+|2|-c_5-\log\|A^{-1}\|\geq 0\end{multline}
proving~\eqref{eq:mustar}.

Since $A^{-1}\in \SL_N(K)$ we have
\[c_5+\log\|A^{-1}\|\geq c_5-\frac{1}{N}\log^+|N!|\geq 0,\]
and so  by~\eqref{eq:mulower} we have $\mu(D)\geq 0$, hence $\lambda(D)=\log|F_{\deg(D)}|-\log\|F_0\|$.  Similarly, by the computations giving~\eqref{eq:super} we have
\begin{eqnarray*}
\lambda(L_* D)&= & \log|F_{\deg(D)}|-\log\|F_0\circ A^{-1}\|\\
&\geq & \log|F_{\deg(D)}| - \log\|F_0\| -\deg(D)\log\|A^{-1}\|- \deg(D)\log^+|2N|\\&&-N\log^+|2|\\
&=&\lambda(D)-\deg(D)(\log\|A^{-1}\|+\log^+|2N|)-N\log^+|2|.
\end{eqnarray*}
Also, 
\begin{eqnarray*}
\lambda(L_* D)&= & \log|F_{\deg(D)}|-\log\|F_0\circ A^{-1}\|\\
&\leq & \log|F_{\deg(D)}| - \log\|F_0\| +\deg(D)\log\|A\|+ \deg(D)\log^+|2N|\\&&+N\log^+|2|\\
&=&\lambda(D)+\deg(D)(\log\|A\|+\log^+|2N|)+N\log^+|2|.
\end{eqnarray*}

\end{proof}

\section{The relative rate of escape}\label{sec:relesc}

We continue in the context of the last section. That is, $K$ is an algebraically closed field, complete with respect to some absolute value $|\cdot|$.

Let $f$ be as in~\eqref{eq:form}, and let $D$ be an effective divisor not containing $H$. 
We set
\begin{equation}\label{eq:greens}\Delta_{f}(D)=\lim_{k\to\infty} \frac{\lambda\left(f^k_*D\right)}{d^{kN}},\end{equation}
whenever this limit exists, but we will prove that it always does (subject to the constraints above).

\begin{lemma}\label{lem:basicdelta}
	The limit in~\eqref{eq:greens} exists, is non-negative for effective divisors $D$, and we have
	\[\Delta_f(f_*D)=d^{N}\Delta_f(D),\]
	\[\Delta_f(f^*D)=\Delta_f(D),\]
	and
	\[\Delta_f(D+E)=\Delta_f(D)+\Delta_f(E),\]
	as well as
\begin{multline}\label{eq:deltaineq}-\frac{\deg(D)}{d-1}\Big(\log\|L\|-\log\|A\|+\log^+|4N(N+1)|\\
+\lambda(A)+4Nd\log^+|2|\Big)-\left(\frac{2N-1}{d^N-1}\right)\log^+|2|\\\leq \Delta_f(D)-\lambda(D)\\\leq \frac{\deg(D)}{d-1}\Big(\log\|L\|-\log\|A\|+\log^+|4N(N+1)|\\
+\lambda(L)+4Nd\log^+|2|\Big)+\left(\frac{2N-1}{d^N-1}\right)\log^+|2|.
 \end{multline}
	Furthermore, if $D$ is preperiodic for $f$, then $\Delta_f(D)=0$.
\end{lemma}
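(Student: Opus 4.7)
The plan is to reduce everything to a single one-step telescoping estimate. Decompose $f=L\circ\phi$, so $f_*=L_*\phi_*$. Applying the pushforward estimate for $\phi$ from Lemma~\ref{lem:pushpullpower} to a divisor $E$, followed by the pushforward estimate for $L$ from Lemma~\ref{lem:pushpulllinear} to $\phi_*E$ (which has degree $d^{N-1}\deg(E)$), one obtains a one-step bound of the shape
\[-\alpha^{-}\deg(E)-c_1 \;\leq\; \lambda(f_*E)-d^{N}\lambda(E) \;\leq\; \alpha^{+}\deg(E)+c_1,\]
with $\alpha^{\pm}=d^N\bigl(\log\|L\|-\log\|A\|+c_2+4Nd\log^{+}|2|\bigr)/d$ plus $\lambda(A)$ or $\lambda(L)$ in the obvious places, and $c_1,c_2$ the constants from Lemma~\ref{lem:pushpulllinear}. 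This is just the combination of Lemmas~\ref{lem:pushpullpower} and~\ref{lem:pushpulllinear}, and is the only piece of real content.

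Next iterate: set $E=f_*^kD$, whose degree is $d^{k(N-1)}\deg(D)$, and divide the one-step bound by $d^{(k+1)N}$. Because $N+k(N-1)-(k+1)N=-k$, the normalized one-step discrepancy telescopes as
\[\left|\tfrac{\lambda(f_*^{k+1}D)}{d^{(k+1)N}}-\tfrac{\lambda(f_*^{k}D)}{d^{kN}}\right| \;\leq\; 4N\log^{+}|2|\deg(D)\,d^{-k}+M\deg(D)\,d^{-(k+1)}+c_1 d^{-(k+1)N},\]
where $M$ is the appropriate linear combination of $\log\|L\|-\log\|A\|$, $c_2$, and $\lambda(L)$ (respectively $\lambda(A)$). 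This is summable in $k$, so $\{\lambda(f_*^kD)/d^{kN}\}$ is Cauchy, which proves the limit $\Delta_f(D)$ exists. Summing the three geometric series from $k=0$ yields the factors $d/(d-1)$, $1/(d-1)$, and $1/(d^N-1)$, which assemble into exactly the constants in~\eqref{eq:deltaineq}; non-negativity is immediate because $\lambda\geq 0$ on effective divisors.

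For the functorial properties, $\Delta_f(f_*D)=d^N\Delta_f(D)$ is a reindexing of the defining limit. For the pullback identity, use that $f$ is a finite morphism of degree $d^N$, so $f_*f^*D=d^N D$; combined with the estimate $|\lambda(nE)-n\lambda(E)|\leq 4Nn\deg(E)\log^{+}|2|$ obtained from~\eqref{eq:lambdaineq} of Lemma~\ref{lem:mulambda} applied to $n$ copies of $E$, the normalized error vanishes and we recover $\Delta_f(f^*D)=\Delta_f(D)$. Additivity follows from the linearity of $f_*$ on cycles together with~\eqref{eq:lambdaineq}: the error $|\lambda(f_*^kD+f_*^kE)-\lambda(f_*^kD)-\lambda(f_*^kE)|$ is $O(d^{k(N-1)}(\deg D+\deg E))$, which is $O(d^{-k})$ after division by $d^{kN}$. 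Finally, if $D$ is preperiodic under $f_*$ then $\{\lambda(f_*^kD)\}_{k\geq 0}$ takes only finitely many values, and in particular is bounded, so $\lambda(f_*^kD)/d^{kN}\to 0$.

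The main obstacle, if any, is clerical: keeping the asymmetry in Lemma~\ref{lem:pushpulllinear} (with $\lambda(L)$ on the upper side and $\lambda(A)$ on the lower side) straight while aggregating all the constants so that they match~\eqref{eq:deltaineq} on the nose. None of the individual steps is delicate.
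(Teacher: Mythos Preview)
Your approach to existence, the explicit inequality~\eqref{eq:deltaineq}, the functorial identities, and additivity is the same as the paper's, and the bookkeeping you sketch does assemble into the stated constants.

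The preperiodic step, however, has a genuine gap. You claim that if $D$ is preperiodic then $\{\lambda(f_*^kD)\}_{k\geq 0}$ takes only finitely many values. This is false for $N\geq 2$: preperiodicity of $D$ means the \emph{supports} of the $f_*^kD$ eventually cycle, but $f_*$ multiplies degree by $d^{N-1}$, so the multiplicities (and with them $\lambda$) blow up. Concretely, if $D$ is irreducible and $f_*^{n+k}D$ and $f_*^nD$ have the same support, then comparing degrees forces
\[
f_*^{n+k}D=d^{(N-1)k}f_*^nD
\]
as divisors, so $\lambda(f_*^{n+k}D)\approx d^{(N-1)k}\lambda(f_*^nD)$ is typically unbounded. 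The paper's fix is to use this equality together with the already-established linearity and the pushforward identity: apply $\Delta_f$ to both sides to get $d^{(n+k)N}\Delta_f(D)=d^{(N-1)k+nN}\Delta_f(D)$, which forces $\Delta_f(D)=0$. Your argument is salvageable only in the case $N=1$.
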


\begin{proof}
We will first show that the limit exists, for which we apply Lemmas~\ref{lem:pushpullpower} and~\ref{lem:pushpulllinear}. Specifically, 
\begin{eqnarray*}
\left|\frac{\lambda(f_*D)}{d^{N}}-\lambda(D)\right|&\leq & \left|\frac{\lambda(L_*\phi_*D)}{d^{N}}-\frac{\lambda(\phi_*D)}{d^N}\right|+\left|\frac{\lambda(\phi_*D)}{d^N}-\lambda(D)\right|\\
&\leq & d^{-N}\deg(\phi_*D)(\log\|L\|-\log\|A\|+\lambda(L)+\lambda(A)+c_2)\\&&+d^{-N}c_1+4N\deg(D)\log^+|2|\\
&=&\deg(D)\Big(d^{-1}\left(\log\|L\|-\log\|A\| 
+\lambda(L)+\lambda(A)+c_2\right)\\&&+4N\log^+|2|\Big)+d^{-N}c_1.
\end{eqnarray*}
Since $\deg(f_*D)=d^{N-1}\deg(D)$, a standard telescoping sum argument gives
\begin{eqnarray*}
\left|\frac{\lambda(f_*^kD)}{d^{Nk}}-\lambda(D)\right|	&\leq&\sum_{j=0}^{k-1}\left|\frac{\lambda(f_*^{j+1}D)}{d^{N(j+1)}}-\frac{\lambda(f_*^jD)}{d^{Nj}}\right|\\
&\leq & \sum_{j=0}^{k-1}d^{-Nj}\deg(f_*^{j}D)\Big(d^{-1}\Big(\log\|L\|-\log\|A\|\\&&+\lambda(L)
+\lambda(A)+c_2\Big)+4N\log^+|2|\Big)\\&& +c_1\sum_{j=0}^{k-1}d^{-N(j+1)}\\
&= & \left(\frac{1-d^{-k}}{1-d^{-1}}\right)\deg(D)\Big(d^{-1}(\log\|L\|-\log\|A\|+\lambda(L)\\&& 
+\lambda(A))+c_2\Big)\\&& +\left(\frac{1-d^{-Nk}}{d^N-1}\right)c_1.
\end{eqnarray*}
The difference $d^{-Nk}\lambda(f_*^kD)-\lambda(D)$ is thus the partial sum of an absolutely convergent series, and hence the limit in~\eqref{eq:greens} exists. The above calculation, with slightly more care to distinguish the terms in the upper and lower bounds, and with $k\to\infty$, now gives~\eqref{eq:deltaineq}.

For linearity, note that in the non-archimedean case $\lambda(D+E)=\lambda(D)+\lambda(E)$ by the Gau\ss\ lemma, and so $\Delta_f$ is linear as well. In the archimedean case, note that Lemma~\ref{lem:mulambda} gives
\begin{eqnarray*}
\Delta_f(D+E)&=&\lim_{k\to\infty}\frac{\lambda(f_*^k D+f_*^kE)}{d^{Nk}}\\
	&=&\lim_{k\to\infty}\frac{\lambda(f_*^k D)+\lambda(f_*^kE)+O(\deg(f_*^k D+f_*^kE))}{d^{Nk}}\\
	&=&\left(\lim_{k\to\infty}\frac{\lambda(f_*^k D)}{d^{Nk}}+\lim_{k\to\infty}\frac{\lambda(f_*^k E)}{d^{Nk}}+\lim_{k\to\infty}\frac{d^{(N-1)k}O(\deg(D+E))}{d^{Nk}}\right)\\
	&=&\Delta_f(D)+\Delta_f(E).
\end{eqnarray*}
We have $\lambda(D)\geq 0$, for $D$ effective, and so $\Delta_f(D)\geq 0$. 

 The formula $\Delta_f(f_*D)=d^N\Delta_f(D)$ follows immediately from the definition and, now that we have linearity, we can compute
\[\Delta_f(D)=d^{-N}\Delta_f(d^ND)=d^{-N}\Delta_f(f_*f^*D)=\Delta_f(f^*D).\]

For the final claim, suppose that $D$ is preperiodic and, without loss of generality, irreducible. Then for some $n\geq 0$ and $k\geq 1$, the divisors $f_*^{n+k}D$ and $f_*^nD$ are supported on the same irreducible hypersurface.
Comparing degrees, we have
\[f_*^{n+k}D=d^{(N-1)k}f_*^nD.\]
That in turn gives
 \[d^{(k+n)N}\Delta_f(D)=\Delta_f(f_*^{n+k}D)=\Delta_f(d^{(N-1)k}f_*^nD)=d^{(N-1)k+Nn}\Delta_f(D),\]
 by linearity, and so $\Delta_f(D)=0$.
\end{proof}

\begin{remark} 
In~\cite{mincrit} we defined a homogeneous escape rate $G_{F}(\Phi)$ for homogeneous forms $\Phi$ and affine maps $F(\mathbf{X})=A\mathbf{X}^d$. If we choose a lift $F$ for $f$, if $F_h$ is the homogenous part of $F$ (that is, with $\mathbf{b}$ replaced by $\mathbf{0}$), and $\Phi_h=\Phi(X_0, ..., X_{N-1}, 0)$, then we can check from the properties in Lemma~\ref{lem:basicdelta} and \cite[Lemma~10]{mincrit} that for $D$  defined by $\Phi=0$, we have
\[\Delta_f(D)=G_F(\Phi)-G_{F_h}(\Phi_h).\]
One virtue of the function $\Delta_f$ is that it does not depend on choosing models of $f$ and $D$.
\end{remark}

\begin{remark}\label{rem:integrals} 
In some sense it is more natural, in the case $K=\CC$, to work in terms of \[\lambda_{\mathrm{m}}(D)=\int \log\left|\frac{F(X_1, ..., X_{N+1})}{F(X_1, ..., X_{N}, 0)}\right|d\mu(\mathbf{X}),\]	
where $\mu$ is normalized Haar measure on the appropriate power of the unit circle, instead of $\lambda$ as defined above, naively in terms of the coefficients of a defining form. As noted in Lemma~\ref{lem:mult}, using inequality~\eqref{eq:mahler} (due to Mahler~\cite{mahler}), we have \[\lambda(D)=\lambda_{\mathrm{m}}(D)+O(\deg(D)),\]
with the implied constant depending only on $N$ and $d$. It then follows that, for fixed $D$,
\[\lambda(f_*^k D)=\lambda_{\mathrm{m}}(f_*^k D)+O_{d, N, D}(d^{k(N-1)}).\]
So the limit~\eqref{eq:greens} using either $\lambda_{\mathrm{m}}$ or $\lambda$ defines the same function $\Delta_f$.

Along similar lines, still over $\CC$, we can easily check, \emph{post hoc}, that
\[\Delta_f(D)=\int\log\left|\frac{F(X_1, ..., X_{N+1})}{F(X_1, ..., X_{N}, 0)}\right|d\mu_f(\mathbf{X})\] 
for any homogeneous form $F$ defining $D$, where $\mu_f$ is the invariant measure associated to $f$ (see, e.g.,~\cite[Lemma~11]{mincrit}). From this and~\cite[Theorem 3.2]{bj} we have
\begin{equation}\label{eq:lyapdef}\Delta_f(C_f)=L(f)-L(f|_H)-\log d,\end{equation}
from which we derive our main results over $\CC$.
\end{remark}

Lemma~\ref{lem:basicdelta} gives an estimate of the form
\[\Delta_f(D)=\lambda(D)+O(\deg(D)),\]
where the implied constant is explicit, but depends on $L$.
The next lemma shows that, once $\mu(D)$ is large enough, we can estimate $\Delta_f(D)$ from below in terms of $\lambda(D)$, with an error term that is much more uniform, depending only on the submatrix $A$. This submatrix represents the restriction of $L$ to the hyperplane at infinity, and so this can be seen as an assertion that all such maps with the same restriction to infinity are, near infinity, very similar (a philosophy which applies in general to regular polynomial endomorphisms).

\begin{lemma}\label{lem:basin} Let 
\[c_8 = \begin{cases}
 	\frac{2(N-1)(d^{N+1}-d+1)}{(d-d^{1/2})} & \text{if $|\cdot|$ is archimedean}\\
 	0 & \text{otherwise},
 \end{cases}
\]  
 suppose that 
 \begin{multline}\label{eq:breq}(d-1)\log^+\|\mathbf{b}\|>c_8(d^{1/2}-1)+c_3+c_5+\log\|A^{-1}\|+d\xi(A)\\+(2dN+1)\log^+|2|+(2N-2+dN(d^N-1))\log^+|d|,\end{multline} 
and suppose further that $D$ is non-zero effective divisor with
\begin{equation}\label{eq:mureq}\mu(D)\geq \log^+\|\mathbf{b}\|+c_8\Big(-1+\deg(D)^{1/2(N-1)}\Big)-\xi(A) .\end{equation}
Then
\[\Delta_f(D)\geq \lambda(D)-\frac{1}{d-1}\deg(D)(\log\|A^{-1}\|+\log^+|2N|)-\frac{N}{d^N-1}\log^+|2|\]	
\end{lemma}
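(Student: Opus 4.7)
The plan is to iterate the one-step pushforward estimates from Lemmas~\ref{lem:pushpullpower} (for $\phi$) and~\ref{lem:keything} (for $L$), then pass to the limit that defines $\Delta_f$. Writing $f = L\circ\phi$ and $D_k = f^k_*D$, with $\deg(D_k) = d^{k(N-1)}\deg(D)$, the two lemmas combine to give at each step
\[\lambda(D_{k+1}) \geq d^N\lambda(D_k) - d^{N-1}\deg(D_k)\bigl(\log\|A^{-1}\|+\log^+|2N|\bigr) - N\log^+|2|,\]
together with a recursion $\mu(D_{k+1}) \geq d\mu(D_k) - (\text{slowly growing error})$. The first inequality implicitly uses, in the archimedean case, that $\lambda(\phi_* E) = d^N\lambda(E)$ holds up to errors which, after dividing by $d^{N(k+1)}$ and summing, are absorbed into the constant $c_8$ in the hypotheses; this is ultimately the statement that the Mahler-measure version $\lambda_{\mathrm{m}}$ of Remark~\ref{rem:integrals} is multiplied exactly by $d^N$ under $\phi_*$.

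The crux is an induction showing that hypothesis~\eqref{eq:mureq} propagates from $D_k$ to $D_{k+1}$, so that Lemma~\ref{lem:keything} remains applicable to $\phi_* D_k$ at every step. The required lower bound from~\eqref{eq:mureq} at step $k$ is of size $\log^+\|\mathbf{b}\| + c_8 d^{k/2}\deg(D)^{1/(2(N-1))} - \xi(A)$, hence grows like $d^{k/2}$; meanwhile, the inductive recursion on $\mu$ forces $\mu(D_k) \sim d^k \mu(D)$ modulo linear-in-$k$ errors coming from the $\log^+$ terms in Lemmas~\ref{lem:pushpullpower} and~\ref{lem:keything}. The gap between the factors $d$ and $d^{1/2}$ is exactly what makes the induction close, and the geometric sum of the one-step errors (ratio $d^{-1/2}$) produces the factor $1/(d-d^{1/2})$ appearing in the definition of $c_8$. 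The base case $k=0$ is ensured by~\eqref{eq:breq}, which is tuned to put $\mu(D)$, and then $\mu(\phi_* D)$, above the threshold~\eqref{eq:mulower} required by Lemma~\ref{lem:keything}.

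Granted the induction, the conclusion follows by dividing the $\lambda$-inequality by $d^{N(k+1)}$, using $\deg(D_k)/d^{Nk} = d^{-k}\deg(D)$, and telescoping via $\sum_{k\geq 0} d^{-(k+1)} = 1/(d-1)$ and $\sum_{k\geq 0} d^{-N(k+1)} = 1/(d^N-1)$, yielding
\[\Delta_f(D) - \lambda(D) \geq -\tfrac{\deg(D)}{d-1}\bigl(\log\|A^{-1}\| + \log^+|2N|\bigr) - \tfrac{N}{d^N-1}\log^+|2|.\]
The main obstacle is the bookkeeping in the $\mu$-induction: one must choose $c_8$ and calibrate the precise form of~\eqref{eq:breq} so that the base case holds with enough slack, the recursion propagates at every step, and all auxiliary $\log^+$ constants are absorbed without contaminating the clean final bound on $\Delta_f(D) - \lambda(D)$.
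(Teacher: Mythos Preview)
Your approach is the paper's: show that the set of divisors satisfying~\eqref{eq:mureq} is closed under $f_*$ (your ``$\mu$-induction''), apply Lemma~\ref{lem:keything} to $\phi_*D_k$ to obtain the one-step $\lambda$-bound, and telescope. You correctly identify that the gap between the growth rate $d$ (for $\mu(D_k)$) and $d^{1/2}$ (for the threshold in~\eqref{eq:mureq}, via $\deg(D_k)^{1/2(N-1)}$) is what makes the induction close, and that $c_8$ is calibrated precisely for this; the paper makes this step explicit by introducing two auxiliary functions $\psi,\omega$ on $[1,\infty)$, chosen so that $\psi,\omega\geq 0$ exactly when $c_8$ has the stated value.

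One point is off. You claim the archimedean discrepancy between $\lambda(\phi_*E)$ and $d^N\lambda(E)$ is ``absorbed into the constant $c_8$ in the hypotheses''. But $c_8$ appears only in the hypotheses~\eqref{eq:breq} and~\eqref{eq:mureq}, never in the conclusion; any error term in the $\lambda$-recursion survives the telescoping and would contribute an additional $O(\deg(D))$ term to the final bound on $\Delta_f(D)-\lambda(D)$, which is not present in the statement. The paper does not route this through $c_8$ or through the Mahler measure: it simply writes $\lambda(\phi_*D)=d^N\lambda(D)$. In the non-archimedean case this is exact (the $\log^+|2|$ error in Lemma~\ref{lem:pushpullpower} vanishes); in the archimedean case the paper's intent is to use the simplified formula $\lambda(D)=\log|F_{\deg(D)}|-\log\|F_0\|$ from the last clause of Lemma~\ref{lem:mulambda} (valid once $\mu\geq 0$, which the induction guarantees for both $D$ and $\phi_*D$), together with the explicit identification of the top and bottom coefficients of the defining form of $\phi_*D$, rather than the cruder estimate of Lemma~\ref{lem:pushpullpower}.
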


\begin{proof}
Let $S$ be the set of effective divisors of degree at least 1 meeting the condition~\eqref{eq:mureq}, and let (for $x\in\RR^+$)
\[\psi(x)=dc_8(-1+x^{1/2(N-1)})-(d(d^N-1)+2)\log x\]
if $|\cdot|$ is archimedean, $\psi=0$ otherwise.
Note that $\psi(1)=0$, and we have chosen $c_8$ so that $\psi'(x)\geq 0$  for $x\geq 1$, whence $\psi(x)\geq 0$ for all $x\geq 1$.	 Similarly, let
\[\omega(x)=dc_8(-1+x^{1/2(N-1)})-c_8(-1+d^{1/2}x^{1/2(N-1)})-(d^{N+1}-d+1)\log x+c_8(d^{1/2}-1)\]
if $|\cdot|$ is archimedean, and $\omega =0$ otherwise,
and note that $\omega(x)\geq 0$ for $x\geq 1$.

Now, for $D\in S$ not containing the origin, we have from Lemma~\ref{lem:pushpullpower}
\begin{eqnarray*}
\mu(\phi_*D)&\geq & d\mu(D)-d(d^N-1)\log^+|\deg(D)|-2dN\log^+|2|-dN(d^N-1)\log^+|d|\\
&\geq &d\log^+\|\mathbf{b}\|+dc_8(-1+\deg(D)^{1/2(N-1)})-d\xi(A) \\&&-d(d^N-1)\log^+|\deg(D)|-2dN\log^+|2|-dN(d^N-1)\log^+|d|\\
&=&d\log^+\|\mathbf{b}\|+\psi(\deg(D))+2\log^+|\deg(D)|-d\xi(A)\\&&-2dN\log^+|2|-dN(d^N-1)\log^+|d|\\
&\geq&d\log^+\|\mathbf{b}\|+2\log^+|d^{N-1}\deg(D)|-2\log^+|d^{N-1}|-d\xi(A)\\&&-2dN\log^+|2|-dN(d^N-1)\log^+|d|\\
&=&\log^+\|\mathbf{b}\|+c_3+c_5+\log\|A^{-1}\|+2\log^+|\deg(\phi_*D)|\\&&+(d-1)\log^+\|\mathbf{b}\|-c_3-c_5-\log\|A^{-1}\|-d\xi(A)\\&&-(2N-2+dN(d^N-1))\log^+|d|-2dN\log^+|2|\\
&>&\log^+\|\mathbf{b}\|+c_3+c_5+\log\|A^{-1}\|+2\log^+|\deg(\phi_*D)|
\end{eqnarray*}
given~\eqref{eq:breq}.
It follows from this and Lemma~\ref{lem:keything} that
\begin{eqnarray*}
\mu(f_*D)&=&\mu(L_*\phi_*D)\\
&\geq&\mu(\phi_*D)-\log^+|\deg(\phi_*D)|-\log^+|2|-c_5-\log\|A^{-1}\|\\ 
&=&	 d\mu(D)-(d^{N+1}-d+1)\log^+|\deg(D)|-(N-1)\log^+|d|\\&&-(2dN+1)\log^+|2|-dN(d^N-1)\log^+|d|-c_5-\log\|A^{-1}\|\\
&\geq &d\log^+\|\mathbf{b}\|+dc_8(-1+\deg(D)^{1/2(N-1)})-d\xi(A)\\&&-(d^{N+1}-d+1)\log^+|\deg(D)|-(2dN+1)\log^+|2|\\&&-\left(dN(d^N-1)+(N-1)\right)\log^+|d|-c_5-\log\|A^{-1}\|\\
&= & \log^+\|\mathbf{b}\|+c_8(-1+\deg(\phi_*D)^{1/2(N-1)})-\xi(A)+\omega(\deg(D))\\&&-c_8(d^{1/2}-1)+(d-1)\log^+\|\mathbf{b}\|-(d-1)\xi(A)-(2dN+1)\log^+|2|\\&&-\left(dN(d^N-1)+(N-1)\right)\log^+|d|-c_5-\log\|A^{-1}\|\\
&>&\log^+\|\mathbf{b}\|+c_8(-1+\deg(\phi_*D)^{1/2(N-1)})-\xi(A)
\end{eqnarray*}
since $\omega(\deg(D))\geq 0$ and 
\begin{multline*}
(d-1)\log^+\|\mathbf{b}\|\geq c_8(d^{1/2}-1)+(d-1)\xi(A)+(2dN+1)\log^+|2|\\+\left(dN(d^N-1)+(N-1)\right)\log^+|d|+c_5+\log\|A^{-1}\|.	
\end{multline*}
In other words, $S$ is closed under the action of $f_*$.

On the other hand, since $\phi_*D$ satisfies the hypotheses of Lemma~\ref{lem:keything}, we also have
\begin{eqnarray*}
\lambda(f_*D)&=&\lambda(L_* \phi_* D)\\
&\geq &\lambda(\phi_*D)-\deg(\phi_*D)(\log\|A^{-1}\|+\log^+|2N|)-N\log^+|2|	\\
&=&d^N\lambda(D)-d^{N-1}\deg(D)(\log\|A^{-1}\|+\log^+|2N|)-N\log^+|2|
\end{eqnarray*}
As $S$ is closed under $f_*$, we can iterate this, giving
\[\frac{\lambda(f_*^kD)}{d^{kN}}\geq \lambda(D)-\frac{1-d^{-k}}{d-1}\deg(D)(\log\|A^{-1}\|+\log^+|2N|)-\frac{N}{d^N-1}\log^+|2|\]
for $D\in S$ and $k\geq 1$, from which the lower bound on $\Delta_f(D)$ follows.
\end{proof}

The next lemma, a lower bound on the relative escape rate of the critical divisor of $f$, is the main ingredient in the results of this paper.

\begin{lemma}\label{lem:localcritlower}
Let $f$ be as in~\eqref{eq:form}, let $C_f=\sum_{i=1}^N(d-1)H_i$ be the finite part of the critical divisor, and let
\begin{multline*}
	c_9=\max\Bigg\{0, \frac{1}{d-1}\log^+|2N|+\frac{N}{d^N-1}\log^+|2|-\frac{1}{N(d-1)}\log^+|N!|, \\	\frac{c_8(d^{1/2}-1)+c_3+c_5+(2dN+1)\log^+|2|+(2N-2+dN(d^N-1))\log^+|d|}{d-1}\Bigg\}.\end{multline*}
Then
\[\Delta_f(C_f)\geq \frac{d-1}{d}\log^+\|\mathbf{b}\|-\frac{1}{Nd}\lambda(A^{-1})-\xi(A)-\frac{d-1}{d}c_9.\]	
\end{lemma}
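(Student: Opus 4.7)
My plan is to reduce the claim to a single application of Lemma~\ref{lem:basin} on a carefully chosen degree-one divisor. Since $A\in\SL_N$, a direct computation gives $\det Df = d^N\prod_{i=1}^N X_i^{d-1}$, so the finite critical divisor decomposes as $C_f=(d-1)\sum_{i=1}^N H_i$ with $H_i=\{X_i=0\}$. The power map $\phi$ restricts on each $H_i$ to the $d$th-power map of $\PP^{N-1}$, so $\phi_*H_i=d^{N-1}H_i$, and since $L$ is an isomorphism $f_*H_i=d^{N-1}L(H_i)$. Combining the scaling identity $\Delta_f(f_*D)=d^N\Delta_f(D)$ with linearity and non-negativity of $\Delta_f$ from Lemma~\ref{lem:basicdelta} will yield
\[\Delta_f(C_f)=\frac{d-1}{d}\sum_{i=1}^N\Delta_f(L(H_i))\geq\frac{d-1}{d}\Delta_f(L(H_{i^*}))\]
for any single choice of $i^*\in\{1,\dots,N\}$.

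The key step is choosing $i^*$ so that $|(A^{-1}\mathbf{b})_{i^*}|=\|A^{-1}\mathbf{b}\|$. The hyperplane $L(H_{i^*})$ is cut out by the linear form $\sum_j (A^{-1})_{i^*,j}Y_j-(A^{-1}\mathbf{b})_{i^*}Y_{N+1}$, so as a degree-one divisor
\[\mu(L(H_{i^*}))=\lambda(L(H_{i^*}))=\log^+\frac{|(A^{-1}\mathbf{b})_{i^*}|}{\max_j|(A^{-1})_{i^*,j}|}.\]
The unified bound $\|\mathbf{b}\|=\|A(A^{-1}\mathbf{b})\|\leq \max(1,|N|)\|A\|\cdot\|A^{-1}\mathbf{b}\|$ gives $\log|(A^{-1}\mathbf{b})_{i^*}|\geq\log\|\mathbf{b}\|-\log\|A\|-\log^+|N|$, and since $\max_j|(A^{-1})_{i^*,j}|\leq\|A^{-1}\|$, we obtain $\mu(L(H_{i^*}))\geq\log\|\mathbf{b}\|-\xi(A)$.

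I now split into two cases. When $\log^+\|\mathbf{b}\|$ is large enough that~\eqref{eq:breq} holds, the lower bound on $\mu$ just derived verifies~\eqref{eq:mureq} for $D=L(H_{i^*})$ (whose degree one makes the $c_8$-contribution trivial), so Lemma~\ref{lem:basin} applies and gives
\[\Delta_f(L(H_{i^*}))\geq\lambda(L(H_{i^*}))-\tfrac{1}{d-1}(\log\|A^{-1}\|+\log^+|2N|)-\tfrac{N}{d^N-1}\log^+|2|.\]
Substituting the $\lambda$ bound and multiplying by $(d-1)/d$ recovers the claim, with residual constants absorbed by the second maximand in $c_9$. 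When~\eqref{eq:breq} fails, $\log^+\|\mathbf{b}\|$ is bounded above by the right-hand side of~\eqref{eq:breq} divided by $d-1$; a short calculation shows that the $\log\|A^{-1}\|$ and $d\xi(A)$ terms in that threshold are exactly compensated by $-\tfrac{1}{Nd}\lambda(A^{-1})-\xi(A)$ (using the identity $\tfrac{1}{d}\log\|A^{-1}\|-\tfrac{1}{Nd}\lambda(A^{-1})=-\tfrac{1}{Nd}\log^+|N!|$), while the $A$-independent part of the threshold is precisely what the third maximand in $c_9$ supplies. In this case the right-hand side of the claim is non-positive, and the inequality follows from $\Delta_f(C_f)\geq 0$.

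The main obstacle is pure bookkeeping: making sure the constants from Lemma~\ref{lem:basin}, the $A$-dependent threshold in~\eqref{eq:breq}, and the arithmetic of $\lambda(A^{-1})$ versus $\log\|A^{-1}\|$ all align to give an $A$-independent $c_9$ matching the three-term maximum in the statement, and that the transition between the two cases occurs at exactly the value of $\log^+\|\mathbf{b}\|$ where both arguments yield the same conclusion.
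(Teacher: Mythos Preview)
Your argument is correct and mirrors the paper's proof essentially step for step: the paper too chooses the branch hyperplane $B_i=L_*H_i$ with largest $|(A^{-1}\mathbf{b})_i|$, bounds $\mu(B_i)\geq\log\|\mathbf{b}\|-\xi(A)$ via $\|\mathbf{b}\|\leq|N|\,\|A\|\,\|A^{-1}\mathbf{b}\|$, applies Lemma~\ref{lem:basin} when~\eqref{eq:breq} holds, and falls back on $\Delta_f(C_f)\geq 0$ otherwise, with the same constant bookkeeping. One minor slip: for a degree-one divisor $\mu(D)=\log(|F_1|/\|F_0\|)$ without the ${}^+$, so your displayed equality $\mu=\lambda$ is only valid once $\mu\geq 0$; but this does not affect the proof, since the lower bound you actually use, $\mu(L(H_{i^*}))\geq\log\|\mathbf{b}\|-\xi(A)$, holds regardless, and $\lambda\geq\mu$ gives the same bound for $\lambda$.
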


\begin{proof}
First, note that $\Delta_f(C_f)$, $\lambda(A)$, $\xi(A)$, and $c_9$ are all non-negative, and so our conclusion holds trivially if $\log^+\|\mathbf{b}\|=0$. We will, therefore, assume throughout that $\log^+\|\mathbf{b}\|=\log\|\mathbf{b}\|>0$.
 
Let $B_i=L_* H_i=(L^{-1})^*H_i$, noting that the finite part of the branch locus of $f$ is supported exactly on the $B_i$.
The hyperplanes $B_i$, for $1\leq i\leq N$ are defined by the linear forms $g_i$ whose coefficients make up the first $N$ rows of $L^{-1}$, which we recall is given by
\[L^{-1}=\begin{pmatrix}A^{-1} & -A^{-1}\mathbf{b} \\ \mathbf{0}  & 1	 \end{pmatrix}.\]
Now, note that
\[\log\|\mathbf{b}\|= \log\|AA^{-1}\mathbf{b}\|\leq \log\|A^{-1}\mathbf{b}\|+\log\|A\|+\log^+|N|,\]
and so there is some $i$ such that the $i$th entry $b_i'$ of $\mathbf{b}'=-A^{-1}\mathbf{b}$ satisfies
\begin{eqnarray*}
\log\|g_i\|
&\geq & \log|b_i'|\\&\geq& \log\|\mathbf{b}\|-\log\|A\|-\log^+|N|.
\end{eqnarray*}
On the other hand,
$\log\|g_i|_H\|\leq \log\|A^{-1}\|$,
and so we have
\[\lambda(B_i)\geq \log\|\mathbf{b}\|-\log\|A\|-\log\|A^{-1}\|-\log^+|N|=\log\|\mathbf{b}\|-\xi(A).\]
Note also $b_i'=g_{i, \deg(g_i)}$ in the notation of the definition of $\mu$, 
 and so we also have
\begin{eqnarray*}
\mu(B_i)&\geq& \log\|\mathbf{b}\|-\xi(A)\\&=&\log\|\mathbf{b}\|-\xi(A)+c_8\Big(-1+\deg(B_i)^{1/2(N-1)}\Big)	
\end{eqnarray*}
given that $\deg(B_i)=1$. Evidently, condition~\eqref{eq:mureq} in Lemma~\ref{lem:basin} is met, and so if~\eqref{eq:breq} is satisfied as well, then we have 
\begin{multline*}
\Delta_f(B_i)\geq \log\|\mathbf{b}\|-\frac{1}{N(d-1)}\lambda(A^{-1})+\frac{1}{N(d-1)}\log^+|N!|-\xi(A)\\-\frac{1}{d-1}\log^+|2N| -\frac{N}{d^N-1}\log^+|2|,	
\end{multline*}
which is stronger than
\begin{equation}
\label{eq:this thing here}
\Delta_f(B_i)\geq \log\|\mathbf{b}\|-\frac{1}{N(d-1)}\lambda(A^{-1})-\frac{d}{d-1}\xi(A)-c_9.
\end{equation}
If, on the other hand, we fail to meet~\eqref{eq:breq}, then 
\begin{eqnarray*}
(d-1)\log\|\mathbf{b}\|-\frac{1}{N}\lambda(A^{-1})-d\xi(A) 
&\leq &(d-1)\log\|\mathbf{b}\|-\log\|A^{-1}\|-d\xi(A)\\
&\leq& c_8(d^{1/2}-1)+c_3+c_5+(2dN+1)\log^+|2|\\&&+(2N-2+dN(d^N-1))\log^+|d|\\& \leq& (d-1)c_9,\end{eqnarray*}
in which case~\eqref{eq:this thing here} is true simply because $\Delta_f(B_i)\geq 0$.

Inequality~\eqref{eq:this thing here}, combined with the non-negativity of $\Delta_f$, gives
\begin{eqnarray*}
\Delta_f(C_f)&=&\sum_{j=1}^N(d-1)\Delta_f(H_j)\\
&\geq & (d-1)\Delta_f(H_i)\\&=&\frac{d-1}{d^N}\Delta_f(f_*H_i)\\
&=&\frac{d-1}{d^N}\Delta_f(d^{N-1}B_i)\\
&\geq & \frac{d-1}{d}\log^+\|\mathbf{b}\|-\frac{1}{Nd}\lambda(A^{-1})-\xi(A)-\frac{d-1}{d}c_9
\end{eqnarray*}
\end{proof}

Although Lemma~\ref{lem:localcritlower} is the key ingredient in our main results, one might wish to record the corresponding bound in the other direction, an immediate consequence of results already shown.

\begin{lemma}\label{lem:trivial} 
There is a bound of the form
\begin{multline*}\Delta_f(C_f)\leq N(N+2)\log^+\|\mathbf{b}\|+(N+1)\lambda(A)+N\log^+|(N+1)!|+\log^+|N!|\\+N\log^+|4N(N+1)|
+\left(4N^2d+\frac{2N-1}{d^N-1}\right)\log^+|2|.
 \end{multline*}\end{lemma}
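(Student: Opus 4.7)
The plan is to apply the upper half of~\eqref{eq:deltaineq} from Lemma~\ref{lem:basicdelta} directly to $D=C_f$ and then translate the quantities $\log\|L\|$ and $\lambda(L)$ (which depend on the full block matrix $L$) into expressions involving only $\log^+\|\mathbf{b}\|$ and $\lambda(A)$. Since the finite critical divisor is defined by the monomial $F(X_1,\dots,X_{N+1})=X_1^{d-1}\cdots X_N^{d-1}$, the form $F$ has a single coefficient of absolute value $1$, and $F|_H=F$, so $\lambda(C_f)=0$ and $\deg(C_f)=N(d-1)$. With these values, the upper bound in~\eqref{eq:deltaineq} reduces to
\[
\Delta_f(C_f)\leq N\Big(\log\|L\|-\log\|A\|+\log^+|4N(N+1)|+\lambda(L)+4Nd\log^+|2|\Big)+\frac{2N-1}{d^N-1}\log^+|2|.
\]

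Next I would convert $\log\|L\|$ and $\log\|L\|-\log\|A\|$ into bounds in the required terms. From the block form of $L$ one has $\|L\|=\max(\|A\|,\|\mathbf{b}\|,1)$, hence $\log\|L\|\leq \log^+\|A\|+\log^+\|\mathbf{b}\|$, and using the non-negativity of $\lambda(A)$ (which gives both $\log^+\|A\|\leq \lambda(A)/N$ and $\log\|A\|\geq -\log^+|N!|/N$) one obtains
\[
\log\|L\|\leq \log^+\|\mathbf{b}\|+\tfrac{1}{N}\lambda(A),\qquad \log\|L\|-\log\|A\|\leq \log^+\|\mathbf{b}\|+\tfrac{1}{N}\log^+|N!|.
\]
Plugging the first estimate into $\lambda(L)=(N+1)\log\|L\|+\log^+|(N+1)!|$ gives
\[
\lambda(L)\leq (N+1)\log^+\|\mathbf{b}\|+\tfrac{N+1}{N}\lambda(A)+\log^+|(N+1)!|.
\]

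Finally I would substitute both estimates into the above display and collect terms. The coefficient of $\log^+\|\mathbf{b}\|$ becomes $N+N(N+1)=N(N+2)$; the coefficient of $\lambda(A)$ becomes $N\cdot\tfrac{N+1}{N}=N+1$; the constants $\log^+|N!|$, $N\log^+|(N+1)!|$ and $N\log^+|4N(N+1)|$ appear once each; and the $\log^+|2|$ terms consolidate into $(4N^2d+\tfrac{2N-1}{d^N-1})\log^+|2|$. This is precisely the stated bound. The argument is entirely mechanical given the earlier lemmas; the only step that needs a moment's care is the conversion of $\log\|L\|$ back to the submatrix $A$ and the translation $\mathbf{b}$, but this is straightforward using the non-negativity of $\lambda$ built into the N\'eron functions.
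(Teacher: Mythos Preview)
Your proof is correct and follows essentially the same approach as the paper's own argument: apply the upper bound in~\eqref{eq:deltaineq} with $\lambda(C_f)=0$ and $\deg(C_f)=N(d-1)$, then convert $\log\|L\|-\log\|A\|$ and $\lambda(L)$ into expressions in $\log^+\|\mathbf{b}\|$ and $\lambda(A)$ using $\lambda(A)\geq 0$. The only cosmetic difference is that the paper bounds $\log\|L\|$ via $\log\|L\|\leq \log^+\|\mathbf{b}\|+\tfrac{1}{N}\log^+|N!|+\log\|A\|$ before recombining into $\tfrac{N+1}{N}\lambda(A)$, whereas you go directly through $\log^+\|A\|\leq \tfrac{1}{N}\lambda(A)$; the resulting bound on $\lambda(L)$ is identical.
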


\begin{proof}
Note that $C_f$ is defined by the monomial equation
\[X_1^{d-1}\cdots X_N^{d-1}=0,\]
and so $\lambda(C_f)=0$. By Lemma~\ref{lem:basicdelta}, or more precisely~\eqref{eq:deltaineq} therein,
we have
\begin{multline*}\Delta_f(C_f)\leq \frac{\deg(C_f)}{d-1}\Big(\log\|L\|-\log\|A\|+\log^+|4N(N+1)|\\
+\lambda(L)+4Nd\log^+|2|\Big)+\left(\frac{2N-1}{d^N-1}\right)\log^+|2|.
 \end{multline*}
 Now, since  $A\in \SL_N(K)$, we have $-\frac{1}{N}\log^+|N!|\leq \log\|A\|$, and so 
 \[\log\|L\|-\log\|A\|\leq \log^+\|A\|+\log^+\|\mathbf{b}\|-\log\|A\|\leq \log^+\|\mathbf{b}\|+\frac{1}{N}\log^+|N!|.\]
 Also, we have
 \begin{eqnarray*}
 	\lambda(L) & = & (N+1)\log\|L\|+\log^+|(N+1)!|\\
 	&\leq & (N+1)\log^+\|\mathbf{b}\|+\frac{N+1}{N}\log^+|N!|+(N+1)\log\|A\|+\log^+|(N+1)!|\\
 	&=&(N+1)\log^+\|\mathbf{b}\|+\frac{N+1}{N}\lambda(A)+\log^+|(N+1)!|.
 \end{eqnarray*}
 The claim follows, since $\deg(C_f)=N(d-1)$. 
\end{proof}

Thus, with a view to fixing $A$, we have
\[\frac{d-1}{d}\log^+\|\mathbf{b}\|-O_A(1)\leq \Delta(C_f)\leq N(N+2)\log^+\|\mathbf{b}\|+O_A(1),\]
which proves Theorem~\ref{th:proper}. Indeed, the error terms can be made explicit in terms of $\lambda(A)$, $\lambda(A^{-1})$, and $\xi(A)$, all   non-negative functions on $\SL_N(K)$, which are  plurisubharmonic and continuous when $K=\CC$. The following proposition also proves Corollary~\ref{cor:proper}, recalling~\eqref{eq:lyapdef}.

\begin{prop}
In the case $K=\CC$, the map \[\SL_{N+1}(\CC)\times \CC^N\to \SL_{N+1}(\CC)\times \RR\]
by
\[(A, b)\mapsto (A, \Delta_f(C_f))\]
is continuous, plurisubharmonic, and proper. In particular, if 
\[\mathcal{M}=\{(A, \mathbf{b})\in \SL_{N+1}(\CC)\times \CC^N:L(f)=L(f|_H)+\log d\},\]
then the projection $\pi:\mathcal{M}\to \SL_{N+1}(\CC)$ is proper.
\end{prop}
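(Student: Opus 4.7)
My plan is to decompose the proposition into its three asserted properties of $\Psi:(A,\mathbf{b})\mapsto(A,\Delta_f(C_f))$ and to deduce the ``in particular'' statement from properness. By equation~\eqref{eq:lyapdef} we have $\mathcal{M}=\Psi^{-1}(\SL_{N+1}(\CC)\times\{0\})$, so once $\Psi$ is proven proper, for any compact $K_1\subseteq\SL_{N+1}(\CC)$ the set $\pi^{-1}(K_1)\cap\mathcal{M}=\Psi^{-1}(K_1\times\{0\})$ will be compact, giving properness of $\pi|_{\mathcal{M}}$.

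For continuity I plan to use the telescoping bound in the proof of Lemma~\ref{lem:basicdelta}: the quantity $d^{-kN}\lambda(f_*^kC_f)$ converges to $\Delta_f(C_f)$ at a rate controlled by $\log\|L\|$, $\log\|A\|$, $\lambda(L)$, and $\lambda(A)$, all continuous in $(A,\mathbf{b})$ and hence locally bounded. The convergence is therefore uniform on compacta, and each $\lambda(f_*^kC_f)$ is continuous in $(A,\mathbf{b})$ since the coefficients of the defining form of $f_*^kC_f$ depend polynomially on the parameters and never vanish simultaneously. The uniform limit is then continuous.

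For properness, I will take $K\subseteq\SL_{N+1}(\CC)\times\RR$ compact with projections $K_1,K_2$ and invoke Lemma~\ref{lem:localcritlower}:
\[\Delta_f(C_f)\geq\frac{d-1}{d}\log^+\|\mathbf{b}\|-\frac{1}{Nd}\lambda(A^{-1})-\xi(A)-\frac{d-1}{d}c_9.\]
On the compact set $K_1$ the non-negative quantities $\lambda(A^{-1})$ and $\xi(A)$ are bounded, so $A\in K_1$ and $\Delta_f(C_f)\in K_2$ will force $\log^+\|\mathbf{b}\|$ to be bounded, whence $\mathbf{b}$ lies in a compact subset of $\CC^N$. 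Combined with continuity, this makes $\Psi^{-1}(K)$ closed and bounded in $\SL_{N+1}(\CC)\times\CC^N$, hence compact.

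Plurisubharmonicity is the main obstacle. The cleanest route I see is via Remark~\ref{rem:integrals}, which identifies $\Delta_f(C_f)$ with the integral $\int\log|F/F|_H|\,d\mu_f$ against the equilibrium measure of $f$, and equivalently allows one to compute $\Delta_f$ via the Mahler-measure variant $\lambda_m$ in the limit. Each $d^{-kN}\lambda_m(f_*^kC_f)=d^{-kN}\bigl(m(F_k)-m(F_k|_H)\bigr)$ has $m(F_k)$ plurisubharmonic in $(A,\mathbf{b})$, but $m(F_k|_H)$ is only a function of $A$, and the subtraction is not \emph{a priori} compatible with psh in the $A$-directions. The hard part will be to show that the Bedford--Jonsson identity $L(f)-L(f|_H)-\log d=\int G\,d\mu_C$ defines a function genuinely psh on the full parameter space; I expect this to follow from standard pluripotential-theoretic continuity properties of the equilibrium measure $\mu_f$ and the Green function $G_f$ under variation of $f$, but this is where the bulk of the real work will lie.
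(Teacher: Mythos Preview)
Your arguments for continuity and properness coincide with the paper's: uniform-on-compacta convergence of $d^{-kN}\lambda(f^k_*C_f)$ from the telescoping estimate in Lemma~\ref{lem:basicdelta} gives continuity, and Lemma~\ref{lem:localcritlower} bounds $\log^+\|\mathbf{b}\|$ once $A$ ranges over a compact set and $\Delta_f(C_f)$ over a bounded one, giving properness. Your deduction of the properness of $\pi|_\mathcal{M}$ via $\mathcal{M}=\Psi^{-1}(\SL_{N+1}(\CC)\times\{0\})$ is likewise the paper's.

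The only divergence is in plurisubharmonicity. The paper is far more direct than you anticipate: it asserts that each approximant $d^{-kN}\lambda(f^k_*C_f)$ is itself continuous and plurisubharmonic, on the grounds that a defining form for $f^k_*C_f$ can be chosen with coefficients polynomial in the entries of $(A,\mathbf{b})$, and then passes to the uniform limit. Your worry about the subtracted term $\log\|F_k|_H\|$ in the $A$-directions is legitimate (this term depends on $A$ alone, and its negative need not be psh), so one may regard the paper's one-line justification as brisk; but the paper immediately remarks that this part ``can also be accessed'' via Bassanelli--Berteloot~\cite[\S1.4]{bb} and Dinh--Sibony~\cite{ds}, which is precisely the pluripotential-theoretic apparatus you are reaching for through the Green function and the equilibrium measure. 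In other words, you have correctly located where the genuine analytic input lives; the paper simply outsources it to a citation rather than redeveloping it, whereas you flag it as ``the bulk of the real work'' without completing it either. Since plurisubharmonicity plays no role in the ``in particular'' clause (only continuity and properness do), your proof of the statement about $\mathcal{M}$ is already complete as it stands.
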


\begin{proof}
Note that, by~\eqref{eq:lyapdef}, the set $\mathcal{M}$ is equivalently defined as the set of pairs for which $\Delta_f(C_f)=0$.

For any $k\geq 0$, the function
\[(A, \mathbf{b})\mapsto \frac{\lambda(f^k_*C_f)}{d^{kN}}\]
is continuous and plurisubharmonic, since there is a homogeneous form defining $f^k_*C_f$ whose coefficients are polynomials in the entries of $A$ and $\mathbf{b}$. But from Lemma~\ref{lem:basicdelta}, these functions converge uniformly on compact subsets to $\Delta_f(C_f)$, and so $f\mapsto \Delta_f(C_f)$ is continuous and plurisubharmonic. This part of the result can also be accessed by work of Berteloot and Basanelli~\cite[Section~1.4]{bb} (see also~\cite{ds}). 

Now, on any compact $E\subseteq \SL_{N+1}(\CC)\times\RR$ the functions $\lambda(A)$ and $\xi(A)$ are bounded, and so Lemma~\ref{lem:localcritlower} gives, for $(A, \mathbf{b})\in \mathcal{M}$,
\[\log^+\|\mathbf{b}\|\leq \frac{d}{d-1}\Delta_f(C_f)+O_E(1)=O_E(1),\]
for $\Delta_f(C_f)$ in the projection of the second coordinate of $E$. Since $\SL_{N+1}(\CC)\times \CC^N\to \SL_{N+1}(\CC)\times \RR$ is continuous,  it is also proper.
\end{proof}

We end with a generalization of the observation that, if $z^d+c$ is PCF, then $c$ is an algebraic integer. We recall that a morphism $f:\PP^N_K\to\PP^N_K$, with $K$ complete with respect to a non-archimedean absolute value, has good reduction if and only if $f$ extends to a scheme morphism $\overline{f}:\PP^N_R\to \PP^N_R$ over the ring of integers $R$. Equivalently, if we choose homogeneous forms representing $f$, whose coefficients are integral and at not all contained in the maximal ideal, then $f$ has good reduction if and only if the resultant of these homogeneous forms is a unit.

\begin{prop}\label{prop:goodred}
Let $K$ be an algebraically closed field, complete with respect to a non-archimedean absolute value which is not $p$-adic for any $p\leq \max\{d, N!\}$, with ring of integers $R$. Then any PCF map of the form~\eqref{eq:form} with $A\in\SL_N(R)$ has good reduction.
\end{prop}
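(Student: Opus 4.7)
The strategy is to use Lemma~\ref{lem:localcritlower} combined with the vanishing of $\Delta_f(C_f)$ for PCF maps (via Lemma~\ref{lem:basicdelta}) to force $\log^+\|\mathbf{b}\|=0$, whence good reduction will follow.

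First I would reduce good reduction to the integrality condition $\mathbf{b}\in R^N$. Since $\det L=\det A=1$ and the standard homogeneous lift of $\phi$ has unit resultant, the lift of $f=L\circ\phi$ also has unit resultant (its resultant differs from that of $\phi$ by a power of $\det L=1$). Hence $f$ has good reduction precisely when it admits a lift with integral coefficients, not all in $\mathfrak{m}$; given $A\in\SL_N(R)$, this amounts to $\mathbf{b}\in R^N$.

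Next I would verify that the error constants appearing in Lemma~\ref{lem:localcritlower} all vanish under the hypotheses. The stipulation that $|\cdot|$ is not $p$-adic for any $p\leq\max\{d,N!\}$ ensures $|m|=1$ for every positive integer $m$ whose prime divisors are at most $\max\{d,N!\}$; in particular $\log^+|2|=\log^+|N|=\log^+|N!|=\log^+|d|=0$, which causes $c_3$, $c_5$, $c_8$ and hence $c_9$ to vanish. Moreover, $A\in\SL_N(R)$ combined with $|N!|=1$ forces $\|A\|=\|A^{-1}\|=1$: integrality gives $\|A\|,\|A^{-1}\|\leq 1$, while $\det A=1$ together with $|N!|=1$ gives the matching lower bound via the inequality $1=|\det A|\leq N!\|A\|^N$. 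Consequently $\lambda(A)=\lambda(A^{-1})=\xi(A)=0$.

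Finally, since $f$ is PCF, the set $P_f$ is algebraic, so each irreducible component $H_i$ of $C_f$ has finite forward orbit as a hypersurface under $f$ (the orbit being contained in the finite collection of irreducible components of $P_f$). By Lemma~\ref{lem:basicdelta}, $\Delta_f(H_i)=0$ for each $i$, and by linearity $\Delta_f(C_f)=0$. Lemma~\ref{lem:localcritlower} then reduces to
\[0=\Delta_f(C_f)\geq\frac{d-1}{d}\log^+\|\mathbf{b}\|,\]
forcing $\mathbf{b}\in R^N$ as required. The main obstacle I anticipate is the careful bookkeeping to ensure that every error constant in Lemma~\ref{lem:localcritlower} genuinely vanishes under the hypotheses; once this is done, the conclusion is immediate from the chain Lemma~\ref{lem:basicdelta}~$\to$~Lemma~\ref{lem:localcritlower}.
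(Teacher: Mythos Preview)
Your proposal is correct and follows essentially the same route as the paper: reduce good reduction to $\mathbf{b}\in R^N$ via a resultant computation, verify that the error terms in Lemma~\ref{lem:localcritlower} vanish under the hypotheses on the absolute value and on $A$, and then use $\Delta_f(C_f)=0$ for PCF maps to force $\log^+\|\mathbf{b}\|=0$. You are in fact slightly more explicit than the paper in two places---justifying $\Delta_f(C_f)=0$ via preperiodicity of the irreducible components of $C_f$ and linearity (Lemma~\ref{lem:basicdelta}), and arguing that $\|A\|=\|A^{-1}\|=1$ from $A\in\SL_N(R)$ and $|N!|=1$---but the skeleton is identical.
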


Good reduction has various dynamical consequences. For example, if $f:\PP^N\to\PP^N$ has good reduction, and $f(P)=P$, then any eigenvalue $\lambda$ of the action of $f$ on the tangent space at $P$ satisfies $|\lambda|\leq 1$ (so, in a strong sense, periodic points are non-repelling).

\begin{proof}[Proof of Proposition~\ref{prop:goodred}]
For $A\in\SL_N(R)$, we claim that $f$ has good reduction if and only if the entries of $\mathbf{b}$ are integral. To see this, note that if the entries of $\mathbf{b}$ are integral, then the  entries of the matrix $L=\begin{pmatrix}A & \mathbf{b} \\ \mathbf{0} & 1\end{pmatrix}$ are integral, and are the coefficients of some homogeneous forms $F_i(\mathbf{X})=\sum_{j=1}^{N+1} L_{i, j}X_{j-1}^d$ defining $f$. If all entries are in the maximal ideal, then we could not have $\det(A)=1$, and so $f$ has good reduction if and only if the resultant of these forms is a unit. But by \cite[Theorem~3.13, p.~399]{lang} these homogeneous forms have resultant
\[\det(L)^{d^N}=\det(A)^{d^N}=1.\]

If, on the other hand, the entries of $\mathbf{b}$ are non-integral, then for $\pi$ a uniformizer of the absolute value, there is some $\epsilon>0$ so that the entries of $\pi^\epsilon L$ are integral, and at least one is a unit. We have $\det(\pi^\epsilon L)=\pi^{(N+1)\epsilon}$ not a unit, and so by the same argument as above, $f$ has bad reduction.

	Now, given our assumptions on the absolute value, we have
	$\xi(A)=0$ and $\log\|A^{-1}\|=0$, and Lemma~\ref{lem:localcritlower} gives
	\[\Delta_f(C_f)\geq \frac{d-1}{d}\log^+\|\mathbf{b}\|.\]
	If $f$ is PCF, and hence $\Delta_f(C_f)=0$, it follows that $\log^+\|\mathbf{b}\|=0$, and so the entries of $\mathbf{b}$ are integral. 
\end{proof}

\section{Global results}\label{sec:global}

We now change context so that $K$ is a field with a collection of inequivalent absolute values $M_K$ with weights $n_v$ such that the product formula holds for $\alpha\in K^*$, that is,
\begin{equation}\label{eq:prodfla}\sum_{v\in M_K}n_v\log|\alpha|_v=0.\end{equation}
Our main example is when $K$ is a number field, $M_K$ is the standard set of absolute values, and $n_v=[K_v:\QQ_v]/[K:\QQ]$. For any absolute value $|\cdot|_v$ on $K$ we may apply results from the previous section to an algebraic closure of a completion of an algebraic closure of $K$, with respect to $v$, and all quantities thereby obtained now acquire a subscript $v$.

For a divisor $D$ on $\PP^N_K$ defined by the vanishing of the homogeneous form $F(\mathbf{X})\in K[X_1, .., X_{N+1}]$, let
\[h(D)=\sum_{v\in M_K}n_v\log\|F\|_v,\]
that is, let the height of $D$ be the height of the tuple of coefficients as a point in the appropriate dual projective space. Note that, by~\eqref{eq:prodfla}, this definition is independent of the choice of form defining $D$, while~\eqref{eq:mahler} can be used to relate this height to the height used by Philippon~\cite{philippon}, which we used in~\cite{pnheights},  and then to that of Faltings~\cite{faltings} (see~\cite{soule}).

 Then we see immediately that for $D$ not containing $H$,
\[\sum_{v\in M_K}n_v\lambda_v(D)=h(D)-h(D|_H).\]
Writing $\hat{h}_f(D)$ for the canonical height of $D$ relative to $f$, so \[\hat{h}_f(D)=\lim_{k\to\infty} \frac{h(f_*^kD)}{d^{kN}},\] we then have
\[\sum_{v\in M_K}n_v\Delta_{f, v}(D)=\hat{h}_f(D)-\hat{h}_{f|_H}(D|_H)\]
(see also~\cite{pnheights, mincrit}).
Note that in~\cite{pnheights}, a different naive height was used on divisors, but since the heights differ by at most $O(\deg(D))$, the canonical height is the same (see Remark~\ref{rem:integrals}). Also note the one subtlety here, that $(f_*D)|_H = d(f|_H)_* D|_H$, so that $\hat{h}_{f|_H}((f_*D)|_H)=d^N\hat{h}_{f|_H}(D|_H)$, despite $\dim(H)=N-1$.

Note that since $f^*H=dH$, it follows readily that for $C_f$ as above, $C_f+(d-1)H$ is the ramification divisor of $f$, and $\hat{h}_{\mathrm{crit}}(f)=\hat{h}_f(C_f)$. In particular, we have
\[\sum_{v\in M_K}n_v\Delta_{f,v}(C_f)=\hat{h}_{\mathrm{crit}}(f)-\hat{h}_{\mathrm{crit}}(f|_H).\]

We may now proceed with the proofs of the global results.
\begin{proof}[Proof of Theorem~\ref{th:main}]
Let $K$ be a number field, and let $f$ be as in~\eqref{eq:form}, with coefficients in $K$. At each place $v$ of $K$, with subscripts denoting dependence on the corresponding absolute value, we have from Lemma~\ref{lem:localcritlower} that
\[\Delta_{f, v}(C_f)\geq \frac{d-1}{d}\log^+\|\mathbf{b}\|_v-\frac{1}{Nd}\lambda_v(A^{-1})-\xi_v(A)-\frac{d-1}{d}c_{9, v}.\]	
Summing over all places, we obtain the desired bound once we note that
\begin{gather*}
	\sum_{v\in M_K}n_v\log^+\|\mathbf{b}\|_v=h_{\PP^N}(\mathbf{b})\\
	\sum_{v\in M_K}n_v\lambda_v(A)=Nh_{\PGL_{N+1}}(A)+\log N!\\
	\sum_{v\in M_K}n_v\xi(A)=h_{\PGL_{N+1}}(A)+h_{\PGL_{N+1}}(A^{-1})+\log N\\
	\intertext{and}
	h_{\PGL_{N+1}}(A^{-1})\leq (N-1)h_{\PGL_{N+1}}(A)
	%	\sum_{v\in M_K}c_{9, v}\leq \frac{d}{N(d-1)}\log N!+\frac{N}{d^N-1}\log 2+\frac{3\log N}{d-1}+ \\	\frac{\frac{2(N-1)(d^N-1)}{1-d^{-1/2}}(d^{1/2}-1)+\sum_{p\leq d}\frac{\log p}{p-1}+(2dN+2N+3)\log 2+2(d^N+N-2)\log d}{d-1}.
\end{gather*}
while $\sum_{v\in M_K}n_vc_{9, v}$ is some explicit constant depending just on $N$ and $d$.
Note that this last upper bound contains the sum $\sum_{p\leq d}\frac{\log p}{p-1}$, a sum over primes, which can be explicitly bounded above in terms of $d$ using estimates of Rosser and Schoenfeld~\cite{rs}.

Similarly, the upper bound on $\hat{h}_{\mathrm{crit}}(f)-\hat{h}_{\mathrm{crit}}(f|_H)$ comes from summing the estimates in Lemma~\ref{lem:trivial} over all places.
\end{proof}

Note that the terms $\hat{h}_{\mathrm{crit}}(f)$ and $\hat{h}_{\mathrm{crit}}(f|_H)$ in Theorem~\ref{th:main} are independent of choice of coordinates, while the terms $h(\mathbf{b})$ and $h(A)$ are not. Indeed, it is possible to take $h(\mathbf{b})\to \infty$ within a conjugacy class, which might seem troubling at first for a lower bound on an invariant of the class, but note that this would result in the error term increasing as well.

\begin{proof}[Proof of Theorem~\ref{th:rigid}]
	Suppose our putative algebraic family is defined over the variety $V/k$, and let $K=k(V)$ be the function field, so that we may think of $f$ as a single map with coefficients in $K$. There exists a set $M_K$ of inequivalent absolute values such that the elements of height zero are precisely the constants (namely, we can take $M_K$ to be the collection of absolute values corresponding to vanishing of functions on irreducible divisors on any normal, projective variety $V'$ birational to $V$~\cite[Lemma~1.4.10, p.~12]{bg}).
	
	All of these absolute values are non-archimedean, and none are $p$-adic, and since $A$ is constant we have by Proposition~\ref{prop:goodred} that the entries of $\mathbf{b}$ are integral. In other words, given any irreducible divisor on $V'$, the functions $b_i$ do not have a pole along $V'$, and since the divisor was arbitrary, the $b_i$ are all constant.
\end{proof}

Finally, we note that the results in the previous sections allow for explicit estimates on the difference between the canonical height and the naive height of a divisor. Such results appear in~\cite{mincrit}, but here (and with regular polynomial endomorphisms in general) it seems to make more sense to think in terms of relative quantities.
\begin{prop}
	Let $D$ be an effective divisor on $\PP^N$, and $f$ as in~\eqref{eq:form}. Also, write
	\[h_{\mathrm{rel}}(D)=h(D)-h(D|H)\]
	and
	\[\hat{h}_{\mathrm{rel}, f}(D)=\hat{h}_f(D)-\hat{h}_{f|_H}h(D|H)\]
	Then
	\[\hat{h}_{\mathrm{rel}, f}(D)=h_{\mathrm{rel}}(D)+\deg(D)O_{d, N}(h_{\PGL_{N+1}}(L)+1).\]
\end{prop}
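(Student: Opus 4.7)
The plan is to sum the local estimate~\eqref{eq:deltaineq} from Lemma~\ref{lem:basicdelta} over all $v \in M_K$. Recall that at each place the lemma gives
\[|\Delta_{f,v}(D) - \lambda_v(D)| \leq \tfrac{\deg(D)}{d-1}\bigl(\log\|L\|_v - \log\|A\|_v + \lambda_v(L) + \log^+|4N(N+1)|_v + 4Nd\log^+|2|_v\bigr) + \tfrac{2N-1}{d^N-1}\log^+|2|_v,\]
where I have already used $\lambda_v(A)\leq \lambda_v(L)$ to pool the upper and lower bounds into a single absolute value bound.

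Next I would multiply by $n_v$ and sum over $v \in M_K$. By the identities recalled immediately before the proof of Theorem~\ref{th:main}, the left-hand side becomes exactly $|\hat{h}_{\mathrm{rel},f}(D) - h_{\mathrm{rel}}(D)|$. On the right each Gau\ss-norm sum collapses to a projective height: $\sum_v n_v \log\|L\|_v = h_{\PGL_{N+1}}(L)$, $\sum_v n_v \log\|A\|_v = h_{\PGL_{N+1}}(A)$, and $\sum_v n_v \lambda_v(L) = (N+1)h_{\PGL_{N+1}}(L) + \log((N+1)!)$. Each stray term $\log^+|m|_v$ with $m\in \ZZ^+$ is supported at the archimedean places and at those non-archimedean places of residue characteristic $\leq m$, and globally $\sum_v n_v \log^+|m|_v = h(m) = \log m$ by the product formula. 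All these contributions therefore bundle into explicit constants depending only on $d$ and $N$.

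Putting everything together produces a global inequality of the shape
\[|\hat{h}_{\mathrm{rel},f}(D) - h_{\mathrm{rel}}(D)| \leq \deg(D)\bigl(C_1\, h_{\PGL_{N+1}}(L) + C_2\, h_{\PGL_{N+1}}(A) + C_3\bigr) + C_4,\]
with all $C_i$ depending only on $d$ and $N$. Finally, since $A$ sits as an $N\times N$ block of $L$, we have $\log\|A\|_v \leq \log\|L\|_v$ at every place, hence $h_{\PGL_{N+1}}(A) \leq h_{\PGL_{N+1}}(L)$, so the $h(A)$-term can be absorbed into the $h(L)$-term. This yields the claimed form $\deg(D)\, O_{d,N}(h_{\PGL_{N+1}}(L) + 1)$ after noting that $C_4 \leq \deg(D)\cdot C_4$ since $\deg(D)\geq 1$ (or else both sides vanish).

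There is no real obstacle beyond careful bookkeeping: the conceptual work has already been done in Lemma~\ref{lem:basicdelta}, and the present proposition is simply its global shadow, obtained by invoking the product formula and recognizing the various local quantities as pieces of standard Weil heights. The only subtlety worth checking is that the finitely many ``bad'' places contributing to terms like $\log^+|2|_v$ really do assemble into a constant depending only on $d, N$, which they do by the product formula applied to the positive integer in question.
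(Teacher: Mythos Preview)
Your approach is exactly the paper's own (which simply says to sum \eqref{eq:deltaineq} over all places), and the argument is correct. Two small slips in the bookkeeping worth fixing: for $m\in\ZZ^+$ the term $\log^+|m|_v$ vanishes at \emph{all} non-archimedean places (since $|m|_v\leq 1$ there), so only the archimedean places contribute to $\sum_v n_v\log^+|m|_v=\log m$; and the $-\log\|A\|_v$ term sums to $-h(A)\leq 0$, so you simply drop it rather than absorb it via $h(A)\leq h(L)$ (the coefficient being negative, that inequality goes the wrong way).
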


\begin{proof}
Similar to the other results in this section, this is just a matter of summing~\eqref{eq:deltaineq} over all places.
\end{proof}

\section{The cases $d>N^2-N+1$ and $N=2$}\label{sec:p2}

Here we make a few remarks on cases in which the relative results in the introduction become absolute, largely by leveraging the results in~\cite{mincrit}.

\begin{prop}\label{prop:p2}
	Let $d>N^2-N+1$ or $N=2$. Then the PCF maps of the form~\eqref{eq:form} are a set up bounded height, up to conjugation.
\end{prop}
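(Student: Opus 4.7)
The plan is to reduce to a bounded-height statement for the restriction $g := f|_H$ to the hyperplane at infinity, which is a map of the form $A\mathbf{X}^d$ on $\PP^{N-1}$, and then use Theorem~\ref{th:main} to bring $\mathbf{b}$ into line once $A$ has been normalized. Since $f$ is PCF so is $g$, and under either of our hypotheses the conjugacy classes of PCF maps of the form $A\mathbf{X}^d$ on $\PP^{N-1}$ are already known to have bounded-height representatives: when $d > N^2 - N + 1$ this is the main theorem of~\cite{mincrit} applied in dimension $N-1$ (the degree condition being precisely what is needed there to force the critical height to be a moduli height for this restricted class in the ambient $\PP^{N-1}$); when $N = 2$ the restriction $g$ is a degree-$d$ rational map on $\PP^1$ and one appeals instead to the one-variable critical-height-is-moduli-height theorem of~\cite{hcrit}, together with the observation that maps of the form $A[X^d:Y^d]$ on $\PP^1$ are never flexible Latt\`{e}s.

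In either case we obtain a change of variables $\psi_0 \in \GL_N$, necessarily a monomial matrix (diagonal times permutation, those being the elements of $\PGL_N$ preserving the restricted form $A\mathbf{X}^d$ on $\PP^{N-1}$), such that the conjugate $\psi_0^{-1} g \psi_0 = \tilde{A}\mathbf{X}^d$ has $h(\tilde{A})$ bounded by some constant depending only on $N$ and $d$. Next, extend $\psi_0$ to the block-diagonal linear transformation $\Psi$ of $\PP^N$ that acts as $\psi_0$ on the first $N$ coordinates and as the identity on the last, so that $\Psi$ fixes the hyperplane $H$ setwise; conjugating $f$ by $\Psi$ then produces a new map $\tilde{f}(\mathbf{X}) = \tilde{A}\mathbf{X}^d + \tilde{\mathbf{b}}$ with the same $\tilde{A}$ and some $\tilde{\mathbf{b}}$ explicitly computable from $\psi_0$ and $\mathbf{b}$ (up to the rescaling of $\psi_0$ by a root of unity needed to keep $\tilde{A} \in \SL_N$). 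The conjugate $\tilde{f}$ is still PCF, so $\hat{h}_{\mathrm{crit}}(\tilde{f}) = \hat{h}_{\mathrm{crit}}(\tilde{f}|_H) = 0$, and the lower bound of Theorem~\ref{th:main} applied to $\tilde{f}$ then forces
\[
h(\tilde{\mathbf{b}}) \;\leq\; \frac{d}{d-1}\left(\frac{N(dN+1)-1}{Nd}\, h(\tilde{A}) + C_1\right) = O_{N,d}(1).
\]
Hence $(\tilde{A}, \tilde{\mathbf{b}})$ is a conjugacy representative of $f$ of bounded height, as required.

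The hard part is essentially bookkeeping rather than ideas: one must carefully identify the stabilizer of the form $A\mathbf{X}^d$ inside $\PGL_N$ (the monomial matrices), rescale $\psi_0$ by a suitable $N(d-1)$-th root of unity so that $\tilde{A}$ remains in $\SL_N$ without affecting heights, and take care in the case $N = 2$, where the degree hypothesis of~\cite{mincrit} excludes small $d$, that the one-variable moduli-height theorem does indeed cover all remaining degrees $d \geq 2$ for the specific class of maps $A[X^d:Y^d]$ on $\PP^1$.
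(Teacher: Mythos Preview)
Your proposal is correct and follows essentially the same approach as the paper: reduce to the restriction $g=f|_H$, invoke a bounded-height result for PCF minimally critical maps on $\PP^{N-1}$, extend the normalizing change of variables to $\PP^N$, and then apply Theorem~\ref{th:main} to bound $h(\mathbf{b})$.

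Two small points of comparison. First, for $N=2$ the paper cites \cite{bijl} rather than \cite{hcrit}; either works to bound the moduli height of the PCF bicritical map $g$. Second, and more substantively, the paper explicitly flags a step you leave implicit: once one knows $g$ has bounded \emph{moduli} height in $\mathsf{M}_d^{N-1}$, it is not automatic that $g$ is conjugate \emph{within the class} $B\mathbf{X}^d$ to a representative with $h(B)$ bounded. The general moduli-height bound only guarantees some conjugate of bounded height, not one of this special shape; and your observation that the stabilizer of the form in $\PGL_N$ consists of monomial matrices tells you which conjugations preserve the shape, but does not by itself show that the map from $A\mapsto$ (class of $A\mathbf{X}^d$) has bounded-height sections over bounded sets. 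The paper handles this via \cite[Lemma~15]{mincrit} and \cite[Lemma~6.32]{barbados}. Your ``hard part'' paragraph gestures at this issue but does not actually resolve it; you should either cite those lemmas or sketch the argument that bounded moduli height for bicritical maps on $\PP^1$ yields a bounded-height $B\in\PGL_2$.
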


\begin{proof}
	If $f$ of the form~\eqref{eq:form} is PCF, then so is $f|_H$, which is a minimally critical endomorphism in the sense of~\cite{mincrit}. If $d>N^2-N+1=(N-1)^2+(N-1)+1$, then the main result of~\cite{mincrit} shows that $f|_H$ is conjugate to a map of the form $B\mathbf{X}^d$ with $h_{\PGL_{N}}(B)$ bounded in terms of $d$ and $N$. We can extend this change of coordinates to $\PP^N$ and choose a lift of $B$ to $\SL_N$, and thereby replace $f$ by a map $f(\mathbf{X})=A\mathbf{X}^d+\mathbf{b}$ with $h(A)$ bounded. But now Theorem~\ref{th:main} gives us that $h(\mathbf{b})$ is bounded as well (in terms of $d$ and $N$).
	
	In the case $N=2$ we may extend this to $d=2, 3$ by the main result of~\cite{bijl}. Here, $f|_H$ is a minimally critical (bicritical) endomorphism of $\PP^1$ which is PCF, and hence has bounded moduli height. It is not \emph{a priori} obvious that this map will be conjugate to something of the form $B\mathbf{X}^d$ with $B\in \PGL_2$ of bounded height, but this follows from~\cite[Lemma~15]{mincrit} and~\cite[Lemma~6.32, p.~102]{barbados}. The rest of the argument is now the same
\end{proof}

\begin{remark}
There are, of course, a bevy of examples of PCF endomorphisms of the form~\eqref{eq:form} with $\mathbf{b}=\mathbf{0}$, but we expect examples with $\mathbf{b}\neq \mathbf{0}$ to be quite rare. As such, it would be interesting to compute exhaustive lists of examples defined over $\QQ$, say, in the case $N=2$, which brings us to the question of how explicit the bounds in Proposition~\ref{prop:p2} can be made.

The bounds for $h(\mathbf{b})$ can be made completely explicit,  in terms of $h(A)$, by a careful tracing through the proof of Theorem~\ref{th:main}.   In the case $d=2$, bounds for $h(A)$ are made concrete in~\cite{bijl}, and so the exhaustive list  implied by Proposition~\ref{prop:p2} could actually be computed (but not easily). In the case $d\geq 3$, the results in~\cite{bijl} do not imply anything quite so explicit, but we note that a more direct argument gives effective constants when $d\geq 4$ (see~\cite{mincrit}).
\end{remark}

\begin{prop}
	Let $d>N^2-N+1$ or $N=2$. Then there are no algebraic families of PCF maps of the form~\eqref{eq:form} over $\CC$.
\end{prop}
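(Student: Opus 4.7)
The plan is to combine the rigidity of the restricted family $\{f_v|_H\}$ (from~\cite{mincrit} for $d\geq N^2-N+1$, and from~\cite{bijl} for $N=2$) with Theorem~\ref{th:rigid}, which already rules out non-constant $\mathbf{b}$ above a constant $A$. Suppose toward contradiction that $\{f_v\}_{v\in V}$ is a non-isotrivial algebraic family of PCF maps of the form~\eqref{eq:form} over an irreducible complex variety $V$. Restricting to $H$ yields an algebraic family $\{f_v|_H\}$ of PCF minimally critical endomorphisms $A_v\mathbf{X}^d$ of $\PP^{N-1}$. In our range, the critical height on $\mathsf{M}_d^{N-1}$ for maps of this shape is comparable to a Weil moduli height (by~\cite{mincrit} applied in dimension $N-1$, which needs $d\geq (N-1)^2+(N-1)+1=N^2-N+1$, or by~\cite{bijl} when $N=2$), so the vanishing of the critical heights of the $f_v|_H$ forces the induced moduli map $V\to\mathsf{M}_d^{N-1}$ to have image of bounded Weil height. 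Since the moduli height is ample, this image cannot be positive-dimensional, and hence by irreducibility $\{f_v|_H\}$ is isotrivial.

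Next I would straighten the isotriviality of $\{A_v\mathbf{X}^d\}$ so as to respect the form~\eqref{eq:form}. The locus of endomorphisms of $\PP^{N-1}$ of the shape $A\mathbf{X}^d$ is stabilized under $\PGL_N$-conjugation only by the monomial subgroup (diagonal times permutation, modulo scalars), as this is precisely the subgroup commuting with coordinate-wise $d$th powering. Consequently, after a finite base change of $V$, the conjugations realizing isotriviality may be chosen to be monomial in $\PGL_N$. Each such monomial element extends block-diagonally to a monomial element of $\PGL_{N+1}$ fixing $H$, and such extensions preserve the form~\eqref{eq:form}, sending $A\mathbf{X}^d+\mathbf{b}$ to some $A'\mathbf{X}^d+\mathbf{b}'$. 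Applying these conjugations, we may replace the family by one of the form $f_v(\mathbf{X})=A\mathbf{X}^d+\mathbf{b}_v$ with $A$ constant. Theorem~\ref{th:rigid} then forces $\mathbf{b}_v$ to be constant as well, contradicting non-isotriviality.

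The step I expect to require the most care is the straightening in the second paragraph: namely, upgrading moduli-isotriviality of $\{A_v\mathbf{X}^d\}$ to a family of genuinely monomial conjugations after a finite cover, as opposed to arbitrary $\PGL_N$-valued ones. This ought to follow from the observation that any $C\in\PGL_N$ carrying one map of the shape $A\mathbf{X}^d$ to another differs from a monomial element only by an element of the (finite modulo the monomial subgroup) stabilizer of such a map, together with a standard descent argument over a finite cover. Once that is set up, the reduction to Theorem~\ref{th:rigid} is immediate.
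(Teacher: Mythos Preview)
Your proposal follows essentially the same strategy as the paper: restrict to $H$, show the resulting family on $\PP^{N-1}$ is isotrivial (the paper simply cites \cite[Theorem~3]{mincrit} for $d>N^2-N+1$ and Thurston's rigidity for $N=2$, rather than rederiving isotriviality from a height comparison as you do), extend the trivializing conjugations to $\PP^N$ after a finite base change so that $A$ becomes constant, and then invoke Theorem~\ref{th:rigid}. Two small remarks: for $N=2$, $d\in\{2,3\}$ the paper appeals to Thurston rather than \cite{bijl}, so if you keep your height-based route you should make sure the relevant local estimates from \cite{bijl} are being applied over the function field $\CC(V)$; and your phrase ``image of bounded Weil height'' should really be read as ``the point $[f|_H]\in\mathsf{M}_d^{N-1}(\CC(V))$ has moduli height zero,'' since over $\CC$ itself there is no height to bound.
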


\begin{proof}\label{prop:p2c}
	If $d>N^2-N+1$, then again the results of~\cite{mincrit} apply to the restriction $f|_H$. So if $f$ is a PCF family, then $f|_H$ is also a PCF family, and by~\cite[Theorem~3]{mincrit} must be isotrivial. Extending this change of coordinates to $\PP^N$, we may replace $f$ by a conjugate family (perhaps after a finite extension of the function field) so that $f|_H$ is constant. Theorem~\ref{th:rigid} now applies to show that all coefficients of $f$ are constant. If $N=2$ the argument is the same, except now in the case $d=2, 3$ we must use Thurston's result to conclude that the family $f|_H$ of PCF endomorphisms of $\PP^1$ is isotrivial.
\end{proof}

In positive characteristic, we may still apply the results of~\cite{mincrit} and prove a version of Proposition~\ref{prop:p2c} when $d>N^2-N+1$. In the case $N=2$, $d=3$ we are out of luck, but the remaining case $N=d=2$ can be treated in odd characteristic by the exact same proof, and an appeal to the rigidity of PCF quadratric endomorphisms of $\PP^1$ in odd characteristic~\cite{bijl}.

\begin{prop}\label{th:p2rigid}
In characteristic $p\neq 2$, there are no algebraic families of quadratic PCF maps $f:\PP^2\to\PP^2$ of the form~\eqref{eq:form}.
\end{prop}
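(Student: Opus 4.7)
The strategy is to mimic the proof of Proposition~\ref{prop:p2c} with two substitutions appropriate to the positive-characteristic setting: in place of Thurston's rigidity theorem for quadratic PCF rational maps on $\PP^1$ (valid over $\CC$), invoke the odd-characteristic rigidity theorem from~\cite{bijl}; and note that the hypothesis $p \neq 2$ is equivalent to $p > d = 2$, which is exactly the condition needed to apply Theorem~\ref{th:rigid} at the end.

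Concretely, suppose for contradiction that $\{f_t = A_t\mathbf{X}^2 + \mathbf{b}_t\}$ is a non-isotrivial algebraic family of PCF maps over some irreducible base $V/k$. The restriction to the hyperplane at infinity yields an algebraic family $\{f_t|_H = A_t\mathbf{X}^2\}$ of PCF quadratic endomorphisms of $\PP^1$. By~\cite{bijl} this family is isotrivial, so after passing to a finite extension of the function field $k(V)$ and conjugating by a lift $\tilde{M}_t = \begin{pmatrix} M_t & 0 \\ 0 & 1\end{pmatrix}$ of the trivializing $\PGL_2$-family, we may assume that $f|_H$ is constant while the family remains of the form~\eqref{eq:form}. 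Now $A$ is constant, and Theorem~\ref{th:rigid} (applicable since $p > d$) forces $\mathbf{b}$ to be constant as well, contradicting non-isotriviality.

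The one genuinely delicate step, glossed over in Proposition~\ref{prop:p2c} as well, is this conjugation-and-renormalization: one must verify that after conjugating by $\tilde{M}_t$ on $\PP^2$, the resulting family can still be brought back into the shape~\eqref{eq:form}, perhaps after a further monomial change of coordinates that moves the critical hyperplanes back onto the coordinate axes. Granting this normalization, the rest of the proof is a clean reduction to the two black-box inputs~\cite{bijl} and Theorem~\ref{th:rigid}; as the author himself emphasizes in the paragraph preceding the statement, this is literally "the exact same proof" as in Proposition~\ref{prop:p2c}, with the odd-characteristic rigidity standing in for Thurston's theorem.
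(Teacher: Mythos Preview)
Your proposal is correct and follows exactly the approach the paper indicates: the paper does not write out a separate proof for this proposition, but explicitly says (in the paragraph preceding the statement) that the case $N=d=2$ in odd characteristic is handled by ``the exact same proof'' as Proposition~\ref{prop:p2c}, with the rigidity of PCF quadratic endomorphisms of $\PP^1$ in odd characteristic from~\cite{bijl} replacing Thurston's theorem. Your write-up faithfully carries this out, and your flagging of the conjugation-and-renormalization step as delicate is fair---this is glossed over in the paper's proof of Proposition~\ref{prop:p2c} as well.
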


\end{document}